\definecolor{dark-red}{rgb}{0.5,0.0,0.0}
\definecolor{dark-blue}{rgb}{0.0,0.0,0.4}
\definecolor{dark-green}{rgb}{0,0.4,0}
\newcommand{\R}{\Bbb{R}}
\newcommand{\C}{\Bbb{C}}
\newcommand{\U}{\mathrm{U}}
\newcommand{\bbm}{\mathbbm}
\newcommand{\Q}{\mathbb{Q}}
\newcommand{\E}{\mathcal{E}}
\newcommand{\J}{\mathcal{J}}
\DeclareMathOperator{\colim}{\mathrm{colim}}
\DeclareMathOperator{\hocolim}{\mathrm{hocolim}}
\DeclareMathOperator{\holim}{\mathrm{holim}}
\DeclareMathOperator{\hofibre}{\mathrm{hofibre}}
\newcommand{\bigslant}[2]{{\raisebox{.2em}{$#1$}\left/\raisebox{-.2em}{$#2$}\right.}}
\DeclareMathOperator{\homog}{--homog--}
\DeclareMathOperator{\poly}{--poly--}
\newcommand{\s}{\mathsf{Sp}}
\DeclareMathOperator{\T}{\mathsf{Top}_\ast}
\DeclareMathOperator{\Hom}{\mathrm{Hom}}
\newcommand{\cofrep}{\widehat{c}}
\newcommand{\Ev}{\mathrm{Ev}}
\newcommand{\fibrep}{\widehat{f}}
\DeclareMathOperator{\id}{\mathrm{Id}}
\DeclareMathOperator{\ind}{\mathrm{ind}}
\DeclareMathOperator{\res}{\mathrm{res}}
\DeclareMathOperator{\BU}{\mathrm{BU}}
\renewcommand{\O}{\mathrm{O}}
\newcommand{\CJ}{\J^{\mathbf{R}}}
\newcommand{\CI}{\mathbf{R}}
\newcommand{\Cgamma}{\gamma^\mathbf{R}}
\newcommand{\CE}{C_2 \ltimes \E^\mathbf{R}}
\newcommand{\os}{\mathsf{Sp}^\mathbf{O}}
\newcommand{\us}{\mathsf{Sp}^\mathbf{U}}
\theoremstyle{definition}
\newtheorem{thm}{Theorem}[section]
\newtheorem{prop}[thm]{Proposition}
\newtheorem{lem}[thm]{Lemma}
\newtheorem{cor}[thm]{Corollary}
\newtheorem{ex}[thm]{Example}
\newtheorem{definition}[thm]{Definition}
\newtheorem{rem}[thm]{Remark}
\newtheorem{alphtheorem}{Theorem}[section]
\newtheorem{alphprop}[alphtheorem]{Proposition}
\newcommand*{\centerfloat}{%
  \parindent \z@
  \leftskip \z@ \@plus 1fil \@minus \textwidth
  \rightskip\leftskip
  \parfillskip \z@skip}
  \newcommand{\adjunction}[4]{
\xymatrix{
#1:#2 \ar@<0.7ex>[r] &
\ar@<0.7ex>[l] #3:#4
}}
\begin{document}

\title{Unitary Functor Calculus with Reality}
\author{Niall Taggart}
\address{Max Planck Institute for Mathematics, Vivatsgasse 7, 53111 Bonn, Germany}
\email{ntaggart@mpim-bonn.mpg.de}
\thanks{The author wishes to thank David Barnes for numerous helpful and insightful conversations and suggestions on this material.}
\date{\today}
\subjclass[2010]{Primary: 55P65. Secondary: 55P42, 55P91, 55U35}
\keywords{Functor calculus, orthogonal calculus, unitary calculus, $G$--spectra, Real spectra}
\maketitle

\begin{abstract}
We construct a calculus of functors in the spirit of orthogonal calculus, which is designed to study ``functors with reality'' such as the Real classifying space functor, $\BU_\R(-)$. The calculus produces a Taylor tower, the $n$--th layer of which is classified by a spectrum with an action of $C_2 \ltimes \U(n)$. 

We further give model categorical considerations, producing a zig--zag of Quillen equivalences between spectra with an action of $C_2 \ltimes \U(n)$ and a model structure on the category of input functors which captures the homotopy theory of the $n$--th layer of the Taylor tower. 
\end{abstract}

\setcounter{tocdepth}{1}
{\hypersetup{linkcolor=black} \tableofcontents}

\section*{Introduction}

The orthogonal and unitary calculi \cite{We95,Ta19} systematically study $\J$--spaces where $\J$ is the category of finite--dimensional real inner product spaces or complex inner product spaces, respectively. The key idea behind these calculi is to approximate a given functor by a sequence of polynomial functors, similar to Taylor's series from differential calculus. A natural question to ask is: \emph{what can we say when the $\J$--spaces come with symmetry in the form of a group action?} For Goodwillie calculus, this equivariance has been studied by Dotto \cite{Do16, Do16b, Do17}, and Dotto and Moi \cite{DM16}. An initial step in a much larger project to understand equivariant orthogonal calculus is the following calculus with reality. This is unitary calculus, constructed to take into account the $C_2$--action on the category of complex inner product spaces given by complex conjugation. In \cite{Ta19,Ta20}, we explained the strong analogy of orthogonal and unitary calculi with real and complex topological $K$--theory. In fact, this analogy was the motivation behind the comparisons of \cite{Ta20}. The calculus with reality considered here fits into this analogy, taking the place of $K$--theory with reality, due to Atiyah \cite{At66}, hence the `with reality' appellation.

The idea is as in the orthogonal and unitary calculus, but suitably altered to take into account the precise equivariance which arises from complex conjugation. We define the notion of a polynomial functor with reality, and construct a polynomial approximation functor which is given as the fibrant replacement in a suitable model structure on the category of functors with reality, see Proposition \ref{prop: n--poly model structure}. 

Using localisation techniques we produce a model structure on the category of functors with reality, which captures the homotopy theory of $n$--homogeneous functors, in particular, the $n$-th layer of the Taylor tower, see Proposition \ref{prop: homogeneous model structure with reality}. We thus produce a zig--zag of Quillen equivalences
\[
\xymatrix@C+1cm{
n \homog \CE_0
\ar@<-1 ex>[rr]_(0.6){\ind_0^n \varepsilon^*}
&&
C_2 \ltimes \U(n)\E_n^\mathbf{R}
\ar@<-1ex>[ll]_(0.4){\res_0^n/\U(n)}
\ar[]!<2ex,1ex>;[ddll]!<-3ex,1ex>_{(\xi_n)_!}
 \\
 &&
 \\
\CE_1[\U(n)]
\ar[]!<-2ex,-1ex>;[uurr]!<3ex,-1ex>_{(\xi_n)^*}
\ar@<-1ex>[rr]_{\psi}
&&
\s^\mathbf{O}[C_2 \ltimes \U(n)]
\ar@<-1ex>[ll]_{L_\psi}
}
\]
in Theorems \ref{thm: intermediate cat as E_1}, \ref{thm: E_1 as OS}, and \ref{thm: QE of E_0 and E_n}, which allows for one to characterise $n$--homogeneous functors as (orthogonal) spectra with an action of $C_2 \ltimes \U(n)$. The end result is a Taylor tower in which the $n$--th layer is classified by these spectra with an action of $C_2 \ltimes \U(n)$. 

The situation is noticeably different to the orthogonal and unitary calculus, it is convenient to have an extra step in the zig--zag of Quillen equivalences. The extra step indicates a sensitivity of the calculus to any introduction of extra equivariance. This in turn will lead to more subtle calculations, for example, an enhancement of \cite{Ar02} to the consideration of the Real classifying space of the unitary group functor, $\BU_\R(-)$. 

In detail, in the orthogonal calculus, Barnes and Oman \cite{BO13} constructed a zig--zag of Quillen equivalences using only one intermediate step between their $n$--homogeneous model structure and orthogonal spectra with an action of $\O(n)$. In the unitary calculus, the author \cite{Ta19} first gave a zig--zag between the unitary $n$--homogeneous model structure and unitary spectra with an action of $\U(n)$, and then provided a further Quillen equivalence between unitary spectra with an action of $\U(n)$ and orthogonal spectra with an action of $\U(n)$. This extra step could be composed into the Quillen equivalence between the unitary intermediate category and unitary spectra with an action of $\U(n)$, \cite[Theorem 6.8]{Ta19}, since left (resp. right) Quillen functors compose to give left (resp. right) Quillen functors. In the `with reality' setting, none of the Quillen equivalences may be composed to reduce the length of the zig--zag, as it would require composing left Quillen functors with right Quillen functors, which are neither left nor right Quillen in general.

As an interesting aside, we further strengthen the idea of calculus with reality being in analogy with the $KR$--theory of Atiyah, by giving an equivalence of categories between the category $\CE_1$ which features in our zig-zag and  the Real spectra (see Definition \ref{def: real spectra}) of Schwede, see Proposition \ref{prop: real spectra and E1}. Combining such an equivalence of categories with our zig--zag of Quillen equivalences indicates that the homotopy theory of $n$--homogeneous functors with reality is equivalent to the homotopy theory of Real spectra with an action of $\U(n)$.

\subsection*{Main Results and Organisation}

In Section \ref{section: equivariant stiefel combinatorics}, we establish the Real version of the Stiefel combinatorics of Weiss \cite[Section 1]{We95}. These Real Stiefel combinatorics are crucial to constructing the derivatives of a functor with reality and understanding the homotopy theory of polynomial functors. In particular, we verify the `crucial' result for the existence of the calculus in Proposition \ref{hocolim sphere homeo}. 

\begin{alphprop}
The sphere bundle $S\Cgamma_{n+1}(V,W)$ is $C_2$--homeomorphic to
\[
\underset{0 \neq U \subset \C^{n+1}}{\hocolim}~\CJ(U \oplus V, W).
\]
\end{alphprop}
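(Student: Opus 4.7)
The plan is to adapt the proof of the analogous result in Weiss's orthogonal calculus \cite{We95} (and the author's unitary version \cite{Ta19}), carrying the $C_2$-action by complex conjugation throughout the argument.

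First I would unpack $\Cgamma_{n+1}(V,W)$ as the vector bundle over $\CJ(V,W)$ whose fibre above a $\C$-linear isometry $f : V \to W$ is $\C^{n+1} \otimes_\C \coker(f)$, with the $C_2$-action sending $(f,x)$ to $(\bar f, \bar x)$, where complex conjugation acts on both $\C^{n+1}$ and on $\coker(f)$. The unit sphere bundle $S\Cgamma_{n+1}(V,W)$ is then the subspace cut out by $\|x\| = 1$, which inherits this $C_2$-action.

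Next I would construct the comparison map. To a point $(f,x) \in S\Cgamma_{n+1}(V,W)$ one associates the minimal $\C$-linear subspace $U = U(f,x) \subseteq \C^{n+1}$ with $x \in U \otimes_\C \coker(f)$; equivalently, $U$ is the image of the $\C$-linear map adjoint to $x$. Then $x$ determines an injection $\alpha_x : U \to \coker(f)$, which lifts to an isometric embedding into $W$ orthogonal to $f(V)$; together with $f$ this produces an isometry $U \oplus V \to W$, i.e.\ a point of $\CJ(U \oplus V, W)$. Filtering $S\Cgamma_{n+1}(V,W)$ by $\dim_\C U$ organises the space as a homotopy colimit over the poset of nonzero subspaces of $\C^{n+1}$, exactly as in the non-equivariant case.

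To conclude I would verify $C_2$-equivariance. Complex conjugation sends $(f,x)$ to $(\bar f, \bar x)$, and the assigned subspace transforms as $U(\bar f, \bar x) = \overline{U(f,x)}$, which is compatible with the $C_2$-action on the poset of nonzero subspaces of $\C^{n+1}$ induced by conjugation. The main obstacle is checking that the comparison map respects not only conjugation on the underlying spaces but also the induced reshuffling of the indexing category of the homotopy colimit, so that the resulting homeomorphism is genuinely a $C_2$-map. This ultimately reduces to naturality of the assignment $(f,x) \mapsto (U, f \oplus \alpha_x)$ under complex conjugation, which holds by construction since every step of the definition is canonical.
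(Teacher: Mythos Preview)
Your overall strategy---reduce to the non-equivariant homeomorphism and then verify $C_2$-equivariance---is exactly what the paper does. The gap is in your description of the underlying non-equivariant map: what you sketch is not the homeomorphism of \cite{We95} or \cite{Ta19}, and as written it does not produce a point of the homotopy colimit.

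A point of $\hocolim_{0\neq U\subset\C^{n+1}}\CJ(U\oplus V,W)$ is not determined by a single subspace $U$ together with an isometry $U\oplus V\to W$. The homotopy colimit is the realisation of a simplicial space, so a point is a flag $U_0\subset U_1\subset\cdots\subset U_k$ in $\C^{n+1}$, a point of $\CJ(U_0\oplus V,W)$, and barycentric coordinates in $\Delta^k$. Your assignment $(f,x)\mapsto(U(f,x),\,f\oplus\alpha_x)$ only records the ``top'' subspace (the support of $x$) and forgets the simplex direction entirely. Moreover, $x$ restricted to $U(f,x)$ is injective but almost never isometric, so $\alpha_x$ as you describe it is not a linear isometry and does not land in $\CJ(U\oplus V,W)$.

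The missing ingredient, and the heart of the construction in \cite{We95,Ta19} that the paper reproduces, is the spectral decomposition of the self-adjoint operator $x^*x:\C^{n+1}\to\C^{n+1}$ (identifying $x\in\C^{n+1}\otimes f(V)^\perp$ with a map $\C^{n+1}\to f(V)^\perp$). Its distinct positive eigenvalues $\lambda_0<\cdots<\lambda_k$ supply the barycentric coordinates, the nested sums of eigenspaces $E(\lambda_0)\subset E(\lambda_0)\oplus E(\lambda_1)\subset\cdots$ supply the flag, and the rescaled maps $\lambda_i^{-1/2}x|_{E(\lambda_i)}$ (which \emph{are} isometric) assemble to the required isometry $z$. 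The $C_2$-equivariance check in the paper then amounts to the observation that $(g\cdot x)^*(g\cdot x)=g(x^*x)g$ has the same eigenvalues as $x^*x$ with conjugate eigenspaces $\overline{E(\lambda_i)}$, so the flag, the isometry, and the barycentric coordinates all transform correctly. Your intuition that the subspace gets conjugated is right, but you need the full eigendata to make the map well-defined in the first place.
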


We define polynomial functors, the polynomial approximation functor and give a description of a model category which captures the homotopy theory of $n$--polynomial functors in Section \ref{section: poly with reality}. This model structure is given as a left Bousfield localisation of the projective model structure on the category of functors with reality. 

We turn our attention to $n$--homogeneous functors in Section \ref{section: homogeneous functors}, the main example being the $n$--th layer of the Taylor tower. These are functors which are $n$-polynomial and have trivial $(n-1)$--polynomial approximation. We further give a model structure on the category of functors with reality which captures the homotopy theory of $n$--homogeneous functors. This model structure is a right Bousfield localisation of the $n$--polynomial model structure. We achieve a Taylor tower of the following form
\[
\xymatrix@C+1cm{
		&			&	\ar@/_1pc/[dl]	F \ar[d]	  \ar@/^1pc/[drr]  \ar@/^1.3pc/[drrr]   &			     	&			& \\
 \cdots \ar[r] & T_{n+1}F \ar[r]_{r_{n+1}} & T_nF \ar[r]_{r_n} & \cdots \ar[r]_{r_2} & T_1F \ar[r]_{r_1} & T_0F \\
 & D_{n+1}F \ar[u] & D_nF \ar[u] & & D_1F \ar[u] &\\
}
\]
with the layers of the Taylor tower being cofibrant--fibrant in the $n$--homogeneous model structure. 

In Section \ref{section: derivative}, we define the derivative of a functor and show that the $(n+1)$--st derivative of an $n$--polynomial functor with reality is trivial. We further construct the intermediate categories, which are the natural home to the derivatives, and produce a stable model structure on the $n$--th level intermediate category.

In Section \ref{section: intermediate category as spectra}, we give two out of the three steps in our zig--zag of Quillen equivalences by demonstrating a zig--zag of Quillen equivalences between the intermediate category and the category of spectra with an action of $C_2 \ltimes \U(n)$. The following results are given as Theorem \ref{thm: intermediate cat as E_1} and Theorem \ref{thm: E_1 as OS}, respectively. 

\begin{alphtheorem}
The adjoint pair
\[
\adjunction{(\xi_n)_!}{C_2 \ltimes \U(n)\E_n^\mathbf{R}}{\CE_1[\U(n)]}{(\xi_n)^*}
\]
is a Quillen equivalence when both categories are equipped with their stable model structures.
\end{alphtheorem}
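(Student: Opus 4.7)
The plan is to mirror the structure of the analogous Quillen equivalence in the unitary calculus \cite[Theorem 6.8]{Ta19}, adapted to track the extra $C_2$-equivariance coming from complex conjugation. I would first verify that $(\xi_n)_! \dashv (\xi_n)^*$ is a Quillen adjunction, and then upgrade this to a Quillen equivalence via a derived unit argument.

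For the Quillen adjunction, both stable model structures arise by Bousfield localisation from projective/level model structures, so it is enough to establish the adjunction at the projective level and then verify that $(\xi_n)_!$ sends the stabilising maps of $\CE_1[\U(n)]$ to stable weak equivalences in $C_2 \ltimes \U(n)\E_n^\mathbf{R}$. The right adjoint $(\xi_n)^*$ is precomposition with a functor between (enriched) indexing categories, and so preserves the levelwise-defined fibrations and trivial fibrations directly from the definitions; the left adjoint $(\xi_n)_!$ is the enriched left Kan extension. Descent to the localised model structures then reduces to computing the image of a generating set of stabilising maps under $(\xi_n)_!$, which is a direct calculation from the coend formula.

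For the Quillen equivalence I would use the standard criterion: show that $(\xi_n)^*$ reflects weak equivalences between stably fibrant objects, and that the derived unit $X \to (\xi_n)^* \mathbf{R}(\xi_n)_! X$ is a weak equivalence for cofibrant $X$. Reflection reduces to a bookkeeping argument on stable homotopy groups, exploiting the fact that the $\U(n)$-action on objects of $\CE_1[\U(n)]$ encodes precisely the $n$-level data present on the other side of the adjunction. The derived unit computation is the crux: after taking a cofibrant representative of $X$, one computes $(\xi_n)^*(\xi_n)_! X$ levelwise via the Kan extension formula and compares it with $X$ using the Real Stiefel combinatorics of Section \ref{section: equivariant stiefel combinatorics}, particularly the decomposition from Proposition \ref{hocolim sphere homeo}.

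The main obstacle will be the correct handling of the $C_2 \ltimes \U(n)$-equivariance throughout. Unlike the purely unitary setting of \cite{Ta19}, where this step could be composed with subsequent ones to shorten the zig--zag, the mixed variance of the functors prevents such telescoping here, so every equivariance check must be carried out in its strongest form. In particular, I expect the technical heart of the argument to lie in ensuring that the derived unit is a genuine $C_2 \ltimes \U(n)$-equivariant weak equivalence, rather than merely an equivalence after forgetting part of the equivariance, and in verifying that the $C_2$-action from reality and the $\U(n)$-action interact with the stabilising maps in a fully compatible way.
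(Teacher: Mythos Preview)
Your overall architecture matches the paper's: both follow \cite[Theorem 6.8]{Ta19}, establish the Quillen adjunction first, then show the right adjoint reflects weak equivalences and the derived unit is an equivalence. Your route to the Quillen adjunction via the left adjoint and stabilising maps is a legitimate alternative to the paper's route via the right adjoint preserving acyclic fibrations and fibrant objects.

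There is, however, a misdirection in your plan for the derived unit. You propose to compute $(\xi_n)^*(\xi_n)_! X$ for a general cofibrant $X$ using the Real Stiefel combinatorics of Section~\ref{section: equivariant stiefel combinatorics}, specifically Proposition~\ref{hocolim sphere homeo}. That proposition identifies the sphere bundle $S\Cgamma_{n+1}(V,W)$ as a homotopy colimit and is the engine behind polynomial approximation and the fibre sequence for derivatives; it plays no role in comparing $\CJ_1(nU,nV)$ with $\CJ_n(U,V)$, which is what the unit computation actually requires. The paper instead exploits that $C_2 \ltimes \U(n)\E_n^\mathbf{R}$ is homotopically compactly generated by $(C_2 \ltimes \U(n))_+ \wedge n\mathbb{S}$, so the derived unit need only be checked on this single object, where it is a direct coend calculation. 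Without this reduction, your proposed levelwise computation for arbitrary cofibrant $X$ would be substantially harder, and the tool you cite would not supply the needed identification.

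Finally, your emphasis on verifying that the derived unit is a ``genuine $C_2 \ltimes \U(n)$-equivariant weak equivalence'' is overstated. Both stable model structures have \emph{underlying} $\pi_*$-isomorphisms as weak equivalences; the equivariance enters only in checking that $(\xi_n)_!$ and $(\xi_n)^*$ are well-defined enriched functors, which the paper handles before the theorem. Once that is done, all homotopical checks are non-equivariant, and the cofinality argument for reflection (which the paper uses for all objects, not just fibrant ones) goes through exactly as in the unitary case.
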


\begin{alphtheorem}
The adjoint pair
\[
\adjunction{L_{\psi}}{\os[C_2 \ltimes \U(n)]}{\CE_1[\U(n)]}{\psi}
\]
is a Quillen equivalence when both categories are equipped with their stable model structures.
\end{alphtheorem}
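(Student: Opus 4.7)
The plan is to establish that the adjunction is Quillen, and then verify that the derived counit is a weak equivalence, exploiting the stability of both model structures to reduce the check to a set of generators.

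First, one shows that $\psi$ is a right Quillen functor. The fibrations and acyclic fibrations in the stable model structure on $\os[C_2 \ltimes \U(n)]$ are detected on the underlying projective-level structure of $C_2 \ltimes \U(n)$-equivariant orthogonal spectra, and analogously for $\CE_1[\U(n)]$. Since $\psi$ is built (in analogy with the comparison of \cite{Ta19}) as a restriction along an inclusion of indexing categories, it preserves levelwise equivariant fibrations and acyclic fibrations automatically. Compatibility with the stable Bousfield localisations then reduces to verifying that $\psi$ sends $\Omega$-spectra to $\Omega$-spectra, which can be read off from the description of the adjoint structure maps of $\psi(Y)$.

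Second, one shows the derived counit $L_\psi \psi Y \to Y$ is a stable weak equivalence for fibrant $Y$. Because both model categories are stable, it suffices to prove this induces isomorphisms on the appropriate equivariant stable homotopy groups. A standard triangulated argument permits reducing this check to a generating set: the free $C_2 \ltimes \U(n)$-cells together with their suspensions. On each such generator both $L_\psi$ and $\psi$ can be computed explicitly, with the Real Stiefel combinatorics of Section \ref{section: equivariant stiefel combinatorics} controlling the indexed spaces that appear. The non-equivariant ($\U(n) = 1$) benchmark for this computation is provided by the equivalence of $\CE_1$ with Schwede's Real spectra (Proposition \ref{prop: real spectra and E1}), together with the well-known passage between Real spectra and genuine $C_2$-orthogonal spectra.

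The main obstacle will be handling the $C_2$- and $\U(n)$-equivariance simultaneously. As emphasised in the introduction, the corresponding Quillen equivalences of \cite{BO13} and \cite{Ta19} cannot be composed through an intermediate category here, since doing so would require composing a left Quillen functor with a right Quillen functor. This means both the $\Omega$-spectrum condition in step one and the computation on generators in step two must carry the full $C_2 \ltimes \U(n)$-equivariance throughout, rather than being decomposed into successive checks against each group in turn. Carefully tracking how the complex-conjugation action interacts with the unitary action on the indexing Stiefel manifolds is where I expect the bulk of the technical work to lie.
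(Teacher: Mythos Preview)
Your proposal has a genuine gap in the equivalence step. Verifying only that the derived counit $L_\psi\psi Y \to Y$ is a weak equivalence is not sufficient for a Quillen equivalence; you need an additional condition, typically either that the derived unit is a weak equivalence, or that the left adjoint $L_\psi$ reflects weak equivalences between cofibrant objects. You do not address either. The paper takes the dual route: it first shows that $\psi$ \emph{reflects} stable equivalences (a direct cofinality computation on homotopy groups, since $\pi_k(\psi X)\cong \pi_k(X)$), and then checks the derived \emph{unit}. Because both categories are homotopically compactly generated, the unit need only be checked on the single generator $\Sigma^\infty_+(C_2\ltimes\U(n))$, where $L_\psi$ lands on $(C_2\ltimes\U(n))_+\wedge\CJ_1(0,-)$ and the verification is immediate. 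This is both shorter and logically complete.

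Two smaller points. First, your description of $\psi$ as ``restriction along an inclusion of indexing categories'' is not accurate: by definition $\psi(X)(V)=\Omega^{iV}X(\C\otimes V)$, so there is a genuine twist by an imaginary loop space, and preservation of fibrations and of $\Omega$-spectra must be argued with this in mind (the paper does this explicitly). Second, the Real Stiefel combinatorics of Section~\ref{section: equivariant stiefel combinatorics} play no role here; the generator computation is a direct coend calculation and does not involve the sphere-bundle or cofibre-sequence results. The concern you raise about simultaneously tracking the $C_2$- and $\U(n)$-actions is real, but in the paper's argument it is absorbed entirely into the observation that the underlying (non-equivariant) homotopy theory controls the weak equivalences, so the $\U(n)$-decoration on the generator comes along for free once the $C_2$-only case (Proposition~\ref{prop: QE between orth spec and CE_1}) is established.
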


We further give -- as Proposition \ref{prop: real spectra and E1} -- an equivalence of categories between the category of Real spectra of Schwede (Definition \ref{def: real spectra}) and $\CE_1$. 

\begin{alphprop}
The category of Real spectra, $C_2 \ltimes \us$, is equivalent to the category $\CE_1$.
\end{alphprop}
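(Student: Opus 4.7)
My plan is to leverage the non-equivariant analogue of this result from unitary calculus together with a careful bookkeeping of the $C_2$-action. Recall that in \cite{Ta19} the intermediate category $\E_1$ was shown to be equivalent to the category $\us$ of unitary spectra, by identifying $\E_1$ with the category of modules over the $\J_1$-structure, whose morphism spaces encode the Thom space data that realises the ``spectrification'' of a unitary sequence. Since by construction the category $\CJ$ of Section \ref{section: equivariant stiefel combinatorics} is obtained from $\J$ by letting $C_2$ act via complex conjugation on the indexing complex inner product spaces, the category $\CE_1$ is the $C_2$-equivariant enhancement of $\E_1$ in the most direct possible sense.

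The first step is therefore to unpack $\CE_1$ explicitly from Section \ref{section: derivative}: an object should be a sequence of pointed spaces $X_n$ carrying compatible actions of $C_2 \ltimes \U(n)$ together with equivariant structure maps whose source involves the one-point compactification of a complex inner product space, carrying the $C_2$-action induced by conjugation. The second step is to unpack Definition \ref{def: real spectra}: a Real spectrum in the sense of Schwede is a unitary spectrum together with a $C_2$-action by complex conjugation, which unwinds to the same list of data: spaces with $C_2 \ltimes \U(n)$-action and structure maps equivariant with respect to conjugation on the suspension coordinate.

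The third step is to exhibit mutually inverse functors between $\CE_1$ and $C_2 \ltimes \us$, both of which are the identity on underlying sequences and underlying actions. The content of the proposition is that the two ways of packaging the data describe the same category: on the $\CE_1$ side the $C_2$-action is built into the enrichment over $C_2$-equivariant pointed spaces via the $C_2$-action on $\CJ_1$, while on the $C_2 \ltimes \us$ side the $C_2$-action is imposed externally on top of the unitary spectrum structure. Verifying that these two presentations yield isomorphic categories amounts to checking that the semi-direct product structures line up on morphisms and that the $C_2$-action on the structure maps coming from conjugation on $\C$ matches on both sides.

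The main obstacle is purely bookkeeping: one must confirm that the enriched structure on $\CJ_1$ obtained by the $C_2$-action on complex inner product spaces induces exactly the same compatibility conditions between the $C_2$-action and the $\U(n)$-actions, as well as between the $C_2$-action and the structure maps, as those demanded in Schwede's definition of a Real spectrum. Once this compatibility is verified, the equivalence is formal, since both categories are nothing more than two syntactic presentations of the same underlying data.
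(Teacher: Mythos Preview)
Your approach is correct and is essentially the same as the paper's: both arguments amount to verifying that the known non-equivariant equivalence between $\E_1$ and unitary spectra is compatible with the $C_2$-actions by complex conjugation on each side. The paper makes this explicit by writing down the mutually inverse functors $\mathbb{U}X = \{X(\C^n)\}_n$ and $(\mathbb{P}Y)(V) = \J^{\mathbf{R}}(\C^n,V)_+ \wedge_{\U(n)} Y_n$ (the standard Kan extension off the skeleton), which you should also record rather than asserting the functors are ``the identity on underlying sequences'', since the extension step is where the well-definedness of the $C_2$-action needs to be checked.
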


In Section \ref{section: diff as QF}, we prove -- as Theorem \ref{thm: QE of E_0 and E_n} -- that the differentiation functor is a right Quillen functor as part of a Quillen equivalence between the $n$--homogeneous model structure on the category of functors with reality, and the $n$--stable model structure on the $n$--th level intermediate category.

\begin{alphtheorem}
For $n\geq 0$, the Quillen adjunction
\[
\adjunction{\res_0^n/\U(n)}{C_2 \ltimes \U(n)\E_n^\mathbf{R}}{n\homog\CE_0}{\ind_0^n \varepsilon^*}
\]
where $C_2 \ltimes \U(n)\E_n^\mathbf{R}$ is equipped with the $n$--stable model structure, is a Quillen equivalence.
\end{alphtheorem}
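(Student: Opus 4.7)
The plan is to adapt the strategy used for the analogous theorems in the orthogonal setting of Barnes--Oman \cite{BO13} and the unitary setting \cite{Ta19}, carefully tracking the additional $C_2$-equivariance arising from complex conjugation. First, one must verify that the stated pair is a Quillen adjunction: the right adjoint $\ind_0^n \varepsilon^*$ should preserve fibrations and acyclic fibrations from the $n$-homogeneous model structure on $\CE_0$ to the $n$-stable model structure on $C_2 \ltimes \U(n)\E_n^\mathbf{R}$. This should follow by a direct check on the generating sets, using that both model structures arise as appropriate Bousfield localisations of more rigid level-type structures for which the adjunction is already known to be Quillen.

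For the Quillen equivalence, the strategy is two-fold: show that $\ind_0^n \varepsilon^*$ reflects weak equivalences between fibrant objects, and that the derived unit is a weak equivalence on cofibrant objects. For the first, fibrant objects in $n\homog\CE_0$ are $n$-homogeneous functors with reality, and such a functor is determined up to weak equivalence by its $n$-th derivative equipped with the $C_2 \ltimes \U(n)$-action that records the internal $\U(n)$-symmetry together with the external $C_2$-symmetry coming from complex conjugation; this data is precisely what $\ind_0^n \varepsilon^*$ extracts. For the second, given a cofibrant $\Theta$ in $C_2 \ltimes \U(n)\E_n^\mathbf{R}$, one must compute $\ind_0^n \varepsilon^* \bigl((\res_0^n/\U(n))(\Theta)\bigr)^{\mathrm{fib}}$, where the fibrant replacement is taken in the $n$-homogeneous model structure. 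The key geometric input here is Proposition \ref{hocolim sphere homeo}, whose $C_2$-homeomorphism converts the defining homotopy colimit description of the derivative of an $n$-homogeneous functor constructed from $\Theta$ into an expression directly identifiable with $\Theta$ itself.

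The principal technical obstacle is verifying that this identification respects the full $C_2 \ltimes \U(n)$-equivariance, not merely providing an underlying non-equivariant weak equivalence. The $\U(n)$-action is controlled by the Stiefel data exactly as in the unitary case, so the new content lies in checking compatibility of the $C_2$-action at each stage: on the sphere bundle $S\Cgamma_{n+1}(V,W)$, on the homotopy colimit indexing over non-zero subspaces of $\C^{n+1}$, and on the fibrant replacement used in the $n$-homogeneous model structure. Once the $C_2$-equivariance is shown to be preserved throughout the cofinality-style identification employed in \cite{Ta19}, the derived unit becomes a stable $C_2 \ltimes \U(n)$-equivalence, which together with the reflection statement above establishes the Quillen equivalence.
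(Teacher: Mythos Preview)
Your high-level plan is correct and matches the paper's: establish the Quillen adjunction via the chain of localisations, then prove the equivalence by showing reflection of weak equivalences together with a derived unit check on cofibrant objects. However, the mechanism you describe for the derived unit is not the one the paper uses, and your identification of the key input is off.

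The paper does \emph{not} appeal to Proposition~\ref{hocolim sphere homeo} at this stage. That proposition is foundational input for the $n$--polynomial model structure and the fibre sequence relating $\tau_n$ to the derivative; it is not used to identify the derived unit. Instead, for cofibrant $X$ the paper passes through the intermediate category $\CE_1[\U(n)]$ via the already-established Quillen equivalence of Theorem~\ref{thm: intermediate cat as E_1}: one replaces $X$ by $\cofrep(\xi_n)^*\fibrep(\xi_n)_!X$ and reduces to checking the unit on this object. Using Lemma~\ref{lem: left derived functor characterisation} to rewrite $\mathbbm{L}\res_0^n/\U(n)$ as homotopy $\U(n)$--orbits, one lands on a functor of the form $V \mapsto [\Omega^\infty(S^{nV} \wedge \Theta)]_{h\U(n)}$. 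The identification of the derived unit then comes from Corollary~\ref{Weiss 6.4 for w/ reality} (which swaps the order of $\Omega^\infty$ and homotopy orbits up to $T_n$--equivalence) and Corollary~\ref{Weiss 5.7 for w/ reality} (which computes all derivatives of this functor explicitly and shows it is $n$--polynomial). The paper explicitly remarks that working in $\CE_1[\U(n)]$ rather than directly in $(C_2 \ltimes \U(n))$--spectra is forced here, because in the `with reality' zig--zag the relevant Quillen functors cannot be composed as they could in \cite{Ta19}.

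Your proposed route---a direct cofinality/sphere-bundle identification of the derivative with $\Theta$---would still need, at minimum, the content of Corollaries~\ref{Weiss 5.7 for w/ reality} and~\ref{Weiss 6.4 for w/ reality} to turn the homotopy-orbit expression into something whose $n$--th derivative is visibly $\Theta$; Proposition~\ref{hocolim sphere homeo} alone does not do this. Your emphasis on carefully tracking $C_2$--equivariance is well placed, but in the paper that tracking has already been absorbed into those earlier corollaries and into Theorem~\ref{thm: intermediate cat as E_1}, so no further equivariance check is needed in the proof of the present theorem itself.
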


As Theorem \ref{thm: homog char}, we classify the $n$--homogeneous functors in a similar way to other calculi, \cite{Go03, We95, Ta19}. 

\begin{alphtheorem}
If $F$ is a $n$--homogeneous functor with reality, then $F$ is levelwise weakly equivalent to the functor
\[
V \mapsto \Omega^\infty[(S^{\C^n \otimes V} \wedge \Theta_F^n)_{h\U(n)}]
\]
where $\Theta_F^n$ is a (orthogonal) spectrum with an action of $C_2 \ltimes \U(n)$. 
\end{alphtheorem}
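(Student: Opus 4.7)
The plan is to read off the classification from the zig--zag of Quillen equivalences assembled in Theorems A, B and C. Given an $n$--homogeneous functor with reality $F$, I would first take a cofibrant--fibrant replacement in the $n$--homogeneous model structure and then define
\[
\Theta_F^n := \psi \circ (\xi_n)^* \circ \ind_0^n \varepsilon^*(F),
\]
an object of $\os[C_2 \ltimes \U(n)]$. Since each of the three functors in this composite is the right adjoint of a Quillen equivalence between stable model structures, the derived counit applied to $F$ assembles into a zig--zag of weak equivalences between $F$ and the value of the total left derived composite on $\Theta_F^n$.

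The next step is to identify the total left derived composite with the stated formula. Unpacking the adjoints: $L_\psi$ reinterprets an orthogonal $C_2 \ltimes \U(n)$--spectrum as an object of $\CE_1[\U(n)]$ whose structure maps use the tautological Real representations that appear in the definition of $\Cgamma_n$; the functor $(\xi_n)_!$ is essentially a left Kan extension that, on evaluation, takes the $n$--th derivative style input and smashes with the representation sphere $S^{\C^n \otimes V}$ arising from the bundle $\Cgamma_n$; and $\res_0^n/\U(n)$ takes $\U(n)$--orbits after restricting to the $0$--th intermediate category (i.e.\ to functors with reality). Composing these, evaluating at $V \in \CJ_0$, and using that the stable model structure on $\CE_n^\mathbf{R}$ is modelled by an $\Omega$--spectrum condition (so that the passage from a cofibrant replacement back to an infinite loop space takes the form of $\Omega^\infty$), the resulting functor on cofibrant--fibrant objects is levelwise weakly equivalent to
\[
V \longmapsto \Omega^\infty\bigl[(S^{\C^n \otimes V} \wedge \Theta_F^n)_{h\U(n)}\bigr],
\]
with the homotopy orbits appearing precisely as the left derived version of $\res_0^n/\U(n)$, since cofibrant replacement before taking strict $\U(n)$--orbits produces the homotopy orbits.

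The main obstacle is the careful bookkeeping in the middle paragraph: tracking how the functor $(\xi_n)_!$ acts at the level of underlying objects and verifying that evaluating the combined composite at $V$ genuinely smashes with the representation sphere $S^{\C^n \otimes V}$ together with the correct $C_2 \ltimes \U(n)$--equivariance. This requires unravelling the definition of the morphism spaces in $\CJ$ and $\CI$--indexed categories and matching up the $C_2$--action coming from complex conjugation on the $\C^n$--factor with the $C_2$--action on $\Theta_F^n$. Once this is checked, the result follows formally from the fact that a derived unit/counit of a Quillen equivalence is a weak equivalence on cofibrant/fibrant objects, applied three times. Finally, I would observe that the formula is independent of the choices of cofibrant--fibrant replacements up to levelwise weak equivalence, which is automatic from the 2--out--of--3 property in each of the three model categories in the zig--zag.
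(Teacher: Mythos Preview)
Your proposal has a genuine structural gap. The composite $\psi \circ (\xi_n)^* \circ \ind_0^n\varepsilon^*$ does not type-check: $(\xi_n)^*$ is the right adjoint going from $\CE_1[\U(n)]$ \emph{to} $C_2 \ltimes \U(n)\E_n^\mathbf{R}$, not the other way, so it cannot be applied to $\ind_0^n\varepsilon^*(F)$. The functor that goes in the correct direction is the \emph{left} adjoint $(\xi_n)_!$. Consequently your claim that ``each of the three functors in this composite is the right adjoint of a Quillen equivalence'' is false, and there is no single derived counit to invoke. The paper stresses precisely this point in the introduction: in the `with reality' setting the zig--zag cannot be collapsed because one would have to compose left Quillen functors with right Quillen functors. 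So the derived image $\Theta_F^n$ must be defined by alternating derived left and right adjoints with intermediate (co)fibrant replacements, and the return trip likewise mixes $L_\psi$ (left), $(\xi_n)^*$ (right), and $\res_0^n/\U(n)$ (left).

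Because of this, your ``bookkeeping'' paragraph is where the actual mathematics lives and it is not merely formal. The paper does not attempt to unwind the composite abstractly; instead it builds the candidate functor $G(V)=\Omega^\infty[(S^{nV}\wedge\Theta_F^n)_{h\U(n)}]$ directly, proves independently that $G$ is $n$--polynomial (Example~\ref{ex: loop space polynomial}) and that moving $\Omega^\infty$ past homotopy $\U(n)$--orbits is a $T_n$--equivalence (Example~\ref{ex: loop spaces T_n--equiv}), and then computes the $n$--th derivative of $G$ explicitly and matches it with $\ind_0^n\varepsilon^*F$. Only after this does Whitehead's theorem for (co)localisations upgrade the resulting $n$--homogeneous equivalence to a \emph{levelwise} one. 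Your sketch omits both the $n$--polynomiality of the formula functor and the $\Omega^\infty$/homotopy-orbit interchange, and these are the substantive inputs rather than bookkeeping; without them you also cannot conclude levelwise (as opposed to merely $n$--homogeneous) weak equivalence.
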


In the final section, Section \ref{section: examples}, we discuss a few examples. These examples are similar to those from unitary calculus. This is expected, and further strengthens the analogy between the calculi and various versions of $K$--theory, in particular that unitary calculus can be obtained from the calculus with reality just as topological $K$--theory can be obtained from $KR$--theory, by `forgetting' the $C_2$--action.

\subsection*{Notation and Conventions}
We fix once and for all an isometric isomorphism $\C^n \cong \C \otimes \R^n$. This isomorphism fixes the complex conjugation on $\C^n$ as the one coming from $\C \otimes \R^n$. We denote by $C_2 \ltimes \U(n)$ the semi-direct product of the group of two elements, and the unitary group of degree $n$. We use $g$ to denote the non-identity element of $C_2$. For a complex vector space $V$, we denote by $nV$ the tensor product $\C^n \otimes_\C V$. 

We denote by $\T$ the category of based compactly generated weak Hausdorff spaces, and always equip $\T$ with the Quillen model structure with sets of generating cofibrations and generated acyclic cofibrations $I$ and $J$ respectively. Given an adjunction, $\adjunction{F}{\mathcal{C}}{\mathcal{D}}{G}$, the left adjoint will always be written on top or to the left had side, and the use of $\sim$ denotes an equivalence of categories.

\section{Real Stiefel Combinatorics}\label{section: equivariant stiefel combinatorics}
A theory of calculus is reliant on the notions of polynomials and derivatives. In the orthogonal and unitary calculus, the derivatives were constructed via relations between particular indexing categories, called the $n$--th jet categories (see \cite[Section 1]{We95} and \cite[Definition 4.3]{Ta19}). The relation between these categories gives the adjunctions used in the Quillen equivalences of \cite{BO13} and \cite{Ta19}. This task is subtly more difficult when dealing with the calculus with reality. To have a well-defined theory we need to carefully choose our indexing category to fix a complex conjugation. We start by fixing such a universe, constructing the $n$--th jet categories, and demonstrating relationships between these categories. 

\subsection{The universe}
Unitary calculus is indexed on the universe $\C^\infty$, i.e. in \cite{Ta19}, we considered functors which take values on finite--dimensional inner product subspaces of $\C^\infty$. In the calculus with reality setting, we want the universe, and all its finite--dimensional subspaces, to be closed under complex conjugation, as without such a closure condition, we would be unable to assign a $C_2\T$--enrichment on the category of input functors. For this, $\C^\infty$ is inappropriate, as is highlighted by the following example. 

\begin{ex}\label{ex: why real basis}
Consider the complex line $\ell \subset \C^2$ spanned by the vector $(1,i)$. This line is not closed under the inherited complex conjugation from $\C^2$, since $\overline{(1,i)} = (1, -i)$ is not proportional to $(1,i)$. Moreover, there are many choices for a complex conjugation on the line spanned by $(1,i)$. If we consider the above complex conjugation, the set of fixed points is clearly the trivial inner product space $\{\underline{0}\}$. However,  this line is isomorphic to $\C$, via the isomorphism $\varphi : \ell \longrightarrow \C, (1,i) \mapsto 1$. This isomorphism defines a complex conjugation on the line $\ell$ via $\overline{(1,i)} = \varphi^{-1}(\overline{\varphi(1,i)})$. This fixed point set of this complex conjugation is the real line. These complex conjugations do not agree, and there are several choices of the isomorphisms between $\ell$ and $\C$, and hence serval different choices for the complex conjugation.
\end{ex}

We choose the universe as $\C \otimes \R^\infty$. Within this universe we consider inner product spaces of the form $\C \otimes V$, for $V \subset \R^\infty$. Complex conjugation is then given by $\overline{c \otimes v} = \overline{c} \otimes v$ for $c \in \C, v \in V$, and the standard complex conjugation on $\C$.

\begin{rem}
The consideration of inner product spaces of the form $\C \otimes V$ is equivalent to requiring complex inner product spaces which have a real basis, that is, finite--dimensional complex inner product spaces such that there exists a basis $\beta$ consisting of only real inner products. For example, the standard basis on $\C^n$ is real. It is not hard to show that $V = \C \otimes V'$ if and only if $V'$ has a real basis. Assume $V$ is a complex finite--dimensional inner product space with real basis $\beta =\{\beta_1, \dots, \beta_k\}$. Then complex conjugation is given by
\[
\overline{v} = \overline{\sum_{i=1}^k \lambda_k \beta_k} = \sum_{i=1}^k \overline{\lambda_k} \beta_k
\]
where $\sum_{i=1}^k \lambda_k \beta_k$ is the unique expression of $v$ in terms of the basis vectors.
\end{rem}

\subsection{The indexing categories}
With the correct universe in place, the construction of the indexing categories is formal, and follows the orthogonal and unitary calculus versions, \cite{We95, Ta19}. 

\begin{definition}
Let $\CJ$ be the category of finite--dimensional real-based complex inner product subspaces of $\C \otimes \R^\infty$ with complex linear isometries. Define $\CJ_0$ to be the category with the same objects as $\CJ$ and morphisms $\CJ_0(U,V) = \CJ(U,V)_+$.
\end{definition}

There categories are $C_2\T$--enriched; they are topologised as the Stiefel manifold of $\dim(U)$--frames in $V$, with $C_2$ acting on the morphism spaces by conjugation by complex conjugation, i.e. for $f \in \CJ(U,V)$, the non--trivial element $g \in C_2$ and $u \in U$,
\[
(g \cdot f)(u) = gf(gu) = \overline{f(\overline{u})}.
\]

\begin{rem}
This enrichment is the underlying reason for considering inner product spaces with a real basis. As an example, the space of linear isometries from the line $\ell$ of Example \ref{ex: why real basis} to itself does not have a well defined $C_2\T$--enrichment.
\end{rem}

The $n$--th jet categories are also constructed similarly to orthogonal and unitary calculus. Sitting over the space of linear isometries $\CJ(U,V)$ is the $n$--th complement vector bundle
\[
\Cgamma_n(U,V) = \{ (f,x) \ : \ f \in \CJ(U,V), x \in \C^n \otimes f(U)^\perp\}
\]
where we have identified the cokernel of $f$ with the orthogonal complement of $f(U)$ in $V$. This vector bundle comes with a $C_2$--action induced from the diagonal $C_2$--action on $\CJ(U,V) \times( \C^n \otimes V)$.

\begin{definition}
Define the $n$--th jet category $\CJ_n$ to be category with the same objects as $\CJ$ and morphism space $\CJ_n(U,V)$, the Thom space of the vector bundle $\Cgamma_n(U,V)$.
\end{definition}

The spaces $\CJ_n(U,V)$ inherit a $C_2$--action from the vector bundle $\Cgamma_n(U,V)$, hence the $n$--th jet categories are $C_2\T$--enriched. As with orthogonal calculus \cite[Section 1]{We95} there are important relations between the morphism spaces of the $n$--th jet categories for varying $n$. These relations are crucial when considering the relationships between polynomial functors and derivatives. The following is the `with reality' version of \cite[Proposition 1.2]{We95}. To ease notation, given $V \in \CJ$, we denote the tensor product $\C^n \otimes V$ by $nV$. 

\begin{prop}\label{prop: cofibre sequence of jet categories}
The reduced mapping cone of the restricted composition
\[
\CJ_n(\C \oplus U,V) \wedge S^{2n} \longrightarrow \CJ_n(U,V)
\]
is $C_2$--homeomorphic to $\CJ_{n+1}(U,V)$, where we have identified $S^{2n}$ $C_2$-equivariantly with the closure of the subspace $\CJ_n(U, \C \oplus U)$ of pairs $(i, x)$ with $i : V \longrightarrow \C \oplus V$ the standard inclusion.
\end{prop}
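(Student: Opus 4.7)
The plan is to adapt Weiss's argument \cite[Proposition 1.2]{We95} to the $C_2$-equivariant setting by constructing an explicit $C_2$-homeomorphism via a polar decomposition of the $(n+1)$-th complement bundle, then verifying that each step respects complex conjugation. The real-based nature of the universe ensures that every orthogonal splitting used is $C_2$-equivariant.

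Two identifications set up the construction. First, the fixed isomorphism $\C^{n+1} \cong \C \oplus \C^n$ provides, for each $h \in \CJ(U,V)$, a $C_2$-equivariant splitting
\[
\C^{n+1} \otimes h(U)^\perp = (\C \otimes h(U)^\perp) \oplus (\C^n \otimes h(U)^\perp),
\]
so I can write $z \in \C^{n+1} \otimes h(U)^\perp$ as $(z_0, z')$. The subbundle $\{z_0 = 0\}$ of $\Cgamma_{n+1}(U,V)$ is $C_2$-equivariantly $\Cgamma_n(U,V)$, realising $\CJ_n(U,V)$ as a closed subspace of $\CJ_{n+1}(U,V)$. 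Second, the map $g \mapsto (g \circ i_{\mathrm{std}},\, g(1,0))$ identifies $\CJ(\C \oplus U, V)$ $C_2$-equivariantly with the unit sphere bundle $\{(h,v) : h \in \CJ(U,V),\ v \in S(h(U)^\perp)\}$; equivariance uses that the standard inclusion is fixed by complex conjugation.

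Fix a homeomorphism $\phi : [0,\infty) \to [0,1)$ with $\phi(0) = 0$, and define
\[
\Phi : \CJ_{n+1}(U,V) \longrightarrow \CJ_n(U,V) \cup_f C\bigl(\CJ_n(\C \oplus U, V) \wedge S^{2n}\bigr)
\]
by: the basepoint goes to the cone tip; $(h,(0, z'))$ maps to $(h, z') \in \CJ_n(U,V)$; for $(h,(z_0, z'))$ with $z_0 \neq 0$, set $v := z_0/|z_0| \in S(h(U)^\perp)$, orthogonally decompose $z' = w + y \otimes v$ inside $\C^n \otimes h(U)^\perp$ with $w \in \C^n \otimes ((h,v)(\C \oplus U))^\perp$ and $y \in \C^n$, and send the point to $(((h,v), w), y, \phi(|z_0|))$ in the open cone. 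The inverse is explicit: a cone point $(((h,v), w), y, t)$ with $t \in (0,1)$ reconstructs $(h, (\phi^{-1}(t) v,\, w + y \otimes v))$, and $(h, z') \in \CJ_n(U,V)$ reconstructs $(h, (0, z'))$.

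The delicate step is continuity across the locus $\{z_0 = 0\}$, where the polar direction $v$ is undefined; this is resolved automatically because $\phi(|z_0|) \to 0$ as $z_0 \to 0$, and the reduced mapping cone gluing identifies
\[
(((h, v), w, y), 0) \;\sim\; f((h, v), w, y) = (h, w + y \otimes v) = (h, z'),
\]
which matches the image of $(h, (0, z'))$ regardless of which $v$ is chosen. Continuity at the basepoint is standard: $|z_0| \to \infty$ sends $\phi \to 1$ (the cone tip), while unboundedness in $w$ or $y$ forces the smash basepoint. Finally, $\Phi$ is $C_2$-equivariant because the splitting $\C^{n+1} = \C \oplus \C^n$ is $C_2$-equivariant, $|z_0|$ and $\phi$ are $C_2$-invariant, $\overline{z_0/|z_0|} = \overline{z_0}/|\overline{z_0}|$, and orthogonal decomposition is $C_2$-equivariant in real-based inner product spaces. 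The main obstacle is precisely the gluing-across-$\{z_0 = 0\}$ issue, which is exactly what the reduced mapping cone is designed to absorb.
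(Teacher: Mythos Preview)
Your proof is correct and follows essentially the same route as the paper's: both construct the explicit homeomorphism arising from the decomposition $\C^{n+1}\otimes h(U)^\perp \cong (\C\otimes h(U)^\perp)\oplus(\C^n\otimes h(U)^\perp)$ together with the further splitting of the $\C^n$-factor along the line $f(\C)$, then verify $C_2$-equivariance directly. The only cosmetic difference is orientation: the paper writes the map from the cone $\mathcal{P}$ to $\CJ_{n+1}(U,V)$ via $(t,f,y,z)\mapsto (f|_U,\, y + (f|_\C)(z) + t\,\nu((f|_\C)(1)))$, which is precisely your inverse $(((h,v),w),y,t)\mapsto (h,(\phi^{-1}(t)v,\, w + y\otimes v))$ after matching notation and reparametrising the cone coordinate; your additional care with continuity at the gluing locus $\{z_0=0\}$ is a welcome elaboration of a point the paper leaves implicit.
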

\begin{proof}
Denoting by $\mathcal{P}$ the relevant mapping cone, which is a quotient of
\[
[0, \infty] \times \CJ_n(U,V) \times S^{2n}
\]
the isomorphism is specified by
\[
\mathcal{P} \longrightarrow \CJ_{n+1}(U,V),\quad (t,f,y,z) \mapsto (f|_U, y + (f|_\C)(z) + \nu((f|_\C)(1)))
\]
where $\nu: V \longrightarrow (n+1)V$ $C_2$-equivariantly identifies $V$ with the orthogonal complement of $nV$ in $(n+1)V$. The group $C_2$ acts on $(t, f, y, z)$ as $(t, gfg, gy, gz)$. Under this isomorphism, this is mapped to
\[
((gfg)|_U, gy + ((gfg)|_\C)(gz) + t\nu(((gfg)|_\C)(1)).
\]
This can be rewritten as
\[
(g \cdot (f|_U), gy + g((f|_\C)(z)) + g(\nu((f|_\C)(1))).
\]
which is precisely the image of the $C_2$--action on the image of $(t,f,y,z)$ under the isomorphism.
\end{proof}


Another essential result for both orthogonal and unitary calculi is the ability to write the sphere bundle of the $n$--th complement vector bundle as a homotopy colimit (see \cite[Proposition 4.2]{We95} and \cite[Theorem 4.1]{Ta19}). The same result holds in this context.

\begin{prop}\label{hocolim sphere homeo}
The sphere bundle $S\Cgamma_{n+1}(V,W)$ is $C_2$--homeomorphic to
\[
\underset{0 \neq U \subset \C^{n+1}}{\hocolim}~\CJ(U \oplus V, W).
\]
\end{prop}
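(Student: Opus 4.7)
My plan is to adapt the explicit geometric construction of Weiss \cite[Proposition 4.2]{We95} (and its unitary analog \cite[Theorem 4.1]{Ta19}) to the Real setting. The underlying non-equivariant homeomorphism between the two sides is essentially known; the bulk of the new work lies in setting up and verifying the $C_2$-equivariance.

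First, I would realise
\[
\hocolim_{0 \neq U \subset \C^{n+1}} \CJ(U \oplus V, W)
\]
as a two-sided bar construction on the poset $\mathcal{P}_{n+1}$ of non-zero complex subspaces of $\C^{n+1}$. The choice of universe $\C \otimes \R^\infty$ endows $\C^{n+1}$ with a canonical real structure, so $\mathcal{P}_{n+1}$ carries a $C_2$-action $U \mapsto \overline{U}$, and complex conjugation yields natural identifications $\CJ(U \oplus V, W) \cong \CJ(\overline{U} \oplus V, W)$ by $\alpha \mapsto (w \mapsto \overline{\alpha(\overline{w})})$, so that the bar construction inherits a canonical $C_2$-action.

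Next I would write down an explicit continuous map
\[
\Phi : \hocolim_{0 \neq U \subset \C^{n+1}} \CJ(U \oplus V, W) \longrightarrow S\Cgamma_{n+1}(V,W)
\]
which sends a $k$-simplex of the bar construction, represented by a chain $U_0 \subsetneq \cdots \subsetneq U_k$, barycentric coordinates $(t_0, \ldots, t_k) \in \Delta^k$, and an isometry $\alpha : U_k \oplus V \to W$, to the pair $(f, x)$ with $f = \alpha|_V$ and $x \in \C^{n+1} \otimes f(V)^\perp$ the unit vector corresponding, under the standard self-dual identification $\C^{n+1} \otimes f(V)^\perp \cong \Hom_\C(\C^{n+1}, f(V)^\perp)$, to a normalised weighted combination of the maps $\alpha|_{U_i} \circ \pi_{U_i}$, with $\pi_{U_i}$ the orthogonal projection $\C^{n+1} \to U_i$. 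An inverse is produced by polar decomposition: given $(f, x)$, view $x$ as a linear map $\tilde x : \C^{n+1} \to f(V)^\perp$, extract the chain as the strict eigenspace filtration of $(\tilde x^\ast \tilde x)^{1/2}$, the barycentric coordinates from the ordered normalised eigenvalues, and the top isometry from the polar factor together with $f$. That $\Phi$ is a homeomorphism then follows by the argument of \cite[Proposition 4.2]{We95}.

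The central task specific to the Real setting is verifying that $\Phi$ intertwines the two $C_2$-actions. On the target $g$ acts by $(f, x) \mapsto (gfg, g \cdot x)$, where the conjugation on $x \in \C^{n+1} \otimes f(V)^\perp$ uses the standard real basis of $\C^{n+1}$ and the induced conjugation on $f(V)^\perp \subset W$. Unfolding the definitions, equivariance reduces to routine observations: the orthogonal projection $\pi_U$, subspace inclusion, composition of isometries, and the self-dual identification $\C^{n+1} \cong (\C^{n+1})^\ast$ all intertwine complex conjugation with its $\overline{U}$-versions, and the normalising factor is manifestly $C_2$-invariant since it is built from Hilbert--Schmidt norms of conjugation-equivariant data. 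I expect the compatibility of the tensor-Hom identification with conjugation to be the only genuinely Real-sensitive ingredient, and hence the main---though ultimately routine---obstacle in the argument.
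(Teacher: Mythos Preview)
Your proposal is correct and follows essentially the same route as the paper: both invoke the underlying non-equivariant homeomorphism from \cite[Proposition 4.2]{We95}/\cite[Theorem 4.1]{Ta19} and reduce the result to a $C_2$-equivariance check, with the key ingredient being the compatibility of the identification $\C^{n+1}\otimes f(V)^\perp\cong\Hom_\C(\C^{n+1},f(V)^\perp)$ with complex conjugation. The only notable presentational difference is that you verify equivariance on the forward map $\Phi$ (hocolim $\to$ sphere bundle), whereas the paper works with the inverse $\Psi$ and carries out the explicit computation that the eigenspaces of $(g\cdot x)^*(g\cdot x)$ are the complex conjugates of those of $x^*x$ with identical eigenvalues; your direction arguably makes the check more transparent, since conjugation visibly commutes with projections and weighted sums without invoking spectral theory.
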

\begin{proof}
The unitary version is given in \cite[Theorem 4.1]{Ta19}. It is enough to check that this construction is suitably $C_2$--equivariant. The homeomorphism is constructed as follows.
\[
\Psi : \CJ_{n+1}(V,W)\setminus \CJ_0(V,W) \longrightarrow (0,\infty) \times \hocolim_U \CJ(U \oplus V,W),
\]
which is given by $\Psi(f,x) = (t,G,z,p)$, where
\begin{enumerate}
\item $G: [k] \longrightarrow \mathbf{R}_{\leq n+1}$ is a functor given by
\[
r \mapsto E(\lambda_0) \oplus \cdots \oplus E(\lambda_{k-r})
\]
where the $E(\lambda_i)$ is the eigenspaces which constitute direct summands of $\C^{n+1}$ corresponding to the distinct eigenvalues of $x^\ast x$, where we have identified $\C^{n+1} \otimes f(V)^\perp \cong \Hom(\C^{n+1}, f(V)^\perp)$.
\item $z \in \J(G(0) \oplus V, W)$ a linear isometry, given by
\[
z =
\begin{cases}
f \ \text{on} \ V \\
\lambda_i^{-1/2}x \ \text{on} \ E(\lambda_i).
\end{cases}
\]
\item $p \in \Delta^k$ given by the barycentric coordinates
\[
\lambda_k^{-1}(\lambda_0, \lambda_1 - \lambda_0, \cdots, \lambda_k - \lambda_{k-1}).
\]
\item $t = \lambda_k \leq 0$.
\end{enumerate}
We check that $\Psi$ is $C_2$--equivariant. Let $(f,x) \in \J_{n+1}^\mathbf{R}(V,W)\setminus \J_0^\mathbf{R}(V,W)$. Then $g\cdot(f,x) = (gfg, g\cdot x)$. Following $g\cdot x \in \C^{n+1} \otimes f(V)^\perp$ through the isomorphism $\Hom(\C^{n+1}, f(V)^\perp)$, we see that when thinking of the vector $g\cdot x$ as a map, it is equal to the map
\[
gxg : \C^{n+1} \longrightarrow f(V)^\perp, (c_i) \mapsto \overline{x(\overline{(c_i)})}.
\]
The map $g\cdot x$ had an adjoint $(g\cdot x)^* : f(V)^\perp \longrightarrow \C^{n+1}$, and hence we get a self--adjoint map
\[
(g\cdot x)^*(g\cdot x) : \C^{n+1} \longrightarrow \C^{n+1}.
\]
Note that complex conjugation defines a self map $g: \C^{n+1} \longrightarrow \C^{n+1}$ which is an isometric isomorphism, and hence the adjoint of $g$ equals the inverse of $g$. It follows that $(g\cdot x)^* = g(x^*)g$, and hence
\[
(g\cdot x)^*(g\cdot x) : \C^{n+1} \longrightarrow \C^{n+1}, (c_i) \mapsto \overline{x^*(x(\overline{(c_i)}))}.
\]
The eigenvectors of $(g\cdot x)^*(g\cdot x)$ are hence the complex conjugate of the eigenvectors of $x^*x$, with the same corresponding eigenvalues, in particular, these eigenvalues are distinct, positive and real (see \cite[Lemma pg.329 and Theorem 6.24]{FIS89}). Denote by $\overline{E(\lambda_i)}$ the eigenspace of eigenvectors of $(g\cdot x)^*(g\cdot x)$ associated to the eigenvalue $\lambda_i$, i.e. $\overline{E(\lambda_i)}$ is the space of complex conjugates of the eigenvectors of $x^*x$ associated with the eigenvalue $\lambda_i$.

The image of $(gfg, g\cdot x)$ under $\Phi$ is $(t, gG, gz, p)$, where
\begin{description}
\item[1)] the functor $gG : [k] \longrightarrow \mathbf{R}_{\leq n+1}$ is given by
\begin{equation*}
r \mapsto \overline{E(\lambda_0)} \oplus \dots \oplus \overline{E(\lambda_{k-r})}
\end{equation*}
\item[2)] the linear isometry $z \in \J^\mathbf{R}(gG(0) \oplus V, W)$ is given by
\begin{equation*}
z = \begin{cases}
gfg \ \text{on} \ V \\
\lambda_i^{-1/2} (g\cdot x) \ \text{on} \ \overline{E(\lambda_i)}
\end{cases}
\end{equation*}
\item[3)] the point $p \in \Delta^k$ is given by the barycentric coordinates
\begin{equation*}
\lambda_k^{-1} \cdot (\lambda_0, \lambda_1 - \lambda_0, \lambda_2-\lambda_1, \dots, \lambda_k - \lambda_{k-1})
\end{equation*}
\item[4)] and $t=\lambda_k >0$.
\end{description}
This description matches the description of $g\cdot \Phi(f,x)$, and hence $\Phi$ is $C_2$--equivariant.
\end{proof}

\begin{rem}
The above result is often described by the experts as the `crucial' result for the (orthogonal) calculi to work. It is instrumental in constructing the $n$--polynomial model structure. If one were to consider a `genuine' equivariant version of orthogonal calculus, that is studying $G\T$--enriched functors from $\J$ to $G\T$ for some group $G$, one would need to carefully choose the universe and indexing category $\J$ in order to have a suitably $G$--equivariant result as above. In this paper however, we will only be concerned with the $C_2$--action coming from complex conjugation.
\end{rem}

\section{Polynomial Functors with Reality}\label{section: poly with reality}
Any good theory of calculus, for example \cite{Go90, Go91, Go03, We95, Ta19}, is built on the notion of polynomial functors. These polynomial functors approximate a given functor in such a way to produce a Taylor tower which has strong analogy with Taylor series from differential calculus. The layers of the tower are ``homogeneous functors'' which in each version of functor calculus are characterised by spectra with a particular group action. For example, the orthogonal $n$--homogeneous functors are characterised by spectra with an action of $\O(n)$, \cite[Theorem 7.3]{We95}.

\subsection{The input functors}
We start by describing the category of input functors for calculus with reality. These are the functors one would wish to study in the calculus, and are built using the zeroth jet category. 

\begin{definition}
Define $\CE_0$ to be the category of $C_2\T$--enriched functors from $\CJ_0$ to $C_2\T$.
\end{definition}

This notation is chosen for good reason. One should think of $\CE_0$ as the input category for unitary calculus with an interwoven $C_2$--action. This category comes with several levelwise model structures. The choice of model structures comes from a choice of model structures on $C_2\T$. We choose to work with the Quillen model structure transferred from $\T$ through the adjunction
\[
\adjunction{{C_2}_+ \wedge -}{\T}{C_2\T}{i^*}
\]
where $i$ is the inclusion of the trivial group in $C_2$. The weak equivalences are the underlying weak homotopy equivalences, and the fibrations are the underlying Serre fibrations. The generating cofibrations are of the form $(C_2)_+ \wedge i$ for $i \in I$, and the generating acyclic cofibrations are of the form $(C_2)_+ \wedge j$ for $j \in J$, where $I$ and $J$ denote the set of generating cofibrations and generating acyclic cofibrations for the Quillen model structure on $\T$, respectively. The $C_2$--$CW$--complexes are thus built from cells of the form $(C_2)_+ \wedge D_+^n$. As such we use the following model structure on $\CE_0$.

\begin{prop}
There is a cellular, proper and topological model structure on the category $\CE_0$, where a map $f: E \longrightarrow F$ is a weak equivalence (resp. fibration) if and only if for each $V \in \J^\mathbf{R}$, $f_V : E(V) \longrightarrow F(V)$ is a weak homotopy equivalence (resp. Serre fibration) in $C_2\T$. The generating (acyclic) cofibrations are of the form $\J_0(V,-) \wedge {C_2}_+ \wedge i$ for $i$ a generating (acyclic) cofibration of the Quillen model structure on $\T$.
\end{prop}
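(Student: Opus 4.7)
The plan is to obtain this model structure as the projective (levelwise) model structure on a category of enriched functors, mimicking the construction of the corresponding model structure in orthogonal and unitary calculus (see \cite{BO13, Ta19}). First, I would note that $C_2\T$, equipped with the Quillen model structure transferred along the free-forgetful adjunction with $\T$, is a cellular, proper and topological $C_2\T$-enriched model category with cofibrant unit; the generating (acyclic) cofibrations are the sets $(C_2)_+ \wedge I$ and $(C_2)_+ \wedge J$. Moreover, $\CJ_0$ is a small $C_2\T$-enriched category. These are precisely the inputs needed to invoke the standard existence theorem for projective model structures on categories of enriched functors (cf.\ \cite[Theorem A.1.1]{Ta19} or Hirschhorn's machinery for diagram categories).

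Concretely, by the enriched Yoneda lemma, the evaluation functor $\Ev_V : \CE_0 \to C_2\T$ has left adjoint $A \mapsto \CJ_0(V,-) \wedge A$, and this adjunction passes a generating (acyclic) cofibration $A \to B$ of $C_2\T$ to $\CJ_0(V,-) \wedge A \to \CJ_0(V,-) \wedge B$ in $\CE_0$. I would then apply Kan's recognition theorem: the candidate generating sets $\{\CJ_0(V,-) \wedge (C_2)_+ \wedge i : V \in \CJ_0, i \in I\}$ and the analogous set from $J$, together with the levelwise weak equivalences, satisfy the small object argument (the domains are small because representables are compact and the target category is locally presentable), and relative $J$-cell complexes are levelwise weak equivalences (since acyclic cofibrations in $C_2\T$ are closed under pushout and transfinite composition, and these operations are computed levelwise in $\CE_0$). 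This produces the required cofibrantly generated model structure with the advertised generators.

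For the remaining adjectives: \emph{topological} enrichment follows formally because the closed symmetric monoidal structure on $\CE_0$ is computed from the closed symmetric monoidal structure on $C_2\T$, and the pushout-product axiom reduces to the corresponding axiom in $C_2\T$ together with a representable-vs-representable check that is automatic. \emph{Left and right properness} are checked levelwise, where they reduce to the left and right properness of $C_2\T$, itself inherited from $\T$. For \emph{cellularity}, I would verify Hirschhorn's three conditions (smallness of the domains of generating cofibrations, smallness of the codomains relative to the generating cofibrations, and effective-monomorphism status of cofibrations) by checking them at each object $V \in \CJ_0$ and using that $C_2\T$ is cellular.

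The only genuine subtlety, and what I expect to be the main obstacle, is the verification that relative $J$-cell complexes are levelwise weak equivalences, since this requires knowing that $-\wedge \CJ_0(V,-)$ preserves acyclic cofibrations objectwise; but this is immediate because smashing with any based space preserves acyclic cofibrations of based spaces, and the levelwise structure of $\CE_0$ reduces everything to $C_2\T$. Apart from this, the entire construction is a routine specialisation of the enriched projective model structure and mirrors the analogous result \cite[Proposition 6.3]{Ta19} in the non-equivariant unitary setting.
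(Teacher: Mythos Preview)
Your proposal is correct and follows the standard route: the paper does not actually supply a proof of this proposition, treating it as a routine instance of the projective model structure on enriched functor categories (in line with \cite{MMSS01}, \cite{BO13}, and \cite{Ta19}). Your outline via the enriched Yoneda left adjoints and Kan's transfer/recognition theorem is exactly the argument the paper is implicitly invoking, so there is nothing to compare.
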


One reason for this choice stems from the polynomial model structure. If we were to start with the fixed points model structure on $C_2\T$, we would be unable to verify that the polynomial approximation functors preserves levelwise weak equivalences on fixed points, since this fact relies on the fact that homotopy limits preserves weak equivalences. However homotopy colimits do not, in general, commute with fixed points, and hence the polynomial approximation functors would not interact well with the fixed--points model structure on $\CE_0$. As such, we could not verify the existence of a Bousfield--Friedlander local model structure as in \cite[Proposition 6.5]{BO13} and \cite[Proposition 3.9]{Ta19}, when using the fixed points model structure.


\begin{rem}
The theory of a calculus with reality has previously been studied by Tynan in his thesis \cite{Ty16}. Tynan considered functors from the category of real inner product spaces, but frequently uses the complexification functor to extend to the category of complexified real inner product spaces, which is what we consider here. We feel that our approach is more natural since many of the functors one would wish to consider in a calculus with reality come from the unitary calculus, rather than orthogonal calculus. It should be noted that there is an equivalence of categories between the input category of Tynan and our input category. Moreover, his use of the complexification functor feeds into the authors previous work on comparing orthogonal and unitary calculi \cite{Ta20}.  We further prefer our approach as it make clearer the equivariance involved, and classifies the $n$--homogeneous functors, which is a theorem noticeably absent from \cite{Ty16}. 
\end{rem}

\begin{definition}
Define $\mathbf{R}_{\leq n}$ to be the poset of non--zero subspaces of $\C \otimes_\R \R^n$ with a real basis, and $\mathbf{R}_{<n}$ be the poset of proper non--zero subspaces of $\C \otimes_\R \R^n$ with a real basis.
\end{definition}

\begin{rem}
Note that the poset of all non--zero subspaces of $\C \otimes_\R \R^n (\cong \C^n)$ is closed under complex conjugation, i.e. the line $(1,i)$ is sent to $(1, -i)$ which although is not proportional, is still in the poset. This requirement of real bases comes from the definition of functors in $\CE_0$, i.e. they can only take values on inner product spaces with real bases.
\end{rem}

As a direct corollary of the existence of the projective model structure, we achieve the following.

\begin{cor}\label{domains and codomains of S_n are cofibrant}
The objects $\CJ_n(V,-)$ and $S\Cgamma_{n+1}(V,-)_+$ are cofibrant in $\CE_0$ for each $n \geq 0$.
\end{cor}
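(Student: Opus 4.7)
The plan is to exhibit both functors as cell complexes in the projective model structure on $\CE_0$. Recall that the generating cofibrations have the form $\CJ_0(V',-) \wedge (C_2)_+ \wedge i$ for $i \in I$, so a functor is cofibrant if it is a retract of a transfinite composition of pushouts of such maps. Equivalently, it suffices to equip each of $\CJ_n(V,-)$ and $S\Cgamma_{n+1}(V,-)_+$ with a $C_2$-equivariant cell decomposition whose cells are free of the above form, and whose attaching maps are morphisms in $\CE_0$ (i.e. functorial in the $\CJ_0$-variable).

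First I would observe that, with our choice of universe $\C \otimes \R^\infty$, the Stiefel manifold $\CJ(V,W)$ carries a natural Real structure, and a standard Schubert-cell-type argument yields a $C_2$-equivariant CW structure on $\CJ(V,W)$ with complex conjugation acting cellularly. The $C_2$-equivariant vector bundle $\Cgamma_n(V,W)$ over this base then admits a fibrewise cell structure, and passing to its Thom space produces a $C_2$-CW structure on $\CJ_n(V,W)$ in which every cell is of the required free form $\CJ_0(V',-) \wedge (C_2)_+ \wedge D^k_+$. Postcomposition in $W$ makes this construction functorial in the $\CJ_0$-variable, so $\CJ_n(V,-)$ is a cell object of $\CE_0$. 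The analogous Thom-construction argument applied to the sphere bundle produces the desired $C_2$-CW structure on $S\Cgamma_{n+1}(V,-)$, and adjoining a disjoint basepoint yields cofibrancy of $S\Cgamma_{n+1}(V,-)_+$ in $\CE_0$.

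The main obstacle will be arranging the cell structures to be simultaneously $C_2$-equivariant and functorial in $W$ in a compatible way. The real-basis restriction on the universe is precisely what makes this possible: without it, as highlighted in Example \ref{ex: why real basis}, even the $C_2\T$-enrichment on the indexing category would fail to be well defined, let alone a $C_2$-CW structure on the Stiefel manifolds. Once this point is verified, the argument is a direct adaptation of the corresponding cofibrancy statement for the unitary calculus \cite{Ta19}, which in turn is modelled on \cite[Section 7]{We95}.
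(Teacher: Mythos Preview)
Your plan has a concrete obstruction that is more fundamental than the functoriality issue you flag. A cell of the form $\CJ_0(V',-)\wedge(C_2)_+\wedge D^k_+$ is, at each level $W$, a based $C_2$-space on which the action is free away from the basepoint: the factor $(C_2)_+$ forces this regardless of how $C_2$ acts on $\CJ_0(V',W)$. Hence any cell complex, or retract of one, built from the generating cofibrations is levelwise $C_2$-free away from the basepoint. But the conjugation action on $\CJ(V,W)$ fixes every real linear isometry, so its fixed-point set is the entire real Stiefel manifold; likewise the non-basepoint fixed points of $\CJ_n(V,W)$ contain the Thom space of the real $n$-th complement bundle. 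These are far from a single point, so no free-cell decomposition of the kind you describe can exist. The Schubert-type $C_2$-CW structure you envisage would necessarily contain fixed cells in addition to free ones, and fixed cells are not among the generating cofibrations of this model structure. The analogue in unitary calculus works precisely because there is no additional group acting, so the generating cells there carry no freeness constraint.

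The paper's argument avoids explicit cell decompositions entirely. It takes the cofibrancy of the representable $\CJ_0(V,-)$ as immediate from the definition of the projective model structure, then uses Proposition~\ref{hocolim sphere homeo} to identify $S\Cgamma_{n+1}(V,-)_+$ with a homotopy colimit of such representables $\CJ_0(U\oplus V,-)$, and appeals to \cite[Theorem~18.5.2(1)]{Hi03} to deduce that this homotopy colimit of cofibrant objects is cofibrant. Finally $\CJ_{n+1}(V,-)$ is the mapping cone of the map $S\Cgamma_{n+1}(V,-)_+\to\CJ_0(V,-)$ between cofibrant objects, and mapping cones of maps between cofibrant objects are cofibrant. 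The hocolim description is functorial in the $\CJ_0$-variable by construction, so the issue you single out as the ``main obstacle'' never arises.
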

\begin{proof}
By the definition of the model structure, $\CJ_0(V,-)$ is cofibrant. The sphere bundle $S\Cgamma_n(V,-)_+$ is homeomorphic to the homotopy colimit
\[
\underset{U \in \mathbf{R}_{< n+1}}{\hocolim}~\CJ_0(U \oplus V, -),
\]
and hence $S\Cgamma_n(V,-)_+$ is cofibrant by \cite[Theorem 18.5.2(1)]{Hi03}.
It follows that since $ S\Cgamma_n(V,-)_+$ and  $\CJ_0(V,-)$ are both cofibrant, and mapping cones of a map between cofibrant objects are cofibrant, that $\CJ_{n+1}(V,-)$ is cofibrant, as the mapping cone of  $S\Cgamma_n(V,-)_+ \longrightarrow \CJ_0(V,-)$.
\end{proof}

\subsection{Polynomial functors with reality} \label{subsection: poly with reality}

Polynomial functors are objects of $\CE_0$ which satisfy extra conditions making them behave like polynomial functions from differential calculus. We start with the definition. This is similar to \cite[Definition 5.1]{We95} and \cite[Definition 2.1]{Ta19}.

\begin{definition}
A functor $F \in \CE_0$ is \textit{polynomial of degree less than or equal $n$} or \textit{$n$--polynomial} if the canonical map
\[
\rho: F(V) \longrightarrow \underset{U \in \mathbf{R}_{\leq n}}{\holim} F(V \oplus U) =: \tau_n F(V)
\]
is a weak equivalence in $C_2\T$.
\end{definition}

\begin{rem}\label{rem: C2 action on holim}
The homotopy limit is constructed to take into account the fact that the poset $\mathbf{R}_{\leq n+1}$ is a category object in the category of spaces, see \cite[Section 4]{We95} and \cite[Appendix]{Lin09}. In particular, the homotopy limit is the totalisation of a cosimplicial space, and hence can be expressed as an enriched end. It  acquires a $C_2$--action from the general fact that  if a diagram has a $G$--action, the end inherits such an action via the following induced map on equalisers
\[
\xymatrix{
\int_i X_{i,i} \ar[r] \ar@{-->}[d]_{g_{\int X}} & \underset{i}{\prod} X_{i,i} \ar@<0.9ex>[r]^s \ar@<-0.9ex>[r]_t \ar[d]_{\prod g_{X_{i,i}}} & \underset{\alpha: i \longrightarrow j}{\prod}X_{j,i} \ar[d]_{\prod g_{X_{j,i}}} \\
\int_i X_{i,i} \ar[r]  & \underset{i}{\prod} X_{i,i} \ar@<0.9ex>[r]^s \ar@<-0.9ex>[r]_t  & \underset{\alpha: i \longrightarrow j}{\prod} X_{j,i}. \\
}
\]
\end{rem}

There is an alternative characterisation of when a functor is $n$--polynomial which is essential for characterising the fibrant objects in the $n$--polynomial model structure. The proof follows as in \cite[Proposition 5.3]{We95} upon noting the $C_2$--equivariance of Proposition \ref{hocolim sphere homeo}.

\begin{prop}\label{relation of sphere bundle and holim}
Let $F \in \CE_0$. Then $F$ is $n$--polynomial if and only if
\begin{equation*}
p^*: F(V) \longrightarrow \CE_0(S\gamma_{n+1}(V,-)_+,F)
\end{equation*}
is a weak homotopy equivalence.
\end{prop}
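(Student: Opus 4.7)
The plan is to adapt Weiss's proof of \cite[Proposition 5.3]{We95}, carefully tracking $C_2$-equivariance at every step. The three key ingredients are the $C_2$-equivariant homeomorphism of Proposition \ref{hocolim sphere homeo}, the $C_2\T$-enriched Yoneda lemma, and the fact that $\CE_0(-,F)$ converts (enriched) homotopy colimits in its first argument into homotopy limits of mapping spaces.

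First, I would use Proposition \ref{hocolim sphere homeo} to identify $S\Cgamma_{n+1}(V,-)_+$ in $\CE_0$ with the homotopy colimit
\[
\underset{0 \neq U \subset \C^{n+1}}{\hocolim}~\CJ_0(U \oplus V, -)
\]
by a $C_2$-equivariant isomorphism. Applying $\CE_0(-, F)$, commuting the resulting $\hocolim$ out as a $\holim$, and invoking the enriched Yoneda isomorphism $\CE_0(\CJ_0(U \oplus V, -), F) \cong F(U \oplus V)$ yields a $C_2$-equivariant identification
\[
\CE_0(S\Cgamma_{n+1}(V,-)_+, F) \;\cong\; \underset{0 \neq U \subset \C^{n+1}}{\holim}~F(U \oplus V).
\]
Combined with the enriched Yoneda identification $F(V) \cong \CE_0(\CJ_0(V,-), F)$, one checks that under this chain the map $p^*$ corresponds precisely to the canonical comparison map $\rho \colon F(V) \to \tau_n F(V)$ defining $n$-polynomiality (after the standard cofinality step reconciling the indexing conventions $\mathbf{R}_{\leq n+1} \setminus \{0\}$ and $\mathbf{R}_{\leq n}$).

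Since weak equivalences in $\CE_0$ are levelwise weak equivalences in $C_2\T$, and the chosen model structure on $C_2\T$ has weak equivalences equal to the underlying weak homotopy equivalences, the map $p^*$ is a weak equivalence precisely when $\rho$ is; this is exactly the $n$-polynomial condition.

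The main obstacle is confirming $C_2$-equivariance at each step. Equivariance of the sphere bundle identification is Proposition \ref{hocolim sphere homeo}; the enriched Yoneda isomorphism is equivariant because the entire mapping-space calculus is performed in the $C_2\T$-enrichment; and equivariance of the $\hocolim$/$\holim$ adjunction follows from the end-formula discussion of Remark \ref{rem: C2 action on holim}, which records how a $C_2$-action on the indexing diagram induces a $C_2$-action on the end computing the homotopy limit. Once these equivariance checks are in hand, the proof reduces to Weiss's formal manipulation.
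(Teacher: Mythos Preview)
Your proposal is correct and follows exactly the approach the paper indicates: the paper's proof is a one-line deferral to \cite[Proposition 5.3]{We95} together with the $C_2$--equivariance of Proposition \ref{hocolim sphere homeo}, and you have simply unpacked that reference, identifying the sphere bundle as a homotopy colimit of representables, dualising to a homotopy limit via $\CE_0(-,F)$, and invoking enriched Yoneda. Your remark about a cofinality step is unnecessary here---the indexing posets already agree (the apparent mismatch between $\mathbf{R}_{\leq n}$ and the subspaces of $\C^{n+1}$ is a notational artefact in the paper, not a mathematical discrepancy)---but this does not affect the correctness of your argument.
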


We now move on to discussing the polynomial approximations of a functor. This definition is completely analogous to those from orthogonal and unitary calculus, see \cite[Definition 6.3]{BO13} or \cite[Definition 3.4]{Ta19}.

\begin{definition}
The \textit{$n$--polynomial approximation}, $T_nF$, of $F \in \CE_0$ is the homotopy colimit of the diagram
\[
\xymatrix{
F \ar[r]^\rho & \tau_nF \ar[r]^\rho & \tau_n^2 F \ar[r]^\rho & \cdots .
}
\]
\end{definition}

\begin{rem}
As an example of a coend, the homotopy colimit inherits a $C_2$--action by the dual construction to Remark \ref{rem: C2 action on holim}.
\end{rem}

The functor $T_nF$ is $n$--polynomial for all $F$. This is a key result, required to prove the existence of the $n$--polynomial model structure. In order to prove this result, we extend the erratum to orthogonal calculus, \cite{We98}, to the calculus with reality setting. To show that $T_nF$ is $n$--polynomial we show that $T_nF \longrightarrow \tau_n T_nF$ is a levelwise weak equivalence. The first step is the following. The proof of which follows from \cite[Lemma e.7, Diagram e.8 and Diagram e.9]{We98}.

\begin{lem}\label{factorising square diagram}
The commutative square
\begin{equation} \label{2}
\xymatrix{
F(V) \ar[r] \ar[d]_{\rho_F} & T_n F(V) \ar[d]^{\rho_{T_nF}} \\
\tau_nF(V) \ar[r] & \tau_n (T_n F)(V)
}
\end{equation}
can be enlarged to a commutative diagram

\begin{equation} \label{3}
\xymatrix{
F(V) \ar[r] \ar[d]_{\rho_F} &  X \ar[d]_g \ar[r] &T_n F(V) \ar[d]^{\rho_{T_nF}} \\
\tau_nF(V) \ar[r] & Y \ar[r] & \tau_n (T_n F)(V)
}
\end{equation}
where $g: X \longrightarrow Y$ is a weak homotopy equivalence.
\end{lem}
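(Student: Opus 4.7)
The plan is to mimic Weiss's erratum \cite{We98} to the $C_2$--equivariant setting. The obstruction to square \eqref{2} admitting a diagonal weak equivalence directly is that the homotopy limit $\tau_n$ does not strictly commute with the sequential homotopy colimit defining $T_n$; the erratum sidesteps this by producing the factorization via a pair of telescopes with a cofinal shift between them.

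Concretely, I would proceed as follows. Using the definition $T_nF(V) = \hocolim_k \tau_n^k F(V)$, define
\[
X := \hocolim_{k\geq 0} \tau_n^k F(V), \qquad Y := \hocolim_{k\geq 0} \tau_n^{k+1} F(V),
\]
where the structure maps of both telescopes are iterates of $\rho$. The map $F(V)\to X$ is the inclusion of the $k=0$ stage, and similarly $\tau_n F(V) \to Y$. The map $X \to T_n F(V)$ is the canonical identification, and $Y \to \tau_n(T_nF)(V)$ is produced by taking $\hocolim_k$ of the natural maps $\tau_n^{k+1}F(V) = \tau_n(\tau_n^k F)(V) \longrightarrow \tau_n(T_nF)(V)$ arising from $\tau_n^k F \to T_nF$. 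The middle map $g: X \to Y$ is induced by $\rho$ levelwise; since $Y$ is obtained from $X$ by forgetting the $k=0$ term of the telescope, $g$ is a weak homotopy equivalence by the usual cofinality argument for telescopes (as in \cite[Lemma~e.7]{We98}).

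Next I would verify that the resulting diagram \eqref{3} commutes. The left square commutes because the inclusions into the telescopes are compatible with $\rho_F$ at stage zero. The right square commutes because, on the top, it is just the structural inclusion $\tau_n^k F(V) \hookrightarrow T_nF(V)$ at $k=0$ followed by $\rho_{T_nF}$, while on the bottom it uses the same structural map after one application of $\tau_n$; both route to $\tau_n(T_nF)(V)$ via the naturality of $\rho$. Here the only thing to check carefully is that the comparison map $Y \to \tau_n(T_nF)(V)$ exists on the nose rather than up to homotopy, which follows from the fact that $\tau_n$ is a strict functor (a totalization / enriched end) and the telescope is a strict colimit of spaces.

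Finally, I would observe that the whole construction is $C_2$--equivariant: the sequence $F \xrightarrow{\rho} \tau_n F \xrightarrow{\rho} \tau_n^2 F \xrightarrow{\rho} \cdots$ consists of $C_2$--equivariant maps (by Remark \ref{rem: C2 action on holim}), and the mapping telescope is a $C_2$--coend construction, so $X$ and $Y$ inherit $C_2$--actions for which all four maps in the middle column and the comparison $g$ are $C_2$--equivariant. The main potential obstacle is bookkeeping: making sure the shift identification between the two telescopes is compatible with the $C_2$--actions on the $\tau_n^k F(V)$, but since $\rho$ itself is $C_2$--equivariant this amounts to checking that the shift $k\mapsto k+1$ respects the $C_2$--action stagewise, which is immediate.
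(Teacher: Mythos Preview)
Your approach is essentially the same as the paper's, which simply cites \cite[Lemma~e.7, Diagrams~e.8 and~e.9]{We98} without further elaboration; your proposal is a faithful unpacking of that citation and the $C_2$--equivariance check is exactly the right additional observation. One small correction: Lemma~e.7 in the erratum is not a cofinality statement---it provides the natural homotopy between the two maps $\rho_{\tau_nG}$ and $\tau_n(\rho_G)$ from $\tau_nG$ to $\tau_n^2G$, which is precisely what you need to make the ladder of squares (and hence the map $g$ and the comparison $Y\to\tau_n(T_nF)(V)$) commute coherently; the weak equivalence of the shifted telescope is then elementary.
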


Since $T_nF$ is a sequential homotopy colimit, and $\tau_n$ commutes with sequential homotopy colimits (see the proof of \cite[Lemma 5.14]{We95}), $\tau_n T_n F$ is levelwise equivalent to $T_n \tau_n F$. We now prove the required result.

\begin{lem}\label{Tn is n--polynomial}
If $F  \in \CE_0$, then $T_nF$ is $n$--polynomial.
\end{lem}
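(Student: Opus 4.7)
The plan is to verify that the canonical map $\rho_{T_nF}: T_nF(V) \longrightarrow \tau_n T_nF(V)$ is a weak equivalence in $C_2\T$ for each $V$, adapting the argument of Weiss's erratum \cite{We98}. The structural input is the commutation of $\tau_n$ with sequential homotopy colimits recorded just before the statement. From this, $\tau_n T_nF$ is levelwise equivalent to $T_n \tau_n F$, which is the homotopy colimit of the shifted sequence $\tau_n F \to \tau_n^2 F \to \cdots$. The shift-of-telescopes map $T_nF \to T_n\tau_n F$ is itself a levelwise equivalence by cofinality, so it is enough to identify $\rho_{T_nF}$ with this shift up to homotopy.

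To this end, I would apply Lemma \ref{factorising square diagram} with $F$ replaced by each iterate $\tau_n^k F$, producing for every $k \geq 0$ a factorisation of the commutative square relating $\tau_n^k F(V) \to T_nF(V)$ on top and $\tau_n^{k+1} F(V) \to \tau_n T_nF(V)$ on the bottom, through auxiliary spaces $X_k$ and $Y_k$ connected by a weak equivalence $g_k: X_k \longrightarrow Y_k$. These factorisations assemble into a commutative ladder indexed by $k$. Taking sequential homotopy colimit in $k$, the leftmost column becomes $T_nF(V) \to T_n\tau_n F(V)$, the rightmost column becomes $T_nF(V) \to \tau_n T_nF(V)$ (using the commutation of $\tau_n$ with sequential hocolim), and the middle column $\hocolim_k X_k \to \hocolim_k Y_k$ is a weak equivalence since sequential hocolim preserves levelwise weak equivalences. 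A diagram chase then identifies $\rho_{T_nF}$ with the shift equivalence up to weak equivalence, yielding the claim.

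The $C_2$-equivariance of the whole argument is automatic: every operation used, namely the $C_2\T$-enriched homotopy limit defining $\tau_n$, the sequential homotopy colimits, and the factorisation provided by Lemma \ref{factorising square diagram}, has already been shown to respect the $C_2$-action. The main obstacle, as in the orthogonal setting, is the discrepancy between the natural transformation $\rho$ applied to a sequential homotopy colimit and the shift map on that hocolim; the factorisation lemma borrowed from \cite{We98} is precisely what bridges this gap, and no new $C_2$-specific difficulty arises beyond what was already handled in Proposition \ref{hocolim sphere homeo} and the preceding lemma.
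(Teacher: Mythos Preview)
Your overall strategy matches the paper's: reduce to showing that the shift map $r\colon T_nF \to T_n\tau_nF$ is a levelwise weak equivalence, and invoke Lemma~\ref{factorising square diagram} applied to each iterate $\tau_n^kF$. The difference lies in how you conclude. You propose to assemble the factorisations $X_k \to Y_k$ into a ladder indexed by $k$ and take a sequential homotopy colimit, whereas the paper bypasses this entirely: it simply applies homotopy groups to the single factored square at each stage $k$ and runs a diagram chase to exhibit $\pi_*(r)$ as a bijection (any class in $\pi_*(T_n\tau_nF(V))$ is born at some finite stage, where it can be lifted through the weak equivalence $X_k\to Y_k$). Your route requires that the factorisations of Lemma~\ref{factorising square diagram} be natural in $F$, so that the $X_k$ and $Y_k$ really form compatible sequential diagrams; this is true for the explicit construction in \cite{We98}, but you have asserted it without comment. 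The paper's diagram-chase argument is slightly more economical precisely because it needs no such compatibility across~$k$.
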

\begin{proof}
It suffices to show that the vertical arrows in the diagram
\[
\xymatrix{
F(V) \ar[r]^{\rho} \ar[d]_{\rho} & \tau_n F(V) \ar[r]^\rho \ar[d]_\rho & \tau_n^2 F(V) \ar[r]^\rho \ar[d]_\rho & \dots \\
\tau_n F(V) \ar[r]_{\tau_n\rho} & \tau_n^2 F(V) \ar[r]_{\tau_n^2 \rho} & \tau_n^3 F(V) \ar[r]_{\tau_n^3 \rho} & \dots
}
\]
induce a weak homotopy equivalence, $r : T_n F \longrightarrow T_n \tau_n F$, between the homotopy colimits of the rows. For each $k \geq 0$, we have a commutative diagram
\[
\xymatrix{
\tau_n^kF(V) \ar@{^{(}->}[r] \ar[d]_{\rho}& T_nF(V) \ar[d]^{r} \\
\tau_n^{k+1} F(V) \ar@{^{(}->}[r] & T_n \tau_n F(V).
}
\]
Lemma \ref{factorising square diagram} gives a factorisation of this diagram
\begin{equation*}
\xymatrix{
\tau_n^kF(V) \ar[r] \ar[d]_{\rho}&  X \ar[r] \ar[d]  & T_nF(V) \ar[d]^{r} \\
\tau_n^{k+1} F(V) \ar[r] & Y \ar[r] & T_n \tau_n F(V).
}
\end{equation*}
where $X\longrightarrow Y$ is a weak equivalence. Applying homotopy groups yields a diagram of sets, and a diagram chase argument establishes the injectivity and surjectivity of $\pi_*(r)$. It follows that $r$ is a levelwise weak equivalence.
\end{proof}

As with the orthogonal and unitary calculi, polynomial functors satisfy many useful properties. For a full account see \cite{We95, BO13} and \cite{Ta19} for the orthogonal and unitary versions, respectively. Here we give the required properties to construct a suitable $n$--polynomial model structure for functors with reality. The following is the calculus with reality version of \cite[Lemma 5.11]{We95}, the proof of which is similar since homotopy limits commute in $C_2\T$. 

\begin{lem}\label{tau an n--poly}
If $F \in \CE_0$ is $m$--polynomial then $\tau_n E$ is $m$--polynomial for all $n\geq 0$.
\end{lem}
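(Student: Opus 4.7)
The plan is to mimic the proof of \cite[Lemma 5.11]{We95}, which relies on the fact that homotopy limits commute with one another (a ``Fubini theorem'' for homotopy limits), together with the homotopy invariance of $\holim$. The only additional content here is a verification that all of this is suitably $C_2$-equivariant, which will follow from the general functoriality of the enriched end observed in Remark \ref{rem: C2 action on holim}.

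First I would unpack the definition: for each $V \in \CJ$,
\[
\tau_m(\tau_n F)(V) \;=\; \holim_{U' \in \mathbf{R}_{\leq m}} \; \holim_{U \in \mathbf{R}_{\leq n}} F(V \oplus U' \oplus U).
\]
Since each of these homotopy limits is computed as a totalisation of a cosimplicial space and hence as an enriched end, the standard Fubini isomorphism for ends provides a $C_2$-homeomorphism
\[
\tau_m(\tau_n F)(V) \;\cong\; \holim_{U \in \mathbf{R}_{\leq n}} \; \holim_{U' \in \mathbf{R}_{\leq m}} F(V \oplus U \oplus U') \;=\; \holim_{U \in \mathbf{R}_{\leq n}} \tau_m F(V \oplus U).
\]
Under this identification, the canonical map $\rho_{\tau_n F} : \tau_n F(V) \longrightarrow \tau_m(\tau_n F)(V)$ is precisely the map induced on $\holim_{U \in \mathbf{R}_{\leq n}}$ by the natural transformation $\rho_F : F(V\oplus U) \longrightarrow \tau_m F(V \oplus U)$ evaluated at each $U$.

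Now, because $F$ is $m$-polynomial, each component $\rho_F : F(V\oplus U) \longrightarrow \tau_m F(V \oplus U)$ is a weak equivalence in $C_2\T$. The homotopy limit $\holim_{U \in \mathbf{R}_{\leq n}}$ preserves levelwise weak equivalences between diagrams (valued in the projective model category $C_2\T$, after replacing the diagrams by fibrant ones if necessary, which is harmless by Proposition \ref{relation of sphere bundle and holim} and the cellularity of the model structure). It follows that the map $\rho_{\tau_n F}$ is a weak equivalence, which is what we needed.

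The main thing to verify carefully is that the Fubini swap is genuinely $C_2$-equivariant. This is where one must appeal to Remark \ref{rem: C2 action on holim}: both sides carry the $C_2$-action induced from the diagonal action on the product defining the equaliser, and swapping the two indexing categories $\mathbf{R}_{\leq m}$ and $\mathbf{R}_{\leq n}$ merely reorders factors in that product. Since the $C_2$-actions on the categories $\mathbf{R}_{\leq m}, \mathbf{R}_{\leq n}$ (via complex conjugation on subspaces with a real basis) and on the functor $F$ are independent, the swap commutes with the $C_2$-action on the nose. This is the only step which genuinely uses the ``with reality'' flavour and will be the most tedious, though not conceptually difficult, point to write out.
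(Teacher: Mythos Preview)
Your proposal is correct and follows essentially the same approach as the paper: the paper's proof consists of a single sentence pointing to \cite[Lemma 5.11]{We95} and noting that homotopy limits commute in $C_2\T$, which is exactly the Fubini argument you have spelled out. One small quibble: your appeal to Proposition~\ref{relation of sphere bundle and holim} for homotopy invariance of $\holim$ is misplaced (that proposition concerns the sphere-bundle description of $\tau_n$, not invariance), but the invariance you need is standard and requires no special input.
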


\subsection{The $n$--polynomial model structure}

A key aspect of the work of Barnes and Oman \cite{BO13} is the $n$--polynomial model structure. This model structure captures the homotopy theory of $n$--polynomial functors -- they are the fibrant objects --  and the $n$--polynomial approximation functor is a fibrant replacement in this model structure. Since we are using the underlying model structure on $\CE_0$, producing the $n$--polynomial model structure for calculus with reality and follows from the orthogonal and unitary counterparts, \cite[Proposition 6.6]{BO13} and \cite[Proposition 3.9]{Ta19}.

\begin{prop}\label{prop: n--poly model structure}
There is a cellular, proper and topological model structure on the category $\CE_0$ with the weak equivalences those maps $f: E \longrightarrow F$ such that $T_nf: T_nE \longrightarrow T_nF$ is a weak equivalence in the underlying model structure on $\CE_0$. The fibrations are those maps $f: E \longrightarrow F$, which are levelwise fibrations in $\CE_0$ and the square
\[
\xymatrix{
E \ar[r] \ar[d]_f & T_nE \ar[d]^{T_nf} \\
F \ar[r] & T_nF
}
\]
is a homotopy pullback. The cofibrations of this model structure are the cofibrations of the underlying model structure on $\CE_0$. We call this the $n$--polynomial model structure and denote it by $n\poly\CE_0$.
\end{prop}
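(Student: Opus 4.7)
The plan is to obtain the $n$--polynomial model structure as a Bousfield--Friedlander localisation of the underlying levelwise projective model structure on $\CE_0$, with the pair $(T_n, \eta : \id \to T_n)$ as the localisation datum. This is exactly the strategy used in \cite[Proposition 6.6]{BO13} and \cite[Proposition 3.9]{Ta19}, and the real work is to verify that the three Bousfield--Friedlander axioms (A1)--(A3) hold for $(T_n, \eta)$ in the present $C_2$--equivariant setting.

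For (A1), I need $T_n$ to preserve levelwise weak equivalences. Since the chosen model structure on $C_2\T$ has underlying weak homotopy equivalences as its weak equivalences, homotopy limits of levelwise weak equivalences are again levelwise weak equivalences, so $\tau_n$ preserves them; sequential homotopy colimits preserve them as well, so $T_n = \hocolim_k \tau_n^k$ does too. For (A2), I need both $\eta_{T_n F}$ and $T_n \eta_F$ to be weak equivalences. The first is immediate from Lemma \ref{Tn is n--polynomial}, which states that $T_n F$ is $n$--polynomial. For the second, Lemma \ref{tau an n--poly} ensures that $\tau_n$ preserves the $m$--polynomial property along the defining tower of $T_n$, and then a two-out-of-three argument applied to the commuting diagram $\eta_{T_n F} \circ \eta_F = T_n \eta_F \circ \eta_F$ delivers the claim.

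The main obstacle is axiom (A3): the class of $T_n$--equivalences must be stable under pullback along $T_n$--fibrations. This reduces to verifying that $T_n$ commutes, up to levelwise weak equivalence, with the homotopy pullbacks that appear in the definition of $T_n$--fibrations. The two inputs are that $\tau_n$, being a homotopy limit indexed by the (homotopically) finite poset $\mathbf{R}_{\leq n}$, commutes with homotopy pullbacks, and that sequential homotopy colimits commute with homotopy pullbacks in a proper cellular model category, which $C_2\T$ (with the underlying model structure) is. Compared with the unitary argument of \cite{Ta19}, the only new point is to check that all intermediate identifications respect the $C_2$--action induced on ends and coends, as recorded in Remark \ref{rem: C2 action on holim}; this is routine but has to be tracked carefully.

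With (A1)--(A3) established, the Bousfield--Friedlander machinery produces the model structure with weak equivalences, cofibrations and fibrations as stated. Cellularity and topological enrichment transfer directly from the underlying model structure on $\CE_0$, left properness is automatic since the cofibrations are unchanged from the projective structure, and right properness follows from (A3) combined with the homotopy--pullback characterisation of $T_n$--fibrations.
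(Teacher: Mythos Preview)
Your approach via the Bousfield--Friedlander axioms is essentially the same as the paper's: the paper simply cites \cite[Theorem 9.3]{Bo01}, which is precisely the Bousfield--Friedlander machinery, to obtain existence, properness and the topological enrichment. So for those properties your outline is fine and matches the paper.

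There is, however, a genuine gap in your treatment of cellularity. Bousfield--Friedlander localisation does not, on its own, produce a cellular model structure: it gives you the weak equivalences, cofibrations and fibrations, but no explicit set of generating acyclic cofibrations. The paper closes this gap by identifying the $T_n$--local model structure with the left Bousfield localisation of the projective model structure at the explicit set
\[
\mathcal{S}_n = \{\, S\Cgamma_{n+1}(V,-)_+ \longrightarrow \CJ_0(V,-) \mid V \in \CJ_0 \,\},
\]
using Proposition~\ref{relation of sphere bundle and holim} to show that $\mathcal{S}_n$--local objects coincide with $n$--polynomial functors and hence that $\mathcal{S}_n$--local equivalences coincide with $T_n$--equivalences. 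Cellularity then follows from Hirschhorn's general results on left Bousfield localisation of cellular model categories. This identification is not incidental: it is used later (e.g.\ in the construction of the $n$--homogeneous model structure, Proposition~\ref{prop: homogeneous model structure with reality}) and is what makes Corollary~\ref{domains and codomains of S_n are cofibrant} relevant. Your assertion that ``cellularity \ldots\ transfer[s] directly from the underlying model structure'' is not justified without this step.

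A minor technical point: your two--out--of--three argument for $T_n\eta_F$ being a weak equivalence does not quite work as written, since the equation $\eta_{T_nF}\circ\eta_F = T_n\eta_F\circ\eta_F$ does not let you cancel $\eta_F$. The standard fix is a cofinality argument on the sequential homotopy colimit defining $T_n$, as in \cite{BO13,Ta19}.
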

\begin{proof}
The polynomial approximation functor satisfies the required conditions of \cite[Theorem 9.3]{Bo01}, and hence the model structure exists, and is proper and topological. Moreover, one can show similarly to \cite[Proposition 6.6]{BO13} and \cite[Proposition 3.9]{Ta19}, that this model structure is the left Bousfield localisation of the underlying model structure on $\CE_0$ at the set of maps
\[
\mathcal{S}_n =\{ S\Cgamma_{n+1}(V,-)_+ \longrightarrow \CJ_0(V,-) \mid V \in \CJ_0\}.
\]
Indeed, since the cofibrations in the $T_n$--local model structure agree with the cofibrations of the left Bousfield localised model structure, it suffices to show that a map is a $T_n$--equivalence if and only if it is a $\mathcal{S}_n$--local equivalence. As the weak equivalences of the projective model structure are the underlying weak equivalences, the proof follows as in \cite[Proposition 6.6]{BO13} and \cite[Proposition 3.9]{Ta19}.
\end{proof}

\section{Homogeneous Functors with Reality}\label{section: homogeneous functors}

As with the orthogonal and unitary calculus, there are maps $T_nF \longrightarrow T_{n-1}F$, which assemble into a Taylor tower under the functor $F \in \CE_0$

\[
\xymatrix@C+1cm{
		&			&	\ar@/_1pc/[dl]	F \ar[d]	  \ar@/^1pc/[drr]  \ar@/^1.3pc/[drrr]   &			     	&			& \\
 \cdots \ar[r] & T_{n+1}F \ar[r]_{r_{n+1}} & T_nF \ar[r]_{r_n} & \cdots \ar[r]_{r_2} & T_1F \ar[r]_{r_1} & T_0F. \\
 & D_{n+1}F \ar[u] & D_nF \ar[u] & & D_1F \ar[u] &\\
}
\]
In order to obtain information about the functor $F$ from this tower, we would like to be able to compute the layers. Indeed, there is a spectral sequence, the \textit{Weiss spectral sequence} associated to $F$ at $V \in \CJ_0$. This is the homotopy spectral sequence of the tower of pointed spaces $\{T_nF\}_{n \in \mathbb{N}_0}$ with $E^1$--page
\[
E^1_{p,q} = \pi_{q-p} D_{p}F(V)
\]
which abuts to $\pi_* \underset{n \in \mathbb{N}_0}{\holim} F(V)$.

The $n$--th layer of the Taylor tower satisfies the property that it is both $n$--polynomial and has trivial $(n-1)$--polynomial approximation. 
Functors with these properties are called $n$--homogeneous.

\begin{definition}
A functor with reality is said to be \textit{$n$--reduced} if it has trivial $(n-1)$--polynomial approximation. A functor $F \in \CE_0$ is \textit{homogeneous of degree less than or equal $n$} or \textit{$n$--homogeneous} if it is $n$--polynomial and its $(n-1)$--polynomial approximation is trivial.
\end{definition}

\subsection{Homogeneous model structure}
A right Bousfield localisation of the $n$--polynomial model structure, as with \cite[Proposition 6.9]{BO13} and \cite[Proposition 4.13]{Ta19} produces a model structure which captures the homotopy theory of functors which are homogeneous of degree less than or equal $n$.

The weak equivalences of this model structure are defined via the derivatives, see Definition \ref{def: derivative}. By \cite[Proposition 8.2]{Ta19}, we could also define the model structure using the $D_n$--equivalences, that is, those maps $f: E \longrightarrow F$ in $\CE_0$ such that $D_nf: D_n E \longrightarrow D_nF$ is  a levelwise weak equivalence. 

\begin{prop}\label{prop: homogeneous model structure with reality}
There is a topological model structure on $\CE_0$ where the weak equivalences are those maps $f$ such that $\ind_0^n
T_nf$ is a weak equivalence in $\CE_0$, the fibrations are the fibrations of the $n$--polynomial model structure and the cofibrations are those maps with the left lifting property with respect to the acyclic fibrations. The fibrant objects are $n$--polynomial and the cofibrant--fibrant objects are the projectively cofibrant $n$--homogeneous functors. We denote this model category by $n\homog\E_0$. 
\end{prop}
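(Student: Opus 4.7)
The plan is to construct the desired model structure as a right Bousfield localisation of the $n$--polynomial model structure of Proposition \ref{prop: n--poly model structure}, following the blueprint of \cite[Proposition 6.9]{BO13} and \cite[Proposition 4.13]{Ta19}. The first step is to record that the $n$--polynomial model structure is cellular and right proper: cellularity is preserved under the left Bousfield localisation used in its construction, and right properness follows from the right properness of the underlying projective structure on $\CE_0$ together with the behaviour of $T_n$ on the homotopy pullback squares that cut out the fibrations.

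The second step is to choose a set $K$ of cofibrant objects whose $K$--colocal equivalences (relative to the $n$--polynomial model structure) coincide with the maps $f$ for which $\ind_0^n T_n f$ is a weak equivalence in $\CE_0$. Natural candidates are (smash products with cells of) the representables $\CJ_n(V,-)$ for $V \in \CJ_0$; these are cofibrant in $\CE_0$ by the argument of Corollary \ref{domains and codomains of S_n are cofibrant}. With such $K$ in hand, Hirschhorn's theorem on right Bousfield localisation at a set of objects (\cite[Theorem 5.1.1]{Hi03}) produces a cellular, right proper model structure with fibrations those of the $n$--polynomial structure, weak equivalences the $K$--colocal equivalences, and cofibrations determined by lifting. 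Topologicality is inherited from the $n$--polynomial structure.

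The main obstacle is identifying the $K$--colocal equivalences with the $\ind_0^n T_n$--equivalences. One approach is to argue that derived mapping spaces out of $\CJ_n(V,-)$ into $n$--polynomial replacements compute, up to homotopy, the value at $V$ of $\ind_0^n T_n F$; this is essentially the content of the adjunction developed in Section \ref{section: diff as QF}. Alternatively, one can follow the strategy of \cite[Proposition 8.2]{Ta19} to identify these equivalences with $D_n$--equivalences and then show that the two notions agree.

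Finally, the characterisation of the fibrant and cofibrant--fibrant objects is essentially formal. Fibrant objects are $n$--polynomial by construction. A cofibrant--fibrant object $F$ is projectively cofibrant, since cofibrations of the colocalisation lie among the cofibrations of the underlying structure. Moreover the $K$--colocal cofibrant condition forces $T_{n-1}F \simeq \ast$, so $F$ is $n$--homogeneous; conversely, any projectively cofibrant $n$--homogeneous functor is automatically $K$--cofibrant, since the $K$--colocal equivalences out of such an object reduce to ordinary weak equivalences in $\CE_0$.
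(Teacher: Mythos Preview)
Your proposal is correct and follows essentially the same approach as the paper: both right Bousfield localise $n\poly\CE_0$ at the set $\mathcal{K}_n = \{\CJ_n(V,-) \mid V \in \CJ_0\}$ and identify the $\mathcal{K}_n$--colocal equivalences with the $\ind_0^n T_n$--equivalences. The paper makes this identification in one line via the equality $(\ind_0^n T_n E)(V) = \CE_0(\CJ_n(V,-), T_nE)$, which is precisely the ``derived mapping spaces out of $\CJ_n(V,-)$'' argument you sketch; your additional remarks on cellularity, right properness, and the characterisation of cofibrant--fibrant objects are left implicit in the paper but are justified by the references you cite.
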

\begin{proof}
The $n$--polynomial model structure is cellular and right proper, Proposition \ref{prop: n--poly model structure}, hence by \cite[Theorem 5.1.1.]{Hi03}, we can right Bousfield localise $n\poly\CE_0$ at the set of objects,
\[
\mathcal{K}_n = \{ \CJ_n(V,-) \mid V \in \CJ_0 \}
\]
to achieve the stated model structure. Indeed, a map $f: E \longrightarrow F$ is a $\mathcal{K}_n$--equivalence if and only
\[
(\ind_0^n T_n E)(V) = \CE_0(\CJ_n(V,-), T_nE) \longrightarrow \CE_0(\CJ_n(V,-), T_n F)= (\ind_0^n T_nF)(V)
\]
is a weak homotopy equivalence. It follows that $f$ is a weak equivalence in $R_{K_n}(n\poly\CE_0)$ if and only if $\res_0^n\ind_0^n T_nf$ is a weak equivalence in $\CE_0$.
\end{proof}

In Section \ref{section: homog with reality}, we give a characterisation of homogeneous functors in terms of spectra, similar to those for orthogonal \cite[Theorem 7.3]{We95} and unitary \cite[Theorem 8.1]{Ta19} calculi. This characterisation is both computationally friendly and describes the homotopy theory of $n$--homogeneous functors in terms of spectra. This is advantageous as the category of spectra is well understood.

\section{Derivatives of Functors with Reality}\label{section: derivative}
The other crucial ingredient in a theory of calculus is that of derivatives. Following the constructions of the derivatives in orthogonal and unitary calculus, \cite[Section 2]{We95}, \cite[Section 4]{BO13} \cite[Section 4]{Ta19}, we construct the derivatives of a functor with reality, utilising the Real Stiefel combinatorics developed in Section \ref{section: equivariant stiefel combinatorics}.  
There is a strong analogy between the derivatives of a functor, and the derivatives of a function. For example, both compute the errors between successive polynomial approximations. Moreover, the derivative of any functor may be converted to a spectrum with an appropriate group action. This conversion of the derivative into a spectrum formed one aspect of the zig--zag of Quillen equivalences for orthogonal and unitary calculi, \cite[Proposition 8.3]{BO13}, \cite[Corollary 6.5, Proposition 6.7]{Ta19}. In the `with reality' setting, more work must be done to give a description of the derivative as a spectrum, and forms two steps in the three--step zig--zag of Quillen equivalences.

\subsection{The derivative}
In the orthogonal and unitary calculus the derivative of a functor is constructed as a left Kan extension along a particular inclusion of categories. We follow a similar procedure here and define the derivative in this way. We first define categories, $\CE_n$, for all $n \geq 0$, which should be thought of as the unitary calculus categories with interwoven $C_2$--action coming from complex conjugation.

\begin{definition}
For all $n \geq 0$, define $\CE_n$ to be the category of $C_2\T$--enriched functors from $\CJ_n$ to $C_2\T$.
\end{definition}

For $m< n$, the inclusion $i_m^n : \C^m \longrightarrow \C^n$ onto the first $m$--components is $C_2$--equivariant and induces a $C_2\T$--enriched functor $i_m^n : \CJ_m \longrightarrow \CJ_n$. On the level of vector bundles, the map is given by
\[
i_m^n : \Cgamma_n(U,V) \longrightarrow \Cgamma_{n+1}(U,V), \quad (f,x) \mapsto (f, i_m^n \otimes \id (x)),
\]
which is $C_2$--equivariant.

\begin{definition}\label{def: derivative}
Define the \textit{restriction functor} $\res_m^n : \CE_n \longrightarrow \CE_m$ by precomposition with $i_m^n$. Define the \textit{induction functor} $\ind_m^n : \CE_m \longrightarrow \CE_m$ to be the (enriched) left Kan extension along $i_m^n$. When $m=0$, this induction functor defines the \textit{$n$--th derivative} of a functor with reality.
\end{definition}

The definition of derivative is not amenable to calculations. The following gives the derivative as a particular homotopy fibre, allowing for calculations in some cases.

\begin{prop}\label{fibre sequence for derivative}
For $F \in \CE_n$, there is a homotopy fibre sequence
\[
\res_n^{n+1}\ind_n^{n+1} F(V) \longrightarrow F(V) \longrightarrow \Omega^{2n}F(V \oplus \C)
\]
in $C_2\T$ for all $V \in \J_n$.
\end{prop}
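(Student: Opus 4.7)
The plan is to rewrite the cofibre sequence of Proposition~\ref{prop: cofibre sequence of jet categories} as a cofibre sequence of representable functors in $\CE_n$, and then apply the enriched natural transformation functor $\nat_{\CE_n}(-, F)$ to convert it into the desired homotopy fibre sequence in $C_2\T$.

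First, I would fix $V \in \CJ_n$ and view Proposition~\ref{prop: cofibre sequence of jet categories} (with the second argument varying) as a cofibre sequence of representable objects in $\CE_n$:
\[
\CJ_n(V \oplus \C, -) \wedge S^{2n} \longrightarrow \CJ_n(V, -) \longrightarrow \res_n^{n+1} \CJ_{n+1}(V, -).
\]
Corollary~\ref{domains and codomains of S_n are cofibrant} ensures all three terms are projectively cofibrant in $\CE_n$: the first two as smash products of representables with cofibrant pointed $C_2$--spaces, and the last as the reduced mapping cone of a map between cofibrant objects.

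Second, I would apply the contravariant functor $\nat_{\CE_n}(-, F)$, after replacing $F$ fibrantly if necessary, which sends cofibre sequences of cofibrant objects to homotopy fibre sequences in $C_2\T$. The enriched Yoneda lemma identifies the middle and rightmost terms as $F(V)$ and $\Omega^{2n} F(V \oplus \C)$ respectively, yielding the homotopy fibre sequence
\[
\nat_{\CE_n}(\res_n^{n+1} \CJ_{n+1}(V, -), F) \longrightarrow F(V) \longrightarrow \Omega^{2n} F(V \oplus \C).
\]

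Finally, I would identify the fibre with $\res_n^{n+1} \ind_n^{n+1} F(V)$. The key input is that left Kan extensions carry representables to representables: the universal property of $\ind_n^{n+1}$ yields $\ind_n^{n+1} \CJ_n(V,-) \cong \CJ_{n+1}(V,-)$ in $\CE_{n+1}$. Combined with the enriched Yoneda lemma in $\CE_{n+1}$ applied to $\ind_n^{n+1} F$ at $V$ and the $(\ind_n^{n+1}, \res_n^{n+1})$ adjunction, this identifies the natural transformation space on the left with $\res_n^{n+1} \ind_n^{n+1} F(V)$. The main obstacle I anticipate is this final identification: while the preservation of representables by the left Kan extension is formal, carefully tracking the $C_2\T$--enrichment through the adjunction so that the identification holds at the level of enriched natural transformation spaces (rather than just of sets) requires delicate bookkeeping. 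The other steps are formal consequences of Proposition~\ref{prop: cofibre sequence of jet categories}, the enriched Yoneda lemma, and the standard suspension--loop adjunction in $C_2\T$.
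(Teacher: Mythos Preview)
Your overall strategy is exactly what the paper does (by reference to \cite[Lemma~4.6]{BO13}): turn the cofibre sequence of Proposition~\ref{prop: cofibre sequence of jet categories} into a cofibre sequence of representable objects in $\CE_n$, apply the enriched mapping functor $\nat_{\CE_n}(-,F)$, and identify the three terms. The first two identifications via enriched Yoneda are fine.

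The final identification, however, does not go through as you have written it. The induction functor $\ind_n^{n+1}$ is the \emph{right} adjoint to $\res_n^{n+1}$ (i.e.\ the right Kan extension along $i_n^{n+1}$), not the left; the word ``left'' in Definition~\ref{def: derivative} is a slip, as you can see from the adjunction displayed in \S\ref{subsection: intermediate category}, from the explicit mapping--space formula for $\ind_m^n CI$, and indeed from the very fact that the sequence in question is a fibre sequence rather than a cofibre sequence. With that correction, the identification is immediate: the $(\res_n^{n+1},\ind_n^{n+1})$ adjunction and Yoneda in $\CE_{n+1}$ give
\[
\nat_{\CE_n}\bigl(\res_n^{n+1}\CJ_{n+1}(V,-),\,F\bigr)\;\cong\;\nat_{\CE_{n+1}}\bigl(\CJ_{n+1}(V,-),\,\ind_n^{n+1}F\bigr)\;\cong\;\ind_n^{n+1}F(V),
\]
which is equivalently just the end formula for the right Kan extension. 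Your route via ``left Kan extensions preserve representables'' together with the $(\ind_n^{n+1},\res_n^{n+1})$ adjunction only yields the tautology $\ind_n^{n+1}F(V)\cong\res_n^{n+1}\ind_n^{n+1}F(V)$ and never touches the term $\nat_{\CE_n}(\res_n^{n+1}\CJ_{n+1}(V,-),F)$ you are trying to compute. Once you swap to the correct adjunction direction, the argument is complete and matches the paper's.
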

\begin{proof}
This follows from \cite[Lemma 4.6]{BO13} upon noting the $C_2$--equivariance of Proposition \ref{prop: cofibre sequence of jet categories}.
\end{proof}

In both the orthogonal and unitary calculi, relating polynomial functors to derivatives through fibres sequences was incredibly useful for computations of the derivatives and proving the zig--zag of Quillen equivalences. The first step in this process is the following $C_2$--equivariant homotopy cofibre sequence.

\begin{lem}\label{sphere bundle cofibre sequence}
There is a cofibre sequence
\[
S\Cgamma_{n+1}(V,W)_+ \xrightarrow{p_1} \CJ_0(V,W) \longrightarrow \CJ_{n+1}(V,W)
\]
in $C_2\T$ where $p_1$ is the projection onto the first component of $S\gamma_{n+1}(V,W)$.
\end{lem}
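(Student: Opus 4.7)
The plan is to realize the claimed sequence as the canonical Thom-space cofibre sequence of the $C_2$-equivariant complement bundle $\Cgamma_{n+1}(V,W) \to \CJ(V,W)$, and then verify that every step in the standard argument is compatible with the $C_2$-actions coming from complex conjugation.

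First, I would recall that by the very definition given in Section \ref{section: equivariant stiefel combinatorics}, the morphism space $\CJ_{n+1}(V,W)$ is the Thom space of the bundle $\Cgamma_{n+1}(V,W) \to \CJ(V,W)$. Applying the standard description of a Thom space as the mapping cone of the projection $S(E)_+ \to B_+$ from the sphere bundle to the base (obtained by first forming the cofibre of $S(E) \hookrightarrow D(E)$ and then using the deformation retraction of the disk bundle onto the base), one gets in $\T$ a cofibre sequence
\[
S\Cgamma_{n+1}(V,W)_+ \xrightarrow{p_1} \CJ(V,W)_+ \longrightarrow \CJ_{n+1}(V,W).
\]
By definition $\CJ(V,W)_+ = \CJ_0(V,W)$, so this is exactly the non-equivariant version of the sequence stated in the lemma.

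Second, I would verify the $C_2$-equivariance. By construction, the total space $\Cgamma_{n+1}(V,W)$ inherits its $C_2$-action as a subspace of $\CJ(V,W) \times (\C^{n+1} \otimes W)$ with the diagonal action, and the bundle projection is $C_2$-equivariant. Since complex conjugation on $\C^{n+1} \otimes W$ is an isometry for the Hermitian inner product, the fibrewise unit sphere and unit disk are both preserved set-wise by $C_2$, so $S\Cgamma_{n+1}(V,W)$ and $D\Cgamma_{n+1}(V,W)$ are $C_2$-subspaces. The deformation retraction $D\Cgamma_{n+1}(V,W) \to \CJ(V,W)$ is defined by fibrewise linear scaling, which commutes with the linear $C_2$-action; hence all the maps in the cofibre sequence are $C_2$-equivariant, and the sequence lives in $C_2\T$.

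There is no substantive obstacle: the only potential pitfall is confirming that the ambient $C_2$-action does preserve the sphere and disk bundles fibrewise (so that the Thom-space cofibre sequence can be promoted to $C_2\T$), and this is immediate from the fact that complex conjugation is unitary. Alternatively, one may simply observe that the result is the $C_2$-equivariant version of \cite[Proposition 1.2]{We95} or \cite[Lemma 4.3]{Ta19}, and reuse the proof of Proposition \ref{prop: cofibre sequence of jet categories} to track the $C_2$-action through the identification of the cofibre.
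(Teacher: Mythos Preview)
Your proposal is correct and follows essentially the same approach as the paper: both identify the cofibre with the Thom space of $\Cgamma_{n+1}(V,W)$ and then check $C_2$-equivariance. The only difference is presentational---the paper writes down the explicit homeomorphism $\Phi(f,x,t)=(f,xt)$ from the mapping cone (parametrised by $t\in[0,\infty]$) to $\CJ_{n+1}(V,W)$, whereas you invoke the standard disk/sphere-bundle description of a Thom space and verify that complex conjugation, being an isometry, preserves the fibrewise norm and hence the sphere and disk bundles.

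One small caveat: your closing ``alternatively'' sentence points to \cite[Proposition~1.2]{We95} and to Proposition~\ref{prop: cofibre sequence of jet categories}, but those concern a \emph{different} cofibre sequence (the one with source $\CJ_n(\C\oplus U,V)\wedge S^{2n}$), not the sphere-bundle sequence of this lemma. That aside does not affect your main argument, which stands on its own.
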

\begin{proof}
The mapping cone of $S\gamma_{n+1}(V,W)_+ \xrightarrow{p_1} \CJ_0(V,W)$ is the pushout
\[
\xymatrix{
S\Cgamma_{n+1}(V,W)_+ \ar[r]^{p_1} \ar[d] & \CJ_0(V,W) \ar[d] \\
S\Cgamma_{n+1}(V,W)_+ \wedge [0, \infty] \ar[r] & \mathcal{P}
}
\]
where we use $[0, \infty] = [0, \infty)^c$ (with basepoint $\infty$). This pushout consists of $3$--tuples $(f,x,t)$ for $t \in [0, \infty]$ and $(f,x) \in S\Cgamma_{n+1}(V,W)_+$ modulo the relations
\[
\begin{split}
(f,x,\infty) &= (f',x', \infty) \\
(f,x,0) &= (f',x', 0). \\
\end{split}
\]
The required map $\Phi: \mathcal{P} \longrightarrow \CJ_{n+1}(V,W)$ is given (away from the basepoint) by $\Phi(f,x,t) = (f,xt)$. This map is a $C_2$--equivariant homeomorphism.
\end{proof}

We can now state the desired fibre sequence, which gives a measure of the failure of a functor from being polynomial in terms of the derivative.

\begin{prop}\label{fibre sequence for polynomials}
For $F \in \CE_0$, there is a homotopy fibre sequence
\[
\res_0^{n+1} \ind_0^{n+1}F(V) \longrightarrow F(V) \longrightarrow \tau_n F(V)
\]
in $C_2\T$.
\end{prop}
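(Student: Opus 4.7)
The plan is to apply the derived mapping space $\Map_{\CE_0}(-,F)$ to the $C_2$--equivariant cofibre sequence
\[
S\Cgamma_{n+1}(V,-)_+ \xrightarrow{p_1} \CJ_0(V,-) \longrightarrow \CJ_{n+1}(V,-)
\]
from Lemma \ref{sphere bundle cofibre sequence}. Because the three objects appearing are cofibrant in $\CE_0$ by Corollary \ref{domains and codomains of S_n are cofibrant}, this produces a genuine homotopy fibre sequence in $C_2\T$:
\[
\Map_{\CE_0}(\CJ_{n+1}(V,-), F) \longrightarrow \Map_{\CE_0}(\CJ_0(V,-), F) \longrightarrow \Map_{\CE_0}(S\Cgamma_{n+1}(V,-)_+, F).
\]
It then remains to identify the three terms with those appearing in the statement of the proposition.

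The middle term is $F(V)$ by the enriched Yoneda lemma. For the right-hand term, I would use Proposition \ref{hocolim sphere homeo} to rewrite $S\Cgamma_{n+1}(V,-)_+$ as a $C_2$--equivariant homotopy colimit of representables $\CJ_0(U\oplus V,-)_+$, and then commute $\Map_{\CE_0}(-,F)$ past this homotopy colimit. One application of Yoneda then produces $\holim_U F(U \oplus V) = \tau_n F(V)$.

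The main obstacle is identifying the fibre term $\Map_{\CE_0}(\CJ_{n+1}(V,-),F)$ with the evaluation $\res_0^{n+1}\ind_0^{n+1}F(V)$ of the $(n{+}1)$--st derivative. The argument uses the adjunction $\ind_0^{n+1} \dashv \res_0^{n+1}$ paired with the observation, obtained from the coend formula for left Kan extensions together with the co-Yoneda lemma, that $\ind_0^{n+1}\CJ_0(V,-) = \CJ_{n+1}(V,-)$. Feeding this through the adjunction and applying Yoneda in $\CE_{n+1}$ recovers the derivative evaluated at $V$. This is the only step requiring real work, as the other two identifications are essentially formal.

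Each of the above identifications preserves the $C_2$--action, inherited from the $C_2$--equivariance verified in Proposition \ref{hocolim sphere homeo} and Lemma \ref{sphere bundle cofibre sequence}. The overall structure is exactly the orthogonal and unitary analogue of \cite[Lemma 4.6]{BO13} and its counterpart in \cite{Ta19}, and as with the proof of Proposition \ref{fibre sequence for derivative}, the entire content of the argument beyond those known cases is the bookkeeping of the $C_2$--action, which is immediate from the $C_2$--equivariant inputs.
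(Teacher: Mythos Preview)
Your approach is essentially the paper's: apply $\CE_0(-,F)$ to the cofibre sequence of Lemma~\ref{sphere bundle cofibre sequence} and identify the three resulting terms. The paper compresses the identifications into a single line, citing ``definitions, the Yoneda Lemma and Proposition~\ref{relation of sphere bundle and holim}'' (the latter being the packaged form of your appeal to Proposition~\ref{hocolim sphere homeo}).

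There is, however, a genuine confusion in your identification of the fibre. You write the adjunction as $\ind_0^{n+1} \dashv \res_0^{n+1}$ and invoke the coend formula for left Kan extensions, but throughout the paper $\ind_m^n$ is the \emph{right} adjoint to $\res_m^n$, i.e.\ the right Kan extension along $i_m^n$. This is visible in the formula $(\ind_0^n E)(V) = \CE_0(\CJ_n(V,-), E)$ used in the proof of Proposition~\ref{prop: homogeneous model structure with reality} and in Section~\ref{subsection: intermediate category}, and it is the only reading under which $\ind_0^n\varepsilon^*$ sits on the right of the Quillen pair $(\res_0^n/\U(n),\,\ind_0^n\varepsilon^*)$. (The word ``left'' in Definition~\ref{def: derivative} appears to be a typo.) With the correct direction the fibre identification is immediate: $\res_0^{n+1}\ind_0^{n+1}F(V) = \CE_0(\CJ_{n+1}(V,-), F)$ is just the end formula for the right Kan extension, or equivalently follows from $\res_0^{n+1} \dashv \ind_0^{n+1}$ together with Yoneda in $\CE_{n+1}$. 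Your detour through the identity $\ind_0^{n+1}\CJ_0(V,-) = \CJ_{n+1}(V,-)$ is both unnecessary and, with the adjunction written the wrong way round, cannot be fed through to produce $\ind_0^{n+1}F$ on the other side.
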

\begin{proof}
By Lemma \ref{sphere bundle cofibre sequence}, there is a cofibre sequence
\[
S\Cgamma_{n+1}(V,-)_+ \xrightarrow{p_1} \CJ_0(V,-) \longrightarrow \CJ_{n+1}(V,-).
\]
Applying the functor $\CE_0(-,F)$ we obtain a homotopy fibre sequence
\[
\CE_0(\CJ_{n+1}(V,-), F) \longrightarrow \CE_0(\CJ_0(V,-), F) \longrightarrow \CE_0(S\Cgamma_{n+1}(V,-), F)
\]
which reduces via definitions, the Yoneda Lemma and Proposition \ref{relation of sphere bundle and holim} to give a homotopy fibre sequence
\[
\ind_0^n F(V) \longrightarrow F(V) \longrightarrow \tau_nF(V). \qedhere
\]
\end{proof}

As a corollary we see that the $(n+1)$--st derivative of an $n$--polynomial functor is trivial.

\begin{cor}\label{higher derivative vanishes}
If $F \in \CE_0$ is $n$--polynomial, then $\res_0^n\ind_0^n F$ is levelwise weakly contractible.
\end{cor}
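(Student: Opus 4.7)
The plan is to invoke Proposition \ref{fibre sequence for polynomials} directly and read off the conclusion. That proposition supplies, for every $V \in \CJ_0$, a $C_2\T$--homotopy fibre sequence of the form
\[
\text{(derivative of $F$ at $V$)} \longrightarrow F(V) \xrightarrow{\ \rho\ } \tau_n F(V),
\]
and the hypothesis that $F$ is $n$--polynomial is, by definition, precisely the statement that the comparison map $\rho$ is a $C_2\T$--weak equivalence at every $V \in \CJ_0$. Since the homotopy fibre of a weak equivalence in a proper topological model category is weakly contractible, the derivative term is weakly contractible in $C_2\T$ at each $V$, which is the levelwise contractibility of $\res_0^n \ind_0^n F$ as an object of $\CE_0$.

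To make the contractibility step fully $C_2$--equivariant, I would run the long exact sequence of homotopy groups associated to the fibre sequence, both on the underlying spaces and on the $C_2$--fixed points. A $C_2\T$--weak equivalence induces isomorphisms on both the underlying homotopy groups and on the fixed-point homotopy groups of its base, forcing both the underlying and the fixed-point homotopy groups of the fibre to vanish. In the Quillen model structure on $C_2\T$ that we have fixed (weak equivalences detected on underlying spaces), this is already enough; but verifying it on fixed points as well shows that the contraction persists even if one later transfers along the fixed-points adjunction.

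The only bookkeeping required is the identification of the fibre appearing in Proposition \ref{fibre sequence for polynomials} with $\res_0^n \ind_0^n F(V)$ (with the indexing convention of the derivative as used in the corollary) as a $C_2\T$--enriched functor carrying the correct $C_2$--action. This is implicit in the proof of Proposition \ref{fibre sequence for polynomials}: one applies $\CE_0(-,F)$ to the $C_2$--equivariant cofibre sequence of Lemma \ref{sphere bundle cofibre sequence}, and then rewrites the terms using the Yoneda lemma and the $C_2$--equivariant homotopy colimit description of $S\Cgamma_{n+1}(V,-)$ provided by Proposition \ref{hocolim sphere homeo}. Each of these identifications is $C_2$--equivariant by construction, so no additional check is needed, and the main obstacle — setting up $C_2$--equivariance of the $S\Cgamma$--identification — has already been discharged in Section \ref{section: equivariant stiefel combinatorics}.
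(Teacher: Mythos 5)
Your main argument is exactly the one the paper intends: the corollary is stated immediately after Proposition \ref{fibre sequence for polynomials} with no separate proof, and reading off the fibre of the levelwise weak equivalence $\rho\colon F(V)\to\tau_nF(V)$ is the whole content. (There is a small indexing wobble in the source — the proposition is stated with $\res_0^{n+1}\ind_0^{n+1}F$, the surrounding prose says ``the $(n+1)$--st derivative,'' and the corollary is printed with $\res_0^n\ind_0^nF$; you sensibly side-step this by writing ``the derivative of $F$ at $V$,'' and the argument is unaffected.)

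However, the aside in your second paragraph contains a genuine error. You claim that a weak equivalence in the chosen model structure on $C_2\T$ ``induces isomorphisms on \dots the fixed-point homotopy groups of its base,'' and you use this to conclude that the fixed-point homotopy groups of the fibre vanish. That is false for the model structure the paper fixes on $C_2\T$: it is transferred along $i^*\colon C_2\T\to\T$, so weak equivalences are detected \emph{only} on underlying spaces, not on $C_2$--fixed points. The standard counterexample is $EC_2\to *$, an underlying weak equivalence whose $C_2$--fixed points are $\emptyset\to *$. In particular, knowing that $\rho$ is a weak equivalence in this sense gives you no control whatsoever on $\rho^{C_2}$, so there is no reason for the fixed points of $\res_0^n\ind_0^nF(V)$ to be contractible, and your sentence ``verifying it on fixed points as well shows that the contraction persists even if one later transfers along the fixed-points adjunction'' is unsupported. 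This does not damage the corollary itself, which only asserts levelwise weak contractibility in the underlying sense — precisely as your first paragraph proves — but the fixed-point claim should be deleted rather than presented as a strengthening.
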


A useful result for showing an object is $n$--polynomial is the following result relating $n$--polynomial objects and homotopy fibres. The proof of which is an application of the Five Lemma. In particular the following result gives that the homotopy fibre of a map between $n$--polynomial objects is $n$--polynomial. 

\begin{lem}[{\cite[Lemma 5.5]{We95}}]\label{hofibre lemma for polynomials} \label{hofibre of n--poly is n--poly}
Let $E \in \CE_0$ be $n$--polynomial, $g: E \longrightarrow F$ a morphism in $\CE_0$ and suppose that the $(n+1)$--th derivative of $F$ is trivial Then the functor given by
\begin{equation*}
V \mapsto \hofibre[E(V) \xrightarrow{g_V} F(V)]
\end{equation*}
is an $n$--polynomial.
\end{lem}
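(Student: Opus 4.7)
The plan is to reduce the claim to a five lemma comparison for homotopy fibre sequences, after first upgrading the hypothesis on the derivative of $F$ into the statement that $F$ itself is $n$--polynomial.

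First I would apply Proposition \ref{fibre sequence for polynomials} to $F$, producing a homotopy fibre sequence
\[
\res_0^{n+1} \ind_0^{n+1} F(V) \longrightarrow F(V) \longrightarrow \tau_n F(V)
\]
in $C_2\T$. Since the $(n+1)$--st derivative of $F$ is assumed trivial, the fibre $\res_0^{n+1} \ind_0^{n+1} F(V)$ is levelwise weakly contractible, and so the canonical map $\rho_F\colon F \longrightarrow \tau_n F$ is a levelwise weak equivalence. In particular $F$ is $n$--polynomial.

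Next, write $H(V) = \hofibre[E(V) \xrightarrow{g_V} F(V)]$, so that $H \to E \to F$ is a homotopy fibre sequence in $\CE_0$. Because $\tau_n$ is built out of a finite homotopy limit (an enriched end over $\mathbf{R}_{\leq n}$, see Remark \ref{rem: C2 action on holim}) applied objectwise, it preserves homotopy fibre sequences and is $C_2$--equivariant. Thus $\tau_n H \to \tau_n E \to \tau_n F$ is again a homotopy fibre sequence in $C_2\T$, and the natural transformation $\rho$ fits into the commutative ladder
\[
\xymatrix{
H(V) \ar[r] \ar[d]_{\rho_H} & E(V) \ar[r] \ar[d]_{\rho_E}^{\simeq} & F(V) \ar[d]_{\rho_F}^{\simeq} \\
\tau_n H(V) \ar[r] & \tau_n E(V) \ar[r] & \tau_n F(V)
}
\]
in which the middle vertical map is a weak equivalence because $E$ is $n$--polynomial by hypothesis and the right vertical map is a weak equivalence by the first step.

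Finally, passing to the long exact sequences of homotopy groups associated to each row (on both the underlying spaces and the $C_2$--fixed points, as required by our chosen model structure on $C_2\T$) and applying the five lemma forces $\rho_H\colon H(V) \longrightarrow \tau_n H(V)$ to be a weak equivalence in $C_2\T$ for every $V \in \CJ_0$, which is precisely the assertion that $H$ is $n$--polynomial. The main subtlety is bookkeeping the $C_2$--equivariance throughout: one must verify that $\tau_n$ applied to the fibre sequence $H \to E \to F$ really is a fibre sequence of $C_2$--spaces, not merely of underlying spaces, and that the ladder above is a ladder of $C_2$--equivariant maps. Both points follow from the enriched end description of $\tau_n$ and the fact that $g$ and $\rho$ are natural transformations of $C_2\T$--enriched functors.
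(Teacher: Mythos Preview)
Your proposal is correct and follows essentially the same approach as the paper, which simply records the proof as ``an application of the Five Lemma.'' Your first step --- using Proposition \ref{fibre sequence for polynomials} to upgrade the hypothesis on the $(n+1)$--st derivative of $F$ to the statement that $\rho_F$ is a weak equivalence --- is exactly the intended move, and the commutative ladder of fibre sequences together with the five lemma is the standard Weiss argument.

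One small correction: your parenthetical about checking the long exact sequences on $C_2$--fixed points is unnecessary here. The paper works throughout with the model structure on $C_2\T$ transferred along ${C_2}_+ \wedge -$, in which the weak equivalences are the \emph{underlying} weak homotopy equivalences (see the discussion before Proposition \ref{prop: n--poly model structure}). So the five lemma applied to the underlying homotopy groups already suffices; there is no fixed--point condition to verify.
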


An instant corollary is that the functor $V \mapsto \Omega F(V)$ is $n$--polynomial whenever $F^{(n+1)}$ vanishes.

\begin{cor}\label{loops on polynomials}
Let $F \in \CE_0$. If $F^{(n+1)}$ is trivial, then the functor $V \mapsto \Omega F(V)$ is $n$--polynomial.
\end{cor}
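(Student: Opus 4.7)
The plan is to apply Lemma \ref{hofibre lemma for polynomials} to the canonical map from the trivial functor into $F$.

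More precisely, let $\ast \in \CE_0$ denote the constant functor at a point. This functor is $n$--polynomial for every $n$, since the homotopy limit of a constant diagram at a point is itself a point, so the canonical map $\ast(V) \longrightarrow \tau_n\ast(V)$ is a weak equivalence. Consider the unique basepoint--inclusion map $g : \ast \longrightarrow F$ in $\CE_0$. By hypothesis, the $(n+1)$--st derivative of $F$ is trivial, so the two assumptions of Lemma \ref{hofibre lemma for polynomials} are met. The lemma then tells us that the functor
\[
V \longmapsto \hofibre[\ast(V) \xrightarrow{g_V} F(V)]
\]
is $n$--polynomial.

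It remains only to identify this hofibre with $\Omega F$. For each $V \in \CJ_0$, the homotopy fibre of a based map $\ast \to F(V)$ is, by definition, the based loop space $\Omega F(V)$, with its inherited $C_2$--action (since the basepoint is fixed). Thus $V \mapsto \Omega F(V)$ is levelwise the $n$--polynomial functor produced by the lemma, which proves the corollary.

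The argument is essentially a direct citation of Lemma \ref{hofibre lemma for polynomials}; the only possible subtlety would be checking $C_2$--equivariance of the identification of the hofibre with $\Omega F$, but this is immediate since the basepoint of $F(V)$ is preserved by the $C_2$--action coming from the $C_2\T$--enrichment, so the loop space construction is performed $C_2$--equivariantly. There is no genuine obstacle here.
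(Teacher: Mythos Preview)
Your proof is correct and follows exactly the same approach as the paper: the paper's proof consists of the single line ``Apply Lemma \ref{hofibre lemma for polynomials} with $E = \ast$,'' and you have simply spelled out the details of this application.
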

\begin{proof}
Apply Lemma \ref{hofibre lemma for polynomials} with $E = \ast$.
\end{proof}

\begin{ex}\label{ex: loop space polynomial}
Let $\Theta \in \s^\mathbf{O}[C_2 \ltimes \U(n)]$. Then the functor given by
\[
V \longmapsto \Omega^\infty[(S^{nV} \wedge \Theta)_{h\U(n)}]
\]
is $n$--polynomial.
\end{ex}
\begin{proof}
This follows from \cite[Example 4.12]{Ta19} upon checking the $C_2$--equivariance.
\end{proof} 

By the Quillen equivalence of the category of spectra with an action of $C_2 \ltimes \U(n)$ and $\CE_1[\U(n)]$, Theorem \ref{thm: E_1 as OS}, we achieve the following corollary. 

\begin{cor}\label{Weiss 5.7 for w/ reality}
Let $\Theta \in \CE_1[\U(n)]$. Then the functor $F$ given by
\[
V \longmapsto \Omega^\infty[(S^{nV} \wedge \Theta)_{h\U(n)}]
\]
is $n$--polynomial.
\end{cor}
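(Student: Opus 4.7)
The plan is to reduce the statement to Example \ref{ex: loop space polynomial} by transporting $\Theta$ across the Quillen equivalence of Theorem \ref{thm: E_1 as OS}. Since being $n$--polynomial is a property which is preserved under levelwise weak equivalences (both source and target of the defining map $\rho$ are homotopy invariant constructions), it suffices to replace $\Theta$ by a weakly equivalent object of a category where the result is already known, and verify that the functor $V \mapsto \Omega^\infty[(S^{nV} \wedge \Theta)_{h\U(n)}]$ is preserved up to levelwise weak equivalence under such replacement.

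First I would pick a cofibrant-fibrant replacement $\Theta^c \in \CE_1[\U(n)]$ of $\Theta$ in the stable model structure, and set $\Theta' = L_\psi(\Theta^c) \in \os[C_2 \ltimes \U(n)]$. Theorem \ref{thm: E_1 as OS} gives that the unit map $\Theta^c \longrightarrow \psi(\Theta')$ (which is a cofibrant-fibrant replacement level) is a stable equivalence; hence after passing to a further fibrant replacement on the orthogonal side, we obtain $\Theta^c \simeq \psi(\Theta')$ in $\CE_1[\U(n)]$.

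Next I would check that the functor
\[
V \longmapsto \Omega^\infty\bigl[(S^{nV} \wedge -)_{h\U(n)}\bigr]
\]
sends a stable equivalence $\Theta \simeq \psi(\Theta')$ to a levelwise weak equivalence. The smash product with the cofibrant $\U(n)$--space $S^{nV}$ preserves stable equivalences between cofibrant objects, the homotopy orbit construction preserves stable equivalences, and $\Omega^\infty$ then preserves them on fibrant objects; this routine verification uses nothing beyond the standard behaviour of these constructions. Finally, because the functor $\psi$ from $\os[C_2 \ltimes \U(n)]$ to $\CE_1[\U(n)]$ of Theorem \ref{thm: E_1 as OS} is compatible with the formation of $S^{nV} \wedge -$, homotopy orbits and $\Omega^\infty$ (this is essentially built into its construction as a right adjoint between stable model categories enriched over $C_2\T$), we obtain a levelwise weak equivalence
\[
\Omega^\infty\bigl[(S^{nV} \wedge \Theta)_{h\U(n)}\bigr] \simeq \Omega^\infty\bigl[(S^{nV} \wedge \Theta')_{h\U(n)}\bigr].
\]

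The right--hand side is $n$--polynomial by Example \ref{ex: loop space polynomial}. Since $n$--polynomiality is invariant under levelwise weak equivalences (the map $\rho$ in the definition and the homotopy limit defining $\tau_n$ are both homotopy invariant), the left--hand side is $n$--polynomial as well. The main obstacle is the second step: namely, making precise that $\psi$ intertwines the constructions $S^{nV} \wedge -$, $(-)_{h\U(n)}$, and $\Omega^\infty$ up to stable equivalence. This is a bookkeeping matter involving the explicit definition of $\psi$ given in Section \ref{section: intermediate category as spectra}, but once it is in place the corollary follows directly from Example \ref{ex: loop space polynomial}.
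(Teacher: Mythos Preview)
Your strategy—transport $\Theta$ along the Quillen equivalence of Theorem~\ref{thm: E_1 as OS} and invoke Example~\ref{ex: loop space polynomial}—is \emph{not} what the paper's proof does, although the sentence preceding the corollary suggests it. The paper instead reruns the direct Weiss argument from \cite[Example~5.7]{We95} inside $\CE_1[\U(n)]$: it identifies the iterated derivatives of $F$ with the functors $F[i](U)=\Omega^\infty[(S^{nU}\wedge\Theta)_{h\U(n-i)}]$, notes that the first derivative of $F[i]$ is $F[i+1]$, and concludes by Corollary~\ref{loops on polynomials}. In other words, the paper reproves Example~\ref{ex: loop space polynomial} with the spectrum living in $\CE_1[\U(n)]$, rather than reducing to it. Your route is more modular, but trades the derivative computation for a compatibility check between $\psi$ and the construction $V\mapsto\Omega^\infty[(S^{nV}\wedge-)_{h\U(n)}]$; the paper's route avoids that entirely at the cost of repeating an argument.

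Two issues with your execution. First, you have the adjunction reversed: in the paper's conventions $L_\psi:\os[C_2\ltimes\U(n)]\to\CE_1[\U(n)]$ is the left adjoint and $\psi:\CE_1[\U(n)]\to\os[C_2\ltimes\U(n)]$ the right, so ``$\Theta'=L_\psi(\Theta^c)\in\os[C_2\ltimes\U(n)]$'' does not type-check. You should take $\Theta'$ to be (a cofibrant replacement of) $\psi(\Theta^{\mathrm{fib}})$ and use the derived counit $L_\psi(\Theta')\to\Theta$ in $\CE_1[\U(n)]$. Second, the ``bookkeeping'' you defer is not a formality: $\psi$ is built from levelwise loops $\Omega^{iV}$, and commuting these past $S^{nV}\wedge-$ and $(-)_{h\U(n)}$ produces assembly maps that are only stable equivalences, not isomorphisms. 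The identity $\Omega^\infty\circ\psi\simeq\Omega^\infty$ does follow directly from the definition of $\psi$, and the rest can be arranged, but it requires more than a sentence.
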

\begin{proof}
We sketch the proof, full details of the argument can be found in \cite[Example 5.7]{We95} or \cite[Example 4.12]{Ta19}. Let $\Theta  \in \CE_1[\U(n)]$ and define a functor $F$ as above. The sequence of derivatives of $F$
\[
F^{(n)} \longrightarrow F^{(n-1)} \longrightarrow \dots \longrightarrow F^{(i)} \longrightarrow \dots \longrightarrow F^{(1)} \longrightarrow F.
\]
can be identified with the sequence of functors
\[
 F[n] \longrightarrow F[n-1] \longrightarrow \dots \longrightarrow F[i] \longrightarrow \dots \longrightarrow F[1] \longrightarrow F,
\]
where $F[i](U) = \Omega^\infty[(S^{nU} \wedge \Theta)_{h\U(n-i)}]$, and $\U(n-i)$ fixes the first $i$ coordinates. Note that $F[i]$ is an object of $C_2 \ltimes \U(i)\E_i^\mathbf{R}$ (see \cite[Example 4.12]{Ta19}). The result then follows from Corollary \ref{loops on polynomials}, and by noting that the first derivative of $F[i]$ is $F[i+1]$. 
\end{proof}

\subsection{The intermediate category} \label{subsection: intermediate category}
As with orthogonal calculus, \cite[Section 4]{BO13}, and unitary calculus, \cite[Section 4]{Ta19}, the $n$--th derivative of a functor naturally lands in a category which is intermediate between the input category and spectra with an action of $C_2 \ltimes \U(n)$. Utilising the theory of diagram spaces of Mandell, May, Schwede and Shipley \cite{MMSS01}, we give the construction of such a category here, and its relation with the input category $\CE_0$.

\begin{definition}
Define $C_2 \ltimes \U(n)\E_n^\mathbf{R}$ to be the category of $(C_2 \ltimes \U(n))\T$--enriched functors from $\CJ_n$ to $(C_2 \ltimes \U(n))\T$.
\end{definition}

This category comes with an equivalent description in terms of a category of modules. Let $\CI$ be the category with the same objects as $\CJ$ and linear isometric isomorphisms. Denote by $C_2\CI\T$ and $(C_2\ltimes \U(n))\CI\T$ the categories of $C_2$--equivariant $\CI$--spaces and $(C_2 \ltimes \U(n))$--equivariant $\CI$--spaces, respectively. These are closed symmetric monoidal categories, \cite[Theorem 1.7]{MMSS01}, with product given by Day convolution \cite[Definition 21.4]{MMSS01}.

Define $n\mathbb{S} : \CI \longrightarrow C_2\T$ to be the functor given by $V \mapsto S^{nV}$, where $nV := \C^n \otimes V$, with $C_2$ acting on $S^{nV}$ via complex conjugation on $V$. Following orthogonal and unitary calculus \cite[Lemma 7.3]{BO13}, \cite[Subsection 4.3]{Ta19}, the functor $n\mathbb{S}$ is a commutative monoid in $C_2\CI\T$, and also in $(C_2\ltimes \U(n))\CI\T$. We verify this claim for $C_2\CI\T$, the other case follows from the unitary case \cite[Subsection 4.3]{Ta19}.

\begin{lem}
For each $n \geq 0$, $n\mathbb{S}$ is a commutative monoid in the category $C_2\CI\T$, of $C_2$--equivariant $\CI$--spaces.
\end{lem}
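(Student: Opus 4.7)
The plan is to leverage the fact, established in the unitary setting \cite[Subsection 4.3]{Ta19}, that $n\mathbb{S}$ is already a commutative monoid in the category $\CI\T$ of (non-equivariant) $\CI$-spaces. Given that fact, the content of the present lemma is the verification that the unit, multiplication, and symmetry witnessing that commutative monoid structure are all $C_2$-equivariant with respect to the $C_2$-actions coming from complex conjugation.

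First, I would recall the structure maps. The monoidal product on $C_2\CI\T$ is Day convolution, so a commutative monoid structure on $n\mathbb{S}$ amounts to a unit $1 \to n\mathbb{S}(0)$ and natural transformations
\[
\mu_{V,W} : n\mathbb{S}(V) \wedge n\mathbb{S}(W) \longrightarrow n\mathbb{S}(V \oplus W),
\]
subject to the usual associativity, unitality, and commutativity axioms, and all of this data must land in $C_2$-equivariant maps. Unpacking $n\mathbb{S}$, the map $\mu_{V,W}$ is the canonical homeomorphism $S^{nV} \wedge S^{nW} \cong S^{nV \oplus nW} \cong S^{n(V \oplus W)}$ induced by the linear isomorphism $\C^n \otimes V \oplus \C^n \otimes W \cong \C^n \otimes (V \oplus W)$, and the unit is the canonical identification $S^0 = n\mathbb{S}(0)$.

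Next, I would check $C_2$-equivariance of each piece. For the unit map, $C_2$ acts trivially on $S^0$, so equivariance is immediate. For the multiplication $\mu_{V,W}$, the $C_2$-action on both sides is induced by complex conjugation on $V$ and $W$, extended $\C$-linearly to $\C^n \otimes V$ and $\C^n \otimes W$; the distributor isomorphism $\C^n \otimes V \oplus \C^n \otimes W \cong \C^n \otimes(V \oplus W)$ is natural in each variable and manifestly intertwines these actions, since complex conjugation respects direct sums and tensoring with $\C^n$ coordinate-wise. Applying the one-point compactification then yields the required $C_2$-equivariance of $\mu_{V,W}$. For commutativity, I would check that the symmetry isomorphism $S^{nV} \wedge S^{nW} \cong S^{nW} \wedge S^{nV}$ used in the unitary case commutes with the conjugation action, which is immediate since complex conjugation is applied factor-wise.

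Finally, since associativity, unitality, and commutativity of the underlying $\CI$-space monoid have already been verified in \cite[Subsection 4.3]{Ta19}, the corresponding diagrams in $C_2\CI\T$ automatically commute (they are the same diagrams, now viewed in the equivariant category). The only substantive point is the $C_2$-equivariance of the structure maps, which is the step above. I do not expect any genuine obstacle: the action is by conjugation on $V$ alone, and the monoid structure is built from isomorphisms $\C^n \otimes (V \oplus W) \cong \C^n \otimes V \oplus \C^n \otimes W$ that are $C_2$-natural by construction of the action. The lemma thus reduces to bookkeeping the $C_2$-action through the unitary proof.
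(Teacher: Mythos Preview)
Your proposal is correct and follows the same overall strategy as the paper: import the commutative monoid structure from the non-equivariant setting and then verify that the relevant structure maps are $C_2$-equivariant for the conjugation action.

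The only difference is one of emphasis. You check equivariance of the unit, multiplication $\mu_{V,W}:S^{nV}\wedge S^{nW}\to S^{n(V\oplus W)}$, and symmetry directly. The paper instead isolates the evaluation map $\CI(V,W)_+\wedge S^{V}\to S^{W}$, $(f,v)\mapsto f(v)$, and checks that \emph{this} is $C_2$-equivariant; once the $\CI$-functoriality of $n\mathbb{S}$ is known to be equivariant, the Day-convolution multiplication (built from these evaluation maps and the canonical isomorphisms you describe) inherits equivariance automatically. Your argument makes the monoid maps explicit, while the paper's pinpoints the single ingredient from which everything else follows; both are short and either suffices.
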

\begin{proof}
The multiplication is identical to that of \cite[Lemma 7.3]{BO13}. It suffices to verify that the evaluation map
\[
\Ev: \CI(V,W)_+ \wedge S^V \longrightarrow S^W
\]
is $C_2$--equivariant. Away from the $C_2$-fixed basepoint, this map is given by $(f, v) \mapsto f(v)$. This is clearly $C_2$--equivariant since
\[
\Ev(g\cdot (f,v)) = \Ev(gfg, gv) = gf(ggv) = g(f(v)) = g\cdot \Ev(f,v). \qedhere
\]
\end{proof}

With this, we get the description of the categories $C_2 \ltimes \U(n)\E_n^\mathbf{R}$ and $\CE_n$ as categories of modules over $n\mathbb{S}$.

\begin{prop}
The category $\CE_n$ is equivalent to the category of $n\mathbb{S}$--modules in $C_2\CI\T$, and the category $C_2 \ltimes \U(n)\E_n^\mathbf{R}$ is equivalent to the category of $n\mathbb{S}$--modules in $(C_2 \ltimes \U(n))\mathbf{R}\T$.
\end{prop}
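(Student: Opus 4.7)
The plan is to invoke the general diagram-space machinery of Mandell--May--Schwede--Shipley \cite{MMSS01} and adapt the unitary version \cite[Subsection 4.3]{Ta19}, with the only new content being a careful check of $C_2$-equivariance. Recall that the general framework says: if $R$ is a commutative monoid in the symmetric monoidal category $\mathcal{D}\T$ of $\T$-enriched functors from a small symmetric monoidal $\T$-category $\mathcal{D}$ to $\T$, then the category of $R$-modules is equivalent to the category of $\T$-enriched functors out of a new category $\mathcal{D}[R]$ whose objects agree with those of $\mathcal{D}$ and whose morphism spaces are built from $\mathcal{D}$ and $R$ via Day convolution. We have already verified that $n\mathbb{S}$ is a commutative monoid in $C_2\CI\T$ (and, by the unitary case, in $(C_2 \ltimes \U(n))\CI\T$), so the machinery applies.

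The main task is therefore to identify the resulting category $\CI[n\mathbb{S}]$ with the $n$-th jet category $\CJ_n$. First I would write down the morphism spaces of $\CI[n\mathbb{S}]$ explicitly: under Day convolution they take the form
\[
\CI[n\mathbb{S}](V,W) \;\cong\; \bigvee_{V \oplus V' \xrightarrow{\cong} W} S^{nV'},
\]
appropriately quotiented so that inner summands act through the monoid structure. By exactly the argument used in the unitary case (comparing to the Thom space description) this is canonically $\T$-homeomorphic to the Thom space $\CJ_n(V,W)$ of the bundle $\Cgamma_n(V,W)$: an isometry $f : V \to W$ together with a vector $x \in \C^n \otimes f(V)^\perp$ corresponds to the pair consisting of the isomorphism $V \oplus f(V)^\perp \xrightarrow{\cong} W$ determined by $f$ and the point $x$ in $S^{n \cdot f(V)^\perp}$.

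The next step is to transport this identification into an equivalence of module categories. An $n\mathbb{S}$-module $M$ in $C_2\CI\T$ consists of a $C_2$-equivariant $\CI$-space $M$ together with $C_2$-equivariant structure maps $S^{nV'} \wedge M(V) \to M(V \oplus V')$, and these are exactly the data needed to extend $M$ from an $\CI$-space to a $\CJ_n$-space: on a morphism $(f,x) \in \CJ_n(V,W)$ the functor $\widetilde M$ acts by $M(V) \xrightarrow{x \wedge \id} S^{n f(V)^\perp} \wedge M(V) \to M(V \oplus f(V)^\perp) \xrightarrow{M(f)} M(W)$. Conversely any $\widetilde M \in \CE_n$ restricts along the inclusion $\CI \hookrightarrow \CJ_n$ to an $\CI$-space, and evaluation on the morphisms $(i, x)$ with $i$ an inclusion provides the module structure maps. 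These two constructions are mutually inverse and functorial, giving the asserted equivalence of categories $\CE_n \simeq n\mathbb{S}\text{-mod}_{C_2\CI\T}$.

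The hard part, and the place where genuine verification is required, is the $C_2$-equivariance of all of the above. The $C_2$-action on $\CJ_n(V,W)$ arising from complex conjugation sends $(f,x) \mapsto (gfg, g \cdot x)$, and we must check that this agrees with the $C_2$-action coming from the Day convolution description, where $C_2$ acts diagonally on $\CI(V \oplus V', W)$ and on $S^{nV'}$ via complex conjugation on both $V'$ and $\C^n$. This is a direct computation analogous to the one carried out in the proof of Proposition \ref{hocolim sphere homeo}. For the second statement one repeats the argument verbatim, observing that the additional $\U(n)$-action on $n\mathbb{S}$ (acting on the $\C^n$ factor) combines with the $C_2$-action into the semi-direct product action, so that $n\mathbb{S}$-modules in $(C_2 \ltimes \U(n))\CI\T$ correspond to $(C_2 \ltimes \U(n))\T$-enriched functors from $\CJ_n$ to $(C_2 \ltimes \U(n))\T$, i.e.\ objects of $C_2 \ltimes \U(n)\E_n^\mathbf{R}$.
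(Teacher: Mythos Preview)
Your approach is correct and essentially the same as the paper's: both reduce to the unitary identification of $n\mathbb{S}$-modules with $\CJ_n$-diagrams from \cite[Subsection 4.3/Proposition 5.2]{Ta19} and isolate the $C_2$-equivariance of the coend identification $\int^{U} \CI(V \oplus U, W)_+ \wedge S^{nU} \cong \CJ_n(V,W)$ as the only new content. The paper is more terse---it cites \cite[Proposition 5.2]{Ta19} outright rather than unpacking the module/functor dictionary, and it actually writes out the equivariance check on the explicit map $\Phi(f,u) = (f|_V, (\C^n \otimes f)(u))$ rather than deferring it as ``a direct computation''; you might consider doing the same, since that one-line verification is the entire substance of the proof.
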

\begin{proof}
By \cite[Proposition 5.2]{Ta19} it is enough to check that the isomorphism
\[
\int^{U \in \CI} \CI(V \oplus U,W)_+ \wedge S^{nU} \longrightarrow \CJ_n(V,W)
\]
is $C_2$--equivariant. It suffices to check this on the map
\[
\Phi: \CI(V \oplus U,W)_+ \wedge S^{nU} \longrightarrow \CJ_n(V,W), (f,u) \mapsto (f|_V, (\C^n \otimes f)(u))
\]
which construct the above isomorphism. Indeed, let $g \in C_2$ and recall $g=g^{-1}$. Then
\[
\begin{split}
g \cdot \Phi(f,u) &= g \cdot (f|_V, (\C^n \otimes f_(u))) = (g\cdot f|_V, g((\C^n \otimes f)(u)))  \\
			    &= ((gfg)|_V, g((\C^n \otimes f(ggu)))) = \Phi( g\cdot f, gu) = \Phi(g\cdot(f,u)). \qedhere
\end{split} 
\]
\end{proof}

We now follow the procedure set by Barnes and Oman \cite[Section 4]{BO13} for orthogonal calculus and combine the restriction--induction adjunction with change of group functors of Mandell and May \cite[Section V.2]{MM02} to construct an adjunction between $C_2 \ltimes \U(n)\E_n^\mathbf{R}$ and $C_2 \ltimes \U(m)\E_m^\mathbf{R}$ similar to that of the above adjunction between $\CE_n$ and $\CE_m$. Note that the same technique was employed in \cite[Section 7]{Ta19}.

\begin{definition}
Define the \textit{restriction--orbit functor} $\res_m^n/\U(n-m) : C_2 \ltimes \U(n)\E_n^\mathbf{R} \longrightarrow C_2 \ltimes \U(m)\E_m^\mathbf{R}$ as the functor which sends $X$ to $(X \circ i_m^n)/\U(n-m)$.
\end{definition}

This is as well defined functor since $(X \circ i_m^n)/\U(n-m)$ is a $(C_2 \ltimes \U(m))\T$--enriched functor from $\CJ_m$ to $(C_2 \ltimes \U(m))\T$.

The restriction-orbit functor has a right adjoint, the first step in the construction of which, is to identify the right adjoint of the orbits functor $(-)/\U(n-m) : (C_2 \ltimes \U(n))\T \longrightarrow (C_2 \ltimes \U(m))\T$. This right adjoint is defined as the composite of two functors. The first takes a $(C_2 \ltimes \U(m))$--space $Y$ and considers it as a $(C_2 \ltimes (\U(m) \times \U(n-m)))$--space by letting the $\U(n-m)$--factor act trivially. We denote this by $\varepsilon^*Y$. The second functor takes $\varepsilon^*Y$ and sends to the space of $(C_2 \ltimes (\U(m) \times \U(n-m)))$--equivariant maps from $C_2 \ltimes \U(n)$ to $\varepsilon^*Y$. The result is as adjoint pair
\[
\adjunction{(-)/\U(n-m)}{(C_2 \ltimes \U(n))\T}{(C_2 \ltimes \U(m))\T}{CI_m^n},
\]
where $CI_m^nY = F_{C_2 \ltimes (\U(m) \times \U(n-m))} ((C_2 \ltimes \U(n))_+, \varepsilon^*Y)$.

The result is an adjunction
\[
\adjunction{\res_m^n/\U(n-m)}{C_2 \ltimes \U(n)\E_n^\mathbf{R}}{C_2 \ltimes \U(m)\E_m^\mathbf{R}}{\ind_m^nCI},
\]
where $(\ind_m^nCI)(X)(V) = C_2 \ltimes \U(m)\E_m^\mathbf{R}(\CJ_n(V,-), CI_m^nX)$.

We are particularly interested in the case when $m=0$ in which instance, the adjunction reduces to
\[
\adjunction{\res_0^n/\U(n)}{C_2\ltimes \U(n)\E_n^\mathbf{R}}{\CE_0}{\ind_0^n\varepsilon^*}.
\]
in which case $\ind_0^n \varepsilon^*F$ is the $n$-th derivative of $F$. 

\subsection{The $n$--stable model structure} With the description of $C_2 \ltimes \U(n)\E_n^\mathbf{R}$ as $n\mathbb{S}$--modules we can construct a stable model structure on $C_2 \ltimes \U(n)\E_n^\mathbf{R}$. We start as is standard with the projective model structure, and suitably left Bousfield localise to produce the $n$--stable model structure. The existence of this model structure follows from \cite{MMSS01}, taking $\J_n$ as the diagram category.

\begin{lem}
There is a cellular, proper and topological model structure on $C_2 \ltimes \U(n)\E_n^\mathbf{R}$ with weak equivalences and fibrations defined levelwise. The generating cofibrations are of the form
\[
\CJ_n(U,-) \wedge (C_2 \ltimes \U(n))_+ \wedge i
\]
for $i$ a generating cofibration of $\T$. The generating acyclic cofibrations are of the form
\[
\CJ_n(U,-) \wedge (C_2 \ltimes \U(n))_+ \wedge j
\]
for $j$ a generating acyclic cofibration of $\T$.
\end{lem}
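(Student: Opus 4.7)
The plan is to invoke the general existence theorem for projective model structures on enriched diagram categories (cf. \cite{MMSS01}), applied to the situation where $\CJ_n$---viewed as $(C_2 \ltimes \U(n))\T$-enriched by letting the factor $\U(n)$ act trivially on each morphism space---serves as the indexing category, and $(C_2 \ltimes \U(n))\T$ serves as the target, equipped with the Quillen model structure transferred along the free/forgetful adjunction from $\T$.

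First I would recall that $(C_2 \ltimes \U(n))\T$ is a cellular, proper, topological model category whose generating (acyclic) cofibrations are of the form $(C_2 \ltimes \U(n))_+ \wedge i$ for $i \in I$ (respectively $j \in J$), exactly as was described earlier in this paper for the analogous transferred model structure on $C_2 \T$. Since $\CJ_n$ is $C_2\T$-enriched, it may be regarded as $(C_2 \ltimes \U(n))\T$-enriched via the trivial $\U(n)$-action, so the category $C_2 \ltimes \U(n)\E_n^\mathbf{R}$ is genuinely a category of enriched diagrams over a small enriched indexing category landing in a cofibrantly generated model category.

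Next, I would apply the standard projective/levelwise model structure result for enriched diagrams: the model structure on the functor category has weak equivalences and fibrations detected by the evaluation functors $\mathrm{Ev}_V$, and a set of generating (acyclic) cofibrations is produced by smashing representables with the generating (acyclic) cofibrations of the target. Concretely, this yields cells of the form
\[
\CJ_n(U,-) \wedge (C_2 \ltimes \U(n))_+ \wedge i, \qquad \CJ_n(U,-) \wedge (C_2 \ltimes \U(n))_+ \wedge j,
\]
as $U$ ranges over the objects of $\CJ_n$ and $i$, $j$ range over $I$ and $J$ respectively, matching the sets claimed in the statement.

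Finally, cellularity, properness, and the topological enrichment all transfer from $(C_2 \ltimes \U(n))\T$ levelwise: right and left properness reduce via the evaluation functors to the corresponding properties of $(C_2 \ltimes \U(n))\T$; cellularity follows since the evaluation functors preserve the smallness conditions required by \cite[Definition 12.1.1]{Hi03}, using that $(C_2 \ltimes \U(n))\T$ is cellular; and the topological enrichment is inherited from the underlying $\T$-enrichment of $(C_2 \ltimes \U(n))\T$. There is no genuine obstacle here; the only point requiring any care is the bookkeeping around the trivial $\U(n)$-action making $\CJ_n$ a $(C_2 \ltimes \U(n))\T$-enriched category, after which everything is a direct specialisation of \cite{MMSS01}.
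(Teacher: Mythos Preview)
Your proposal is correct and follows the same approach as the paper: both invoke the general machinery of \cite{MMSS01} for projective model structures on enriched diagram categories, with $\CJ_n$ as the indexing diagram. The paper simply states that the result follows from \cite{MMSS01} without further elaboration, whereas you have spelled out the bookkeeping (the transferred model structure on $(C_2\ltimes\U(n))\T$, the trivial $\U(n)$--action on the morphism spaces of $\CJ_n$, and the inheritance of cellularity, properness, and the topological enrichment), but the underlying argument is identical.
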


A left Bousfield localisation of this model structure results in the $n$--stable model structure. The construction is completely analogous to the $n$--stable model structure of \cite[Section 7]{BO13} or \cite[Subsection 4.3]{Ta19}.

\begin{definition}
The \textit{$n$--homotopy groups} of $X \in C_2 \ltimes \U(n)\E_n^\mathbf{R}$ are defined as
\[
n\pi_k X = \underset{q}{\colim}~ \pi_{2nq +k} X(\C^q).
\]
A map $f: X \longrightarrow Y$ in $C_2 \ltimes \U(n)\E_n^\mathbf{R}$ is an \textit{$n\pi_*$--isomorphism} if $n\pi_k : n\pi_kX \longrightarrow n\pi_kY$ is an isomorphism for all $k\geq 0$.
\end{definition}

There is also a version of $n\Omega$--spectra defined analogously to the orthogonal or unitary calculus.

\begin{definition}
An element $X \in C_2 \ltimes \U(n)\E_n^\mathbf{R}$ is an \textit{$n\Omega$--spectrum} if the adjoint structure maps
\[
X(U) \longrightarrow \Omega^{nV}X(U \oplus V)
\]
are weak homotopy equivalences for all $U, V \in \CJ_n$.
\end{definition}

Denote by
\[
\lambda_{V,W}^n : \CJ_n(V \oplus W, -) \wedge S^{nW} \longrightarrow \CJ_n(V,-)
\]
the restricted composition map. We can factor this map, through it's mapping cylinder, as a cofibration
\[
k_{V,W}^n : \CJ_n(V \oplus W, -) \wedge S^{nW}  \longrightarrow M\lambda_{V,W}^n
\]
and an acyclic fibration
\[
r_{V,W}^n : M\lambda_{V,W}^n \longrightarrow \CJ_n(V,-).
\]

Adding the cofibrations $\{k_{V,W}^n\}$ to the acyclic cofibrations of the projective model structure in a particular way yields the $n$--stable model structure.

\begin{thm}
There is a cofibrantly generated, proper, and topological model structure on the category $C_2 \ltimes \U(n)\E_n^\mathbf{R}$, where the weak equivalences are the $n\pi_*$--isomorphisms, and the fibrations are those maps $f: X \longrightarrow Y$ which are levelwise fibrations, such that the square
\[
\xymatrix{
X(V) \ar[r] \ar[d]_{f_V} & \Omega^{nW}X(V \oplus W) \ar[d]^{\Omega^{nW}f_{V\oplus W}} \\
Y(V) \ar[r] & \Omega^{nW}Y(V \oplus W)
}
\]
is a homotopy pullback for all $V,W \in \CJ_n$. The generating cofibrations are those of the projective models structure and the generating acyclic cofibrations are the union of the projective generating acyclic cofibrations together with the set
\[
K_{V,W}^n \Box I := \{ k_{V,W}^n \Box i \ : \ i \in I, \ V,W \in \CI\},
\]
where $k_{V,W}^n \Box i$ denotes the pushout product of the maps $k_{V,W}^n$ and $i$.
\end{thm}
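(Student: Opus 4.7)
The plan is to construct the $n$--stable model structure as a left Bousfield localisation of the projective model structure on $C_2 \ltimes \U(n)\E_n^\mathbf{R}$ at the set of maps
\[
\Lambda_n = \{\lambda_{V,W}^n : \CJ_n(V \oplus W,-) \wedge S^{nW} \longrightarrow \CJ_n(V,-) \mid V,W \in \CJ_n\}.
\]
Since the projective model structure is cellular, proper and topological, Hirschhorn's localisation machinery \cite[Chapter 4]{Hi03} produces a cellular, left proper, topological model structure whose cofibrations are unchanged, whose weak equivalences are the $\Lambda_n$--local equivalences and whose fibrant objects are the projectively fibrant $\Lambda_n$--local objects. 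By the enriched Yoneda lemma, an $X$ is $\Lambda_n$--local precisely when each adjoint structure map $X(V) \longrightarrow \Omega^{nW} X(V \oplus W)$ is a weak equivalence, that is, when $X$ is an $n\Omega$--spectrum.

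For the generating acyclic cofibrations, I would replace each $\lambda_{V,W}^n$ by its mapping cylinder factorisation $k_{V,W}^n$ followed by the acyclic fibration $r_{V,W}^n$; since localising at $\Lambda_n$ is the same as localising at $\{k_{V,W}^n\}$, the standard argument of \cite[Section 7]{BO13} and \cite[Subsection 4.3]{Ta19} shows that a generating set of acyclic cofibrations is obtained by adjoining $K_{V,W}^n \Box I$ to the projective generating acyclic cofibrations. The identification of the $\Lambda_n$--local equivalences with the $n\pi_*$--isomorphisms is next: since weak equivalences in $(C_2 \ltimes \U(n))\T$ are detected on underlying spaces, this essentially reduces to the non--equivariant computation. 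A fibrant replacement can be built as a transfinite composition of applications of $\Omega^{nW}(- \oplus W)$, and the homotopy groups of this replacement at $\C^q$ stabilise, by construction, to $n\pi_* X$; thus a map is a $\Lambda_n$--local equivalence iff it induces an $n\pi_*$--isomorphism. The fibration characterisation via the stated homotopy pullback square then follows from the general fact that $f : X \longrightarrow Y$ is a fibration in the localisation iff it is a projective fibration and the square comparing $X,Y$ with their fibrant replacements is a homotopy pullback --- for the $n\Omega$--spectrum replacement this becomes precisely the square in the statement.

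Right properness is the main obstacle, and is treated exactly as in \cite[Section 7]{BO13}: one must verify that the pullback of an $n\pi_*$--isomorphism along a fibration of the new model structure is again an $n\pi_*$--isomorphism. This uses the long exact sequence of a fibration on $n\pi_*$, whose existence follows from the description of fibrant objects as $n\Omega$--spectra together with the standard fact that $n\pi_*$ of an $n\Omega$--spectrum can be read off from homotopy groups of the individual spaces $X(\C^q)$. Since the underlying weak equivalences detect weak equivalences in $(C_2 \ltimes \U(n))\T$ and the constructions above are $(C_2 \ltimes \U(n))$--equivariant by inspection, the whole argument proceeds as in the unitary calculus \cite[Subsection 4.3]{Ta19}, with the only additional work being routine equivariance checks on the mapping cylinder factorisations and on the fibrant replacement construction.
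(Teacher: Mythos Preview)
Your proposal is correct and follows the same approach the paper relies on. The paper does not give a detailed proof of this theorem; it merely states that the construction is a left Bousfield localisation of the projective model structure, completely analogous to \cite[Section 7]{BO13} and \cite[Subsection 4.3]{Ta19}, and your sketch faithfully unpacks exactly that argument (localising at the maps $\lambda_{V,W}^n$, identifying local objects with $n\Omega$--spectra via enriched Yoneda, using the mapping cylinder factorisation to produce the generating acyclic cofibrations, and reducing the identification of local equivalences with $n\pi_*$--isomorphisms and right properness to the underlying non--equivariant statements).
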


The derivatives of $n$--polynomial objects are well behaved with respect to the $n$--stable model structure, in that they are $n\Omega$--spectra. The orthogonal version of this may be found in \cite[Proposition 5.12]{BO13} or \cite[Proposition 5.12]{We95}, the proof is completely analogous. 

\begin{lem}\label{derivatives of n--poly are stable}
If $E$ is an $n$--polynomial in $\CE_0$, then for any $V \in \CJ_0$, the map
\[
\ind_0^nE(V) \longrightarrow \Omega^{2n} \ind_0^nE(V \oplus \C),
\]
adjoint to the structure maps of $\ind_0^nE$ is a weak homotopy equivalence.
\end{lem}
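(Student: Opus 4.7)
The plan is to reduce the statement directly to Proposition \ref{fibre sequence for derivative} applied to the functor $\ind_0^n E \in \CE_n$, and then identify the resulting homotopy fibre with an iterated derivative of $E$ which we already know is trivial.

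First, I would apply Proposition \ref{fibre sequence for derivative} with $F = \ind_0^n E$, which is an object of $\CE_n$. This produces a homotopy fibre sequence in $C_2\T$
\[
\res_n^{n+1}\ind_n^{n+1}(\ind_0^n E)(V) \longrightarrow \ind_0^n E(V) \longrightarrow \Omega^{2n}\ind_0^n E(V \oplus \C),
\]
in which the right-hand map is precisely the adjoint structure map under consideration. It therefore suffices to show that the fibre term is weakly contractible for every $V \in \CJ_n$ (equivalently, for every $V \in \CJ_0$, since these categories share the same objects).

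Next, I would invoke the standard compatibility of left Kan extensions and restrictions along the composable inclusions $\CJ_0 \xrightarrow{i_0^n} \CJ_n \xrightarrow{i_n^{n+1}} \CJ_{n+1}$. Since $i_0^{n+1} = i_n^{n+1} \circ i_0^n$, we have the natural isomorphism $\ind_n^{n+1}\ind_0^n \cong \ind_0^{n+1}$ by the universal property of Kan extensions, and dually $\res_0^n\res_n^{n+1} \cong \res_0^{n+1}$. Since restricting along $i_0^n$ does not alter the underlying space of a functor evaluated at $V$, for every $V \in \CJ_0$ we have
\[
\res_n^{n+1}\ind_n^{n+1}(\ind_0^n E)(V) \cong \res_0^{n+1}\ind_0^{n+1} E(V),
\]
as underlying spaces. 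But $E$ is $n$--polynomial, so Corollary \ref{higher derivative vanishes} asserts that $\res_0^{n+1}\ind_0^{n+1} E$ is levelwise weakly contractible. Hence the fibre in the sequence above is weakly contractible at every $V$.

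Finally, the long exact sequence of homotopy groups associated with the fibre sequence immediately gives that $\ind_0^n E(V) \to \Omega^{2n}\ind_0^n E(V \oplus \C)$ is a weak homotopy equivalence, as required. The main conceptual point, and the only step requiring a little care, is the identification of the iterated derivative $\res_n^{n+1}\ind_n^{n+1}\ind_0^n E$ with $\res_0^{n+1}\ind_0^{n+1} E$; everything else is a direct application of results already established in Section \ref{section: derivative}.
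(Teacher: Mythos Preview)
Your proposal is correct and is precisely the standard argument to which the paper defers (citing \cite[Proposition 5.12]{BO13} and \cite[Proposition 5.12]{We95}): one applies the fibre sequence of Proposition \ref{fibre sequence for derivative} to $\ind_0^n E$, identifies the fibre with the $(n+1)$--st derivative of $E$ via $\ind_n^{n+1}\ind_0^n \cong \ind_0^{n+1}$, and concludes using Corollary \ref{higher derivative vanishes}. The only minor remark is that Corollary \ref{higher derivative vanishes} as printed contains an evident typo ($\res_0^n\ind_0^n$ in place of $\res_0^{n+1}\ind_0^{n+1}$), but your reading of it as the vanishing of the $(n+1)$--st derivative is the intended statement.
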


\section{The Intermediate Category as a Category of Spectra}\label{section: intermediate category as spectra}
In orthogonal calculus, Weiss \cite{We95}, constructs a zig--zag of equivalences up to homotopy between $n$--homogeneous functors and spectra with an action of $\O(n)$. Barnes and Oman \cite{BO13} improved this result to a zig--zag of Quillen equivalences between orthogonal spectra with an action of $\O(n)$, $\os[\O(n)]$, and the $n$--homogeneous model structure on the category of orthogonal functors, $n\homog\E_0^\mathbf{O}$. This result was extended to unitary calculus in \cite{Ta19}.

This result also holds in the setting of calculus with reality, albeit, it is slightly more complicated as the equivariance requires the introduction of a further step in the zig--zag of Quillen equivalences. We start by showing that the intermediate category $C_2 \ltimes \U(n)\E_n^\mathbf{R}$ is equivalent to the category of $\U(n)$--objects in $\CE_1$, $\CE_1[\U(n)]$.

We further give a description of the category $\CE_1$ in terms of the Real spectra (Definition \ref{def: real spectra}) as defined by Schwede, \cite[Example 7.7]{Sch19}. Moreover, we give a description of Real spectra (or $\CE_1$) in terms of orthogonal spectra with an action of the group $C_2 \ltimes \U(n)$. This section results in the following zig--zag Quillen equivalences between the intermediate category a spectra with a group action.
\[
\xymatrix@C+1cm{
C_2 \ltimes \U(n)\E_n^\mathbf{R}
\ar@<+1ex>[r]^{(\xi_n)_!}
&
\CE_1[\U(n)]
\ar@<+1ex>[l]^{(\xi_n)^*}
\ar@<-1ex>[r]_{\psi}
&
\s^\mathbf{O}[C_2 \ltimes \U(n)]
\ar@<-1ex>[l]_{L_\psi} \\
}
\]
The rest of this section is dedicated to explaining the above equivalences.

\begin{rem}\label{algebraic model remark}These Quillen equivalences describe the $n$-th derivatives in terms of spectra with a $(C_2 \ltimes \U(n))$--action. There are two main methods to convert a spectrum with an action of $C_2 \ltimes \U(n)$ into a genuine $(C_2 \ltimes \U(n))$--spectrum. The first is to note that our stable model structure on $\s[C_2 \ltimes \U(n)]$ is a model for free $G$--spectra since it is homotopically compactly generated by the suspension spectrum of $C_2 \ltimes \U(n)$, see Greenlees and Shipley \cite[Section 3]{GS11} for a discussion on other models. The other option is that of cofree $(C_2 \ltimes \U(n))$--spectra. In \cite[Lemma 5.3]{Ke17}, K\k{e}dziorek demonstrates a Quillen equivalence between spectra with a $G$--action and cofree $G$-spectra, modelled by the $EG_+$--localisation of the category of genuine $G$--spectra. Another method of describing a spectrum with a $G$-action as a cofree $G$-spectrum is given by Hill and Meier in \cite[Subsection 2.2]{HM17}, as the derived functor $\mathrm{I} F(EG_+, -)$, where $\mathrm{I}$ is the equivalence of categories between spectra with a $G$-action and genuine $G$-spectra induced by in the inclusion of a trivial $G$-representations into a complete $G$-universe.

The benefit of these descriptions of spectra with a $G$-action as genuine $G$-spectra are the algebraic models for their rational homotopy type. Greenlees and Shipley \cite{GS11, GS14}, provide an algebraic model for rational free $G$--spectra, through a Quillen equivalence, \cite[Theorem 1.1]{GS14}, between rational free $G$--spectra and torsion $H^*\widetilde{BN}[W]$--modules, where $N$ is the identity component of $G$ and $W=G/N$. In our particular case, rational free $(C_2 \ltimes \U(n))$--spectra is algebraically modelled by torsion $H^*\widetilde{B\U(n)}[C_2]$-modules. Pol and Williamson \cite{PW19}, further provide an algebraic model for rational cofree $G$--spectra in the form of a Quillen equivalence, \cite[Theorem 9.6]{PW19}, between rational cofree $G$--spectra and $L_0^I$--complete differential--graded $H^*\widetilde{BN}[W]$--modules, where $I$ is the augmentation ideal of $H^*BN$, and $L_0^I$ is the zeroth left derived functor of $I$--adic completion. In our case, the Pol and Williamson algebraic model is given by the category of $L_0^I$--complete differential graded $H^*\widetilde{B\U(n)}[C_2]$--modules, where $I$ is the augmentation ideal of the polynomial ring $H^*B\U(n) = \Q[c_1, \cdots, c_n]$ on the first $n$ Chern classes, that is, the ideal generated by the Chern classes, $I=(c_1, \cdots, c_n)$. As such, there are two (Quillen equivalent) algebraic models for the rational homotopy type of $n$-homogeneous functors.
\end{rem}

\subsection{The Quillen equivalence between $C_2 \ltimes \U(n)\E_n^\mathbf{R}$ and $\CE_1[\U(n)]$}

An object $X$ of $\CE_1[\U(n)]$ is given by a collection of spaces $\{X(V) \mid V \in \J^\mathbf{R}_1\}$ with a $(C_2 \ltimes \U(n))$--action with structure maps
\[
S^2 \wedge X(V) \longrightarrow X(V \oplus \C).
\]
There structure maps are $(C_2 \ltimes \U(n))$--equivariant with diagonal action on the domain, trivial $\U(n)$--action on $S^2$, and $C_2$ acting on $S^2$ by complex conjugation, since $C_2$ acts on  $\U(n)$ via complex conjugation.

Following \cite[Section 8]{BO13} and \cite[Subsection 4.3]{Ta19}, we construct the adjunction via a functor on the indexing categories. Define $\xi_n : \CJ_n \longrightarrow \CJ_1$ by $\xi_n(V) = \C^n \otimes_\C V$ on objects, and $\xi_n(f,x) = (\C^n \otimes_\C f, x)$ on morphisms. This induces a functor
\[
(\xi_n)^* : \CE_1[\U(n)] \longrightarrow C_2 \ltimes \U(n)\E_n^\mathbf{R}
\]
given by precomposition, where we let $C_2 \ltimes \U(n)$ act on $X(nV)$  by $X(g\sigma \otimes V) \circ (g\sigma)_{X(nV)}$ where, $g \in C_2$, and $\sigma \in \U(n)$. Here, $X(g\sigma \otimes V)$ is the internal action on $X(nV)$ induced by the action on $nV$, and $(g\sigma)_{X(nV)}$ is the external action on $X(nV)$ induced by $X(nV)$ being a $(C_2 \ltimes \U(n))$--space. Checking that this functor is well defined is equivalent to checking that the map
\[
(\xi_n)^*X : \CJ_n(U,V) \longrightarrow \T((\xi_n)^*X(U), (\xi_n)^*X(V))
\]
is $(C_2 \ltimes \U(n))$--equivariant.

To see this, consider the following commutative diagram
\[
\xymatrix@C+1cm{
\CJ_n(U,V) \ar[r]^{\xi_n} \ar[d]^{(g\sigma)} & \CJ_1(nU, nV) \ar[r]^X \ar[d]^{((g\sigma)^{-1} \otimes U)^*\circ((g\sigma) \otimes V)_*} & \T(X(nU), X(nV)) \ar[d]^{(X((g\sigma)^{-1} \otimes U))^*\circ(X((g\sigma) \otimes V))_*} \\
\CJ_n(U,V) \ar[r]_{\xi_n}& \CJ_1(nU,nV) \ar[r]_X & \T(X(nU), X(nV)). \\
}
\]
Let $(f,x) \in \CJ_n(U,V)$, applying $X \circ \xi_n$ to $(f,x)$ gives a $(C_2 \ltimes \U(n))$--equivariant map $X(nU) \longrightarrow X(nV)$, and it follows that the two expressions
\[
\begin{split}
X((g\sigma) \otimes V) \circ &X(\C^n \otimes f, x) \circ X((g\sigma)^{-1} \otimes U), \\
(g\sigma)_{X(U)} \circ X(\sigma \otimes V) \circ &X(\C^n \otimes f, x) \circ X((g\sigma)^{-1} \otimes U) \circ (g\sigma)^{-1}_{X(V)}
\end{split}
\]
are equal. Note removing the $C_2$--action gives the exact proof of this fact for the unitary calculus.

Left Kan extending along $\xi_n$ defines the left adjoint $(\xi_n)_!$ to precomposition with $\xi_n$, resulting in an adjunction
\[
\adjunction{(\xi_n)_!}{C_2 \ltimes \U(n)\E_n^\mathbf{R}}{\CE_1[\U(n)]}{(\xi_n)^*}.
\]
This left adjoint comes with the usual description as a $(C_2 \ltimes \U(n))\T$--enriched coend,
\[
(\xi_n)_!(X)(V) = \int^{U \in \CJ_n} \CJ_1(nU, V) \wedge X(U).
\]

\begin{thm}\label{thm: intermediate cat as E_1}
The adjoint pair
\[
\adjunction{(\xi_n)_!}{C_2 \ltimes \U(n)\E_n^\mathbf{R}}{\CE_1[\U(n)]}{(\xi_n)^*}
\]
is a Quillen equivalence.
\end{thm}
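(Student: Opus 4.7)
\medskip

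\noindent\textbf{Proof proposal.} The plan is to follow the analogous argument in the unitary case \cite[Subsection 4.3]{Ta19}, with the extra bookkeeping required by the $C_2$--action from complex conjugation. First I would verify that the pair $((\xi_n)_!, (\xi_n)^*)$ is a Quillen adjunction between the projective level model structures on $C_2 \ltimes \U(n)\E_n^\mathbf{R}$ and $\CE_1[\U(n)]$. The right adjoint $(\xi_n)^*$ visibly preserves levelwise weak equivalences and levelwise fibrations since these are detected on underlying spaces: a map $f\colon X \longrightarrow Y$ in $\CE_1[\U(n)]$ is a levelwise weak equivalence (resp.\ fibration) exactly when each $f_V$ is a weak equivalence (resp.\ Serre fibration) of spaces, and $((\xi_n)^* f)_V = f_{nV}$ has the same property.

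Next I would promote this to a Quillen pair on the stable model structures. Since the cofibrations agree with the projective cofibrations and the stable model structure is a left Bousfield localisation in the sense of \cite[Section 7]{BO13}, it suffices to check that $(\xi_n)^*$ sends $n\Omega$--spectra (in the codomain sense, i.e.\ $\Omega$--spectra in $\CE_1[\U(n)]$) to $n\Omega$--spectra in $C_2 \ltimes \U(n)\E_n^\mathbf{R}$. This is immediate from the definitions: if $X(V) \longrightarrow \Omega^{2W} X(V \oplus W)$ is a weak equivalence for $V, W \in \CJ_1$, then taking $V = nV', W = nW'$ exhibits $((\xi_n)^*X)(V') \longrightarrow \Omega^{2nW'} ((\xi_n)^*X)(V' \oplus W')$ as a weak equivalence. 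Equivariance is preserved by construction of the $(C_2 \ltimes \U(n))$--action on $(\xi_n)^*X$ described in the paper.

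For the Quillen equivalence itself, I would check the two standard conditions: that $(\xi_n)^*$ reflects weak equivalences between fibrant objects, and that the derived unit is a weak equivalence on all cofibrant objects. For the first, observe that for $X$ a stably fibrant object of $\CE_1[\U(n)]$ one has
\[
n\pi_k (\xi_n)^* X = \underset{q}{\colim}\ \pi_{2nq + k} X(\C^{nq}),
\]
which is the subsequence of $\pi_k X = \colim_p \pi_{2p+k} X(\C^p)$ indexed by the cofinal set $\{nq\}_{q \geq 0}$ of $\mathbb{N}$, hence gives an isomorphism $n\pi_k (\xi_n)^*X \cong \pi_k X$. Therefore $(\xi_n)^*$ induces an isomorphism on stable homotopy groups between fibrant objects, and hence reflects stable equivalences between them.

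For the derived unit, by a standard cell--induction argument (see \cite[Theorem 8.5]{BO13} and \cite[Proposition 7.11]{Ta19}) it is enough to verify that the unit
\[
\eta\colon \CJ_n(U,-) \wedge (C_2 \ltimes \U(n))_+ \longrightarrow (\xi_n)^*(\xi_n)_!\bigl(\CJ_n(U,-) \wedge (C_2 \ltimes \U(n))_+\bigr)
\]
is a stable equivalence in $C_2 \ltimes \U(n)\E_n^\mathbf{R}$ for each $U \in \CJ_n$, since the domains generate the stable model structure. The coend formula for $(\xi_n)_!$ together with the Yoneda lemma identifies $(\xi_n)_! \CJ_n(U,-)$ with $\CJ_1(nU, -)$ equipped with its natural $(C_2 \ltimes \U(n))$--action, so on the level of underlying spaces the unit becomes
\[
\CJ_n(U,V) \longrightarrow \CJ_1(nU, nV),
\]
induced by the functor $\xi_n$ on Thom spaces of the vector bundles $\Cgamma_n(U,V)$ and $\Cgamma_1(nU, nV)$. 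The calculation that this map is an $n\pi_\ast$--isomorphism is the unitary calculation carried out in \cite{Ta19}; the only new ingredient is to check $C_2$--equivariance of each identification, which follows from the equivariance statements already established in the paper (in particular from the $C_2$--equivariant identification of the sphere bundle in Proposition \ref{hocolim sphere homeo} and the mapping cone description in Proposition \ref{prop: cofibre sequence of jet categories}).

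The main obstacle is the last step: one must carefully track the $C_2$--action through the colimit identifications used to compute $n\pi_*$ of $\CJ_1(nU, n-)$ and verify that the comparison map with $\CJ_n(U,-)$ respects it. Once equivariance of the building blocks is in place, the bulk of the argument is essentially a cofinality argument in the colimit defining $n\pi_*$, and parallels the unitary calculus proof verbatim.
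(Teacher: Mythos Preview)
Your proposal is correct and follows essentially the same route as the paper: establish the Quillen adjunction by checking that $(\xi_n)^*$ preserves acyclic fibrations and fibrant objects, use the cofinality of $\{nq\}\subset\mathbb{N}$ to see that $(\xi_n)^*$ reflects stable equivalences, and verify the derived unit on generators. The only substantive difference is that the paper reduces the derived unit check to the \emph{single} homotopically compact generator $(C_2 \ltimes \U(n))_+ \wedge n\mathbb{S}=(C_2 \ltimes \U(n))_+ \wedge \CJ_n(0,-)$ (invoking that the category is homotopically compactly generated, cf.\ \cite[Lemma~3.2]{Ke17}), whereas you run a cell--induction over all $\CJ_n(U,-)\wedge (C_2 \ltimes \U(n))_+$; either reduction is fine. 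One small over-emphasis: in your final step the weak equivalences in both model structures are detected on underlying (non--equivariant) homotopy groups, so once the unit is known to be a morphism in $C_2 \ltimes \U(n)\E_n^\mathbf{R}$ the actual $n\pi_*$--isomorphism check is purely the unitary computation from \cite{Ta19} and needs no further $C_2$--tracking; Propositions~\ref{hocolim sphere homeo} and~\ref{prop: cofibre sequence of jet categories} are not used here.
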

\begin{proof}
The proof follows similarly to \cite[Theorem 6.8]{Ta19}. It is straightforward to show that the right adjoint preserves acyclic fibrations and fibrant objects. Moreover, a confinality argument demonstrates that the right adjoint reflects weak equivalences.

It is left to show that the derived unit of the adjunction is an equivalence. Since $C_2 \ltimes \U(n)\E_n^\mathbf{R}$ is homotopically compactly generated it suffices to show the derived unit condition on the homotopically compact generator $(C_2 \ltimes \U(n))_+ \wedge n\mathbb{S}$ of $C_2 \ltimes \U(n)\E_n^\mathbf{R}$. This is a matter of plugging the homotopically compact generator into the formula for the unit, as in \cite[Theorem 6.8]{Ta19}.
\end{proof}

\subsection{The equivalence between $\CE_1$ and Real spectra} The above Quillen equivalence provides an equivalence of categories between the homotopy category of $C_2 \ltimes \U(n)\E_n^\mathbf{R}$ and the homotopy category of $\CE_1[\U(n)]$. In both orthogonal and unitary calculus, the categories $\E_1^\mathbf{O}$ and $\E_1^\mathbf{U}$ are equivalent to the categories of orthogonal and unitary spectra respectively. We now show that $\CE_1$ is equivalent to the category of Real spectra of Schwede, see \cite[Example 7.11]{Sch19}. 

\begin{definition}\label{def: real spectra}
A Real spectrum $X$ is a sequence of spaces $\{X_k\}_{k \in \mathbb{N}}$ with an action of $C_2 \ltimes \U(k)$ together with structure maps
\[
\sigma_k : X_k \wedge S^2 \longrightarrow X_{k+1}
\]
such that the iterated structure maps
\[
\sigma_k^m : X_k \wedge S^{2m} \longrightarrow X_{k+m}
\]
are $(C_2 \ltimes (\U(k) \times \U(m)))$--equivariant.
\end{definition}

A map of Real spectra $f: X \longrightarrow Y$ is then a collection of maps $f_k : X_k \longrightarrow Y_k$ which are compatible with the structure maps in the usual sense. Hence there is a category of Real spectra, denoted $C_2 \ltimes \us$. Again this notation is deliberate as one should think of Real spectra as unitary spectra with an interwoven $C_2$--action.

\begin{prop} \label{prop: real spectra and E1}
The category of Real spectra, $C_2 \ltimes \us$ is equivalent to the category $\CE_1$.
\end{prop}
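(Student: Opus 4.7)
The plan is to construct mutually inverse functors between $\CE_1$ and $C_2 \ltimes \us$ using the identification of $\CE_1$ with the category of $1\mathbb{S}$--modules in $C_2\CI\T$ given in Subsection \ref{subsection: intermediate category}. Under this identification, a Real spectrum is morally the restriction of a $1\mathbb{S}$--module to the skeleton $\{\C^k\}_{k \geq 0}$ of $\CI$, so the task reduces to checking that this restriction and its Kan extension are mutually inverse equivalences.

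For the functor $\Phi: \CE_1 \to C_2 \ltimes \us$, I would set $\Phi(X)_k = X(\C^k)$. The $\U(k)$--action arises from the inclusion $\U(k) \hookrightarrow \CJ_1(\C^k, \C^k)$ coming from the zero section of the Thom space, and the $C_2$--action comes from the $C_2\T$--enrichment; together these combine into a $(C_2 \ltimes \U(k))$--action. The structure map $\sigma_k: X(\C^k) \wedge S^2 \to X(\C^{k+1})$ arises from the $C_2$--equivariant inclusion of $S^2$ into $\CJ_1(\C^k, \C^{k+1})$ as the Thom space of the one--dimensional complement over the standard inclusion $\C^k \hookrightarrow \C^{k+1}$, which is $C_2$--equivariant because the standard inclusion is fixed by complex conjugation. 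The required $(C_2 \ltimes (\U(k) \times \U(m)))$--equivariance of iterated structure maps then follows directly from the $C_2\T$--enriched functoriality of $X$ together with the equivariance of composition in $\CJ_1$.

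For the inverse functor $\Psi: C_2 \ltimes \us \to \CE_1$, a Real spectrum $Y$ provides exactly the data of a $1\mathbb{S}$--module in $C_2\CI\T$ on the skeleton: the $(C_2 \ltimes \U(k))$--action recovers the enriched functoriality at $\CI(\C^k, \C^k)$, and the iterated structure maps $Y_k \wedge S^{2m} \to Y_{k+m}$ together with their equivariance recover the module action $\mathbb{S}(\C^m) \wedge Y(\C^k) \to Y(\C^{k+m})$. Extending along the inclusion $\{\C^k\}_{k \geq 0} \hookrightarrow \CI$ via left Kan extension (equivalently, by choosing for each $V \in \CI$ an isomorphism $V \cong \C^{\dim V}$) then yields an object of $\CE_1$, with the iterated equivariance precisely guaranteeing that this extension is compatible with all isomorphisms in $\CI$ and independent of the choices of isomorphism up to canonical identification.

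The main technical point is verifying that the $(C_2 \ltimes (\U(k) \times \U(m)))$--equivariance of iterated structure maps in the definition of a Real spectrum captures exactly the coherence required to recover the $1\mathbb{S}$--module structure in $C_2\CI\T$, no more and no less. Once this matching is in place, $\Phi \circ \Psi \cong \id$ follows by direct evaluation on the skeleton, while $\Psi \circ \Phi \cong \id$ follows because any $C_2\T$--enriched functor from $\CJ_1$ to $C_2\T$ is determined up to natural isomorphism by its values on the skeleton $\{\C^k\}_{k \geq 0}$ together with its structure maps, which is precisely the data that $\Phi$ records.
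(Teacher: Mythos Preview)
Your proposal is correct and follows essentially the same approach as the paper. Your forward functor $\Phi$ is exactly the paper's $\mathbb{U}$, and your inverse $\Psi$ via left Kan extension along the skeleton inclusion is precisely the paper's explicit formula $(\mathbb{P}Y)(V) = \J^{\mathbf{R}}(\C^n,V)_+ \wedge_{\U(n)} Y_n$ for $\dim V = n$; the only difference is that the paper writes out this balanced product and the generalised structure maps $\sigma_{V,W}$ by hand rather than invoking Kan extension language.
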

\begin{proof}
We construct an inverse equivalence of categories. Define $\mathbb{U} : \CE_1 \longrightarrow C_2 \ltimes \us$ by
\[
(\mathbb{U}X)_n = X(\C^n).
\]
The space $(\mathbb{U}X)_n$ inherits a $(C_2 \ltimes \U(n))$--action from the evaluation maps
\[
\J^\mathbf{R}(V,W)_+ \wedge X(V) \longrightarrow X(W)
\]
under the special case $V = W = \C^n$. The iterated structure maps
\[
\sigma^m : (\mathbb{U}X)_n \wedge S^m \longrightarrow (\mathbb{U}X)_{n+m}
\]
are induced by the structure maps for $X$, and are appropriately $(C_2 \ltimes (\U(n)\times \U(m))$--equivariant by the special case $V=V'=\C^n$ and $W=W'=\C^m$ of the commuting of the diagram
\[
\xymatrix{
\J^\mathbf{R}(V, V')_+ \wedge \J^\mathbf{R}(W,W')_+ \wedge X(V) \wedge S^W \ar[r] \ar[d] & \J^\mathbf{R}(V \oplus W, V' \oplus W')_+ \wedge X(V \oplus W) \ar[d] \\
X(V') \wedge S^{W'} \ar[r] & X(V' \oplus W').
}
\]
In the other direction, define $\mathbb{P}: C_2 \ltimes \us \longrightarrow \CE_1$, by
\[
(\mathbb{P}Y)(V) = \J^\mathbf{R}(\C^n, V)_+ \wedge_{\U(n)} Y_n
\]
whenever $\dim(V)=n$. $\U(n)$ acts on $\J^\mathbf{R}(\C^n , V)$ by precomposition, $C_2$--acts diagonally on the smash product, and $(\mathbb{P}Y)(V)$ is then the coequaliser of the two $\U(n)$--actions on the smash product. Any choice of isometry $\varphi: \C^n \longrightarrow V$ defines a homeomorphism
\[
[\varphi, -] : Y_n \longrightarrow (\mathbb{P}Y)(V), x \mapsto [\varphi, x].
\]
The $C_2$--action is then given by $g [\varphi x] = [g\varphi, gx]$. Moreover, the iterated structure maps 
\[
\sigma^m : Y_n \wedge S^m \longrightarrow X_{n+m}
\]
are a special case of the generalised structure maps
\[
\sigma_{V,W} : (\mathbb{P}Y)(V) \wedge S^W \longrightarrow (\mathbb{P}Y)(V \oplus W)
\]
which are defined by setting $m= \dim(W)$ and choosing an isometry $\psi: \C^m \longrightarrow W$. Then
\[
\sigma_{V,W}([\varphi, x],w) = [\varphi \oplus \psi, \sigma^m(x \wedge \psi^{-1}(w))].
\]
By construction, the homeomorphism induced by $\varphi = \id$ demonstrates that $\mathbb{U}\mathbb{P} \cong \mathbbm{1}$, and $\mathbb{P}\mathbb{U} \cong \mathbbm{1}$.
\end{proof}

\subsection{The Quillen equivalence between $\CE_1[\U(n)]$ and spectra with an action of $C_2 \ltimes \U(n)$.}
Using the work of Schwede \cite[Example 7.11]{Sch19}, we give a Quillen equivalence between $\CE_1$ and the category of $C_2$--objects in orthogonal spectra, $\s^\mathbf{O}[C_2]$. 

Let $X \in \CE_1$, and define a functor $\psi : \CE_1 \longrightarrow \os[C_2]$ by
\[
\psi(X)(V) = \Omega^{iV} X(\C \otimes V) = \T(S^{iV}, X(\C \otimes V)).
\]

Assuming $\dim V=n$, then the group $C_2 \times \O(n)$ acts on $iV$ by the sign representation for the $C_2$--factor and via the regular representation for the $\O(n)$--factor. Moreover $C_2 \times \O(n)$ acts on $X(\C \otimes V)$ via restriction along the inclusion $C_2 \times \O(n) \hookrightarrow C_2 \ltimes \U(n)$, and hence $C_2 \times \O(n)$ acts on the mapping space via conjugation.

The description as a mapping space gives a clear description of the structure maps,
\[
\begin{split}
S^1 \wedge \T(S^{iV}, X(\C \otimes V)) &\xrightarrow{\text{assemble}} \T(S^{iV} , S^1 \wedge  X(\C \otimes V)) \\
									  &\xrightarrow{S^{i\R} \wedge -} \T(S^{i\R} \wedge S^{iV}, S^{iR} \wedge S^1 \wedge X(\C \otimes V)) \\
									  &\xrightarrow{\cong} \T(S^{i(\R \oplus V)}, S^2 \wedge X(\C \otimes V)) \\
									  &\xrightarrow{(\sigma_{\C \otimes V})_*} \T(S^{i(\R \oplus V)},  X(\C \otimes (V \oplus \R))) \\
									  &\xrightarrow{=} \psi(X)(V \oplus \R)\\
\end{split}
\]
where we use the $C_2$--equivariant decomposition $\R \oplus i\R \cong \C$ to identity $S^{\R \oplus i\R}$ with $S^\C$. The functor $\psi$ has a left adjoint, giving an adjunction between $\CE_1$ and $\s^\mathbf{O}[C_2]$.

\begin{prop}\label{prop: psi adjoint pair}
There is an adjoint pair
\[
\adjunction{L_{\psi}}{\os[C_2]}{\CE_1}{\psi}
\]
where
\[
L_\psi(X)(\C \otimes V) = \int^{U \in \J_1^\mathbf{O}} \CJ_1(\C \otimes U, \C \otimes V) \wedge X(U) \wedge S^{iU}.
\]
\end{prop}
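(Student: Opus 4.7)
The plan is to construct $L_\psi$ via the stated coend formula and then verify the adjunction by the standard enriched end/coend calculus, with the main work lying in tracking the $C_2$--equivariance throughout.

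First I would confirm that the coend formula
\[
L_\psi(X)(W) = \int^{U \in \J_1^\mathbf{O}} \CJ_1(\C \otimes U, W) \wedge X(U) \wedge S^{iU}
\]
defines an object of $\CE_1$. The coend is formed in $C_2\T$; the $C_2$--action on the integrand comes from three sources: complex conjugation on $\CJ_1(\C \otimes U, W)$ via the formula $g \cdot f = \overline{f(\overline{-})}$, the inherent $C_2$--action on $X(U)$, and the sign $C_2$--action on $S^{iU}$ coming from $C_2$ acting by $-1$ on $iU$. I would check these combine into a coherent diagonal action, that the $\CJ_1$--enriched functoriality in $W$ provides the structure maps of an object of $\CE_1$, and that the formula on objects of the form $\C \otimes V$ reproduces the one in the statement.

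Next I would establish the adjunction by the standard chain of coend/end manipulations. Given $X \in \os[C_2]$ and $Y \in \CE_1$, compute
\begin{align*}
\CE_1(L_\psi(X), Y)
&\cong \int_{U \in \J_1^\mathbf{O}} \CE_1\bigl(\CJ_1(\C \otimes U, -) \wedge X(U) \wedge S^{iU},\, Y\bigr) \\
&\cong \int_{U \in \J_1^\mathbf{O}} C_2\T\bigl(X(U) \wedge S^{iU},\, Y(\C \otimes U)\bigr) \\
&\cong \int_{U \in \J_1^\mathbf{O}} C_2\T\bigl(X(U),\, \Omega^{iU} Y(\C \otimes U)\bigr) \\
&\cong \os[C_2](X, \psi(Y)),
\end{align*}
where the first isomorphism is the universal property of the coend, the second uses the $C_2\T$--enriched Yoneda lemma together with the defining adjunction of the external smash, the third is the standard $S^{iU}$--suspension/loop adjunction (respecting the sign $C_2$--action on $iU$), and the fourth is the definition of $\psi$ and the end description of natural transformations in $\os[C_2]$. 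To identify this end with morphisms of orthogonal spectra, one uses that an end over $\J_1^\mathbf{O}$ together with compatibility under the evaluation maps $S^V \wedge X(U) \to X(U \oplus V)$ (equivalently, compatibility with the structure maps on both sides) is precisely the data of a map of orthogonal spectra with $C_2$--action.

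The main obstacle is verifying that every step of this calculation respects the $C_2$--actions on the nose, in particular that the $S^{iU}$--suspension/loop adjunction is $C_2$--equivariant when $C_2$ acts by the sign representation on $iU$ and via conjugation on $\Omega^{iU}$, and that the structure maps of $\psi(X)$ as constructed earlier in the section are compatible with the evaluation maps of $L_\psi$. Once these equivariance checks are carried out, functoriality of the adjunction isomorphism in $X$ and $Y$ is automatic from the end/coend manipulations, which yields the claimed adjoint pair.
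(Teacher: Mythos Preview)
Your proposal is correct and is precisely the standard calculus of (co)ends argument that the paper invokes; the paper's own proof consists of the single sentence ``A standard calculus of (co)ends argument verifies the claim,'' and you have simply spelled out that argument with the appropriate $C_2$--equivariance checks.
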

\begin{proof} A standard calculus of (co)ends argument verifies the claim. 
\end{proof}

This adjunction produces a Quillen equivalence between $\CE_1$ and $\s^\mathbf{O}[C_2]$.

\begin{prop}\label{prop: QE between orth spec and CE_1}
The adjoint pair
\[
\adjunction{L_{\psi}}{\os[C_2]}{\CE_1}{\psi}
\]
is a Quillen equivalence when both categories are equipped with their stable model structures.
\end{prop}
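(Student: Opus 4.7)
The plan is to follow the standard two--step approach for Quillen equivalences of stable model structures on functor categories, in the spirit of \cite{BO13, Ta19} and the proof of Theorem \ref{thm: intermediate cat as E_1}. First, verify that $(L_\psi, \psi)$ is a Quillen adjunction between the stable model structures. Second, verify the derived unit condition on a set of homotopically compact generators of $\os[C_2]$.

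For the Quillen adjunction, since $\psi(X)(V) = \Omega^{iV}X(\C \otimes V)$, the functor $\psi$ preserves levelwise Serre fibrations and levelwise weak equivalences, as $\Omega^{iV}$ is right Quillen on $(C_2 \times \O(n))\T$. Because the acyclic fibrations of the stable model structure on $\CE_1$ coincide with the levelwise acyclic fibrations, $\psi$ preserves acyclic fibrations. For stable fibrations, the characterising homotopy pullback square involving the adjoint structure maps is preserved by $\psi$ because $\Omega^{iV}$ commutes with homotopy pullbacks.

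For the derived unit, $\os[C_2]$ is homotopically compactly generated by shifts of free orthogonal $C_2$--spectra of the form $F_V((C_2)_+)$ for $V \in \J^\mathbf{O}$, so it suffices to check that the derived unit $X \to \psi((L_\psi X)^{\mathrm{fib}})$ is a stable equivalence on each such generator. Plugging a free generator into the coend formula for $L_\psi$ and applying a Yoneda reduction over $\J^\mathbf{O}$ identifies $L_\psi(X)$ with a corresponding free object of $\CE_1$; applying $\psi$ followed by a stable fibrant replacement then recovers $X$ up to stable equivalence. The main obstacle will be the careful bookkeeping of the $C_2$--equivariance throughout this coend computation, in particular verifying that the comparison map is suitably $(C_2 \times \O(n))$--equivariant via the decomposition $\R \oplus i\R \cong \C$ used in the structure maps of $\psi$. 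This is precisely where the `with reality' structure interacts non--trivially with the orthogonal structure, distinguishing this Quillen equivalence from its unitary analogue \cite[Theorem 6.8]{Ta19}; as a cross--check, one may instead appeal to Proposition \ref{prop: real spectra and E1} together with Schwede's Quillen equivalence between Real spectra and orthogonal $C_2$--spectra in \cite[Example 7.11]{Sch19}.
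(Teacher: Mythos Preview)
Your outline is essentially the paper's, but one ingredient is missing from your two--step plan. Verifying the derived unit on a set of homotopically compact generators is not, on its own, enough to conclude a Quillen equivalence; you must also show that the right adjoint reflects weak equivalences (between fibrant objects), or equivalently that the derived left adjoint carries your generators to a generating set of $\CE_1$. The paper handles this explicitly: since $\psi(X)(\R^q)=\Omega^{i\R^q}X(\C^q)$, one has
\[
\pi_k(\psi X)=\underset{q}{\colim}\ \pi_{k+q}\,\Omega^{i\R^q}X(\C^q)\cong\underset{q}{\colim}\ \pi_{k+2q}X(\C^q)=\pi_k X,
\]
so $\psi$ reflects $\pi_*$--isomorphisms. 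Only after this does the paper invoke the compact--generator reduction (via \cite[Lemma~3.2]{Ke17}) for the derived unit, checking it on the single generator $\Sigma^\infty_+C_2$ rather than on all $F_V((C_2)_+)$.

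Your verification of the Quillen adjunction via preservation of all stable fibrations is fine; the paper instead uses Hirschhorn's criterion \cite[Proposition~8.5.4, Lemma~7.7.1]{Hi03} and checks only that $\psi$ preserves acyclic fibrations and fibrant objects, the latter via the decomposition $\C\cong\R\oplus i\R$ that you correctly identify as the key equivariant input.
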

\begin{proof}
By \cite[Proposition 8.5.4 and Lemma 7.7.1]{Hi03}, in order to to exhibit a Quillen adjunction, it suffices to show that the right adjoint preserves acyclic fibrations and fibrant objects. Let $f: X \longrightarrow Y$ be an acyclic fibration in $\CE_1$. Then, $f:X \longrightarrow Y$ is a levelwise acyclic fibration. By construction $\psi(f) : \psi X \longrightarrow \psi Y$ will also be a levelwise acyclic fibration, and hence an acyclic fibration in $\os[C_2]$.

Now, let $X$ be a fibrant object in $\CE_1$, then $X(V) \simeq \Omega^{\C}X(V \oplus \C)$ for all $V \in \CJ_1$. It follows that
\[
\begin{split}
\psi(X)(U) = \Omega^{iU}X(\C \otimes U) &\simeq \Omega^{iU} \Omega^{\C} X((\C \otimes U) \oplus \C) \\
									    &\simeq \Omega^{iU} \Omega^{\R \oplus i\R} X(\C \otimes (U \oplus \R))\\
									    &\simeq \Omega^\R \Omega^{i(U \oplus \R)} X(\C \otimes (U \oplus \R)) \\
									    &\simeq \Omega^\R \psi(U \oplus \R),\\
\end{split}
\]
and hence, the right adjoint preserves fibrant objects.

For the Quillen equivalence, it is left to show that the right adjoint reflects weak equivalences and that the derived unit is an isomorphism. For the first, suppose that $f: X \longrightarrow Y$ is a map in $\CE_1$ such that $\psi(f): \psi X \longrightarrow \psi Y$ is a $\pi_*$--isomorphism of spectra. Then
\[
\begin{split}
\pi_k (\psi X) &= \underset{q}{\colim}~ \pi_{k+q} (\psi X(\R^q)) = \underset{q}{\colim}~ \pi_{k+q}~ (\Omega^{i\R^q} X(\C \otimes \R^q)) \cong \underset{q}{\colim}~ \pi_{k+2q}~ (\psi X(\C^q))= \pi_k (X).
\end{split}
\]
A similar argument yields $\pi_* (\psi Y) \cong \pi_* Y$, and hence $\psi$ reflects weak equivalences.

To show that the derived unit is an isomorphism, it suffices, by \cite[Lemma 3.2.]{Ke17}, to show that the derived unit on the homotopically compact generator $\Sigma^\infty_+ C_2$ of $\os[C_2]$. Plugging the homotopically compact generator $\Sigma^\infty_+ C_2$ into the definition of the left adjoint as a coend produces a levelwise equivalence with $(C_2)_+ \wedge \J_1^\mathbf{R}(0,-)$, the compact generator of $\CE_1$, which after applying the right adjoint is stably equivalent to $\Sigma^\infty_+ C_2$.
\end{proof}

This Quillen equivalence extends to a Quillen equivalence between $\CE_1[\U(n)]$ and $\os[C_2 \ltimes \U(n)]$. The right adjoint $\psi$ may be constructed $(C_2 \ltimes \U(n))$--equivariantly and hence the left adjoint may be constructed to take this $(C_2 \ltimes \U(n))\T$--enrichment into account.

\begin{lem}
There is an adjoint pair
\[
\adjunction{L_\psi}{\os[C_2 \ltimes \U(n)]}{\CE_1[\U(n)]}{\psi}
\]
where $\psi : \CE_1[\U(n)] \longrightarrow \os[C_2 \ltimes \U(n)]$ is given by
\[
\psi(X)(V) =  \T(S^{iV}, X(\C \otimes V)).
\]
\end{lem}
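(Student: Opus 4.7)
The plan is to upgrade Proposition \ref{prop: psi adjoint pair} to the $\U(n)$-equivariant setting by showing that every step in the construction is natural in an extra $\U(n)$-action that is entirely independent of the $C_2 \ltimes \O(k)$-structure already accounted for. Concretely, the $\U(n)$-action on an object $X \in \CE_1[\U(n)]$ is an action through $\CE_1$-morphisms, so for each $V \in \CJ_1$ it acts on $X(V)$ by maps which commute with the internal $\CJ_1$-enrichment and, in particular, with the evaluation and structure maps of $X$. Since $\U(n)$ acts trivially on the representation spheres $S^{iV}$, conjugation endows the mapping space $\T(S^{iV}, X(\C \otimes V))$ with a $\U(n)$-action, and this action commutes with the $C_2 \times \O(k)$-action constructed in the proof of Proposition \ref{prop: psi adjoint pair}. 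The combined action assembles into a $(C_2 \ltimes \U(n))$-action making $\psi(X)$ an object of $\os[C_2 \ltimes \U(n)]$, and the structure maps given explicitly in the proof of Proposition \ref{prop: psi adjoint pair} are $\U(n)$-equivariant because each of their constituent maps is.

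Next I would define the left adjoint by the same coend formula
\[
L_\psi(X)(\C \otimes V) = \int^{U \in \J_1^\mathbf{O}} \CJ_1(\C \otimes U, \C \otimes V) \wedge X(U) \wedge S^{iU},
\]
now interpreted in the $(C_2 \ltimes \U(n))\T$-enriched sense: $\U(n)$ acts diagonally on the smash product via its action on $X(U)$ (trivially on the other factors) and on the coend through the standard construction described in Remark \ref{rem: C2 action on holim} and its dual. The functoriality in $V$ is via post-composition in $\CJ_1(\C \otimes U, \C \otimes V)$, which is $(C_2 \ltimes \U(n))$-equivariant because the $\U(n)$-action is trivial on both the source and target of $\CJ_1(\C \otimes U, \C \otimes V)$. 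Thus $L_\psi(X)$ lies in $\CE_1[\U(n)]$.

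To establish the adjunction I would run the usual calculus of ends and coends: a map $L_\psi(X) \to Y$ in $\CE_1[\U(n)]$ is, by the universal property of the coend, a $(C_2 \ltimes \U(n))\T$-natural family of maps
\[
\CJ_1(\C \otimes U, \C \otimes V) \wedge X(U) \wedge S^{iU} \longrightarrow Y(\C \otimes V),
\]
which by the enriched Yoneda lemma in the $V$-variable is the same datum as a $(C_2 \ltimes \U(n))$-equivariant map $X(U) \wedge S^{iU} \to Y(\C \otimes U)$, natural in $U$. Taking mapping-space adjoints produces a $(C_2 \ltimes \U(n))$-equivariant natural transformation $X(U) \to \T(S^{iU}, Y(\C \otimes U)) = \psi(Y)(U)$; one then checks that this correspondence preserves the orthogonal-spectrum structure maps, which amounts to exactly the same adjunction juggling as in Proposition \ref{prop: psi adjoint pair}, now carried out $\U(n)$-equivariantly.

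The main obstacle is purely bookkeeping: verifying that the $(C_2 \ltimes \U(n))$-equivariance is preserved through each step, in particular that the $\U(n)$-action commutes with the $C_2$-action coming from the decomposition $\C \cong \R \oplus i\R$ (this is automatic because $\U(n)$ acts only on the $X(\C \otimes V)$-factor, with respect to which $C_2$ acts by a genuine morphism in $\CE_1[\U(n)]$, i.e. a $\U(n)$-equivariant map). Once this is verified, the adjunction formula is forced by the same coend/Yoneda computation as in the non-equivariant case, and no new analytic input is needed.
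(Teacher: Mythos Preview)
Your proposal is correct and follows essentially the same approach as the paper: verify that the construction of $\psi$ and its structure maps from Proposition~\ref{prop: psi adjoint pair} are $(C_2 \ltimes \U(n))$--equivariant (because the extra $\U(n)$--action is through $\CE_1$--morphisms and hence commutes with all structure), and then observe that the coend description of $L_\psi$ and the adjunction argument go through verbatim in the enriched setting. The paper's own proof is a terse version of exactly this, simply noting that the structure maps of $\psi X$ inherit equivariance from those of $X$ and deferring the rest to Proposition~\ref{prop: psi adjoint pair}; you have spelled out the bookkeeping in more detail than the paper does, but the underlying argument is the same.
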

\begin{proof}
The functor $\psi$ is well defined, as it's structure maps of $\psi X$, are suitably $(C_2 \ltimes \U(n))$--equivariant. To see this, notice that the structure maps of $\psi X$ are defined using precomposition with the structure maps of $X$, which are known to be $C_2 \ltimes \U(n))$--equivariant. The construction of the left adjoint and the proof of the adjunction then follows similarly to Proposition \ref{prop: psi adjoint pair}.
\end{proof}

The Quillen equivalence from Proposition \ref{prop: QE between orth spec and CE_1} extends to a Quillen equivalence of these categories since all homotopical considerations are given by the underlying (non--equivariant) homotopy theory.

\begin{thm}\label{thm: E_1 as OS}
The adjoint pair
\[
\adjunction{L_{\psi}}{\os[C_2 \ltimes \U(n)]}{\CE_1[\U(n)]}{\psi}
\]
is a Quillen equivalence when both categories are equipped with their stable model structures.
\end{thm}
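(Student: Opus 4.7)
The plan is to imitate the proof of Proposition \ref{prop: QE between orth spec and CE_1} step by step, observing that every homotopical condition we had to verify there was detected by the underlying non-equivariant spectrum, so the presence of the $(C_2\ltimes \U(n))$-action changes nothing at the level of weak equivalences. First I would check that $(L_\psi,\psi)$ is a Quillen adjunction. Weak equivalences and fibrations in both $\os[C_2\ltimes\U(n)]$ and $\CE_1[\U(n)]$ are created by forgetting the group action, so if $f\colon X\to Y$ is an acyclic fibration in $\CE_1[\U(n)]$, then $\psi(f)$ is levelwise an acyclic fibration of spaces (simply the map $\Omega^{iV}f(\C\otimes V)$), and the $(C_2\ltimes\U(n))$-equivariance of the structure maps already built into $\psi$ makes it an acyclic fibration in $\os[C_2\ltimes\U(n)]$. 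The same argument shows $\psi$ takes fibrant objects to fibrant objects, using the computation from Proposition \ref{prop: QE between orth spec and CE_1} that $\psi(X)(U)\simeq \Omega^\R\psi(X)(U\oplus\R)$ whenever $X$ is an $\Omega$-object of $\CE_1$.

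Next I would show that $\psi$ reflects weak equivalences. A map $f\colon X\to Y$ in $\CE_1[\U(n)]$ is a $\pi_*$-isomorphism if and only if its underlying map in $\CE_1$ is, and similarly a map of $(C_2\ltimes\U(n))$-orthogonal spectra is a weak equivalence if and only if its underlying orthogonal spectrum map is. Since $\psi$ commutes with forgetting the group action, the cofinality/colimit computation
\[
\pi_k(\psi X) \;=\; \underset{q}{\colim}\;\pi_{k+q}\,\Omega^{i\R^q}X(\C\otimes \R^q) \;\cong\; \underset{q}{\colim}\;\pi_{k+2q}\,X(\C^q) \;=\; \pi_k(X)
\]
from the proof of Proposition \ref{prop: QE between orth spec and CE_1} goes through verbatim, and gives the reflection property.

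Finally, I would verify the derived unit condition. The stable model structure on $\os[C_2\ltimes\U(n)]$ is homotopically compactly generated by $\Sigma^\infty_+(C_2\ltimes\U(n))$, so by the criterion used in the proof of Proposition \ref{prop: QE between orth spec and CE_1} (namely \cite[Lemma 3.2]{Ke17}) it suffices to check the derived unit is an equivalence on this single generator. But $\Sigma^\infty_+(C_2\ltimes\U(n))$ is simply $(C_2\ltimes\U(n))_+\wedge \Sigma^\infty_+$, that is, a free $(C_2\ltimes\U(n))$-object on the compact generator of $\os$; applying $L_\psi$ then $\psi$ produces (by the coend formula for $L_\psi$ and naturality in the group action) the free $(C_2\ltimes\U(n))$-object on the corresponding image in $\CE_1$, which is stably equivalent to $\Sigma^\infty_+(C_2\ltimes \U(n))$ by the non-equivariant case.

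The main obstacle is essentially bookkeeping: verifying that all of the maps constructed in the proof of Proposition \ref{prop: QE between orth spec and CE_1}, in particular the coend formula for $L_\psi$ and the decomposition $S^{\R\oplus i\R}\cong S^\C$ used in the structure maps of $\psi X$, are $(C_2\ltimes\U(n))$-equivariant with respect to the actions we have specified. Once this is recorded — and for the structure map decomposition it is an immediate consequence of the fact that $\C=\R\oplus i\R$ is a $C_2$-equivariant decomposition with trivial $\U(n)$-action on both summands — the non-equivariant proof transfers without modification.
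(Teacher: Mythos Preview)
Your proposal is correct and follows essentially the same approach as the paper: both reduce the Quillen adjunction and reflection of weak equivalences to the non-equivariant case of Proposition \ref{prop: QE between orth spec and CE_1}, and both verify the derived unit on the homotopically compact generator $\Sigma^\infty_+(C_2\ltimes\U(n))$ via \cite[Lemma 3.2]{Ke17}. The paper is slightly more explicit about identifying $L_\psi(\Sigma^\infty_+(C_2\ltimes\U(n)))$ with $(C_2\ltimes\U(n))_+\wedge \CJ_1(0,-)$ and using a commutative triangle to conclude, but your free-object argument amounts to the same thing.
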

\begin{proof}
The Quillen adjunction follows from Proposition \ref{prop: QE between orth spec and CE_1}, as does the fact that the right adjoint reflects weak equivalences. It is left to show that the unit is a derived isomorphism. Indeed, the left adjoint applied to the homotopically compact generator $\Sigma^\infty_+(C_2 \ltimes \U(n))$ is isomorphic $(C_2 \ltimes \U(n))_+ \wedge \CJ_1(0,-)$ in $\CE_1[\U(n)]$. Hence, it suffices (\cite[Lemma 3.2]{Ke17}) to check that the derived unit is an isomorphism on the homotopically compact generator. The unit map
\[
\Sigma^\infty_+(C_2 \ltimes \U(n))(V) \xrightarrow{\qquad \eta \qquad} \Omega^{iV} \left( \int^{U \in \J_0^\mathbf{O}} \J_1^{\mathbf{R}}(\C \otimes U, \C \otimes V) \wedge S^U \wedge (C_2 \ltimes \U(n))_+ \wedge S^{iU} \right)
\]
is induced by the unit of the $(\Sigma, \Omega)$-adjunction and the map into the coend for the case $V=U$. There is a commutative diagram 
\[
\xymatrix{
\Sigma^\infty_+(C_2 \ltimes \U(n)) \ar[r]^\eta \ar[dr] & \psi L_\psi(\Sigma^\infty_+(C_2 \ltimes \U(n)) \ar[d] \\
& \psi(\J_1^\mathbf{R}(0,-) \wedge(C_2 \ltimes \U(n))_+)
}
\]
where the vertical map is induced by the isomorphism $L_\psi(\Sigma^\infty_+(C_2 \ltimes \U(n)))(V) \cong \J_1^\mathbf{R}(0,V) \wedge (C_2 \ltimes \U(n))_+ $, and the diagonal map is a stable equivalence. It follows that the unit of the adjunction is also a stable equivalence. 
\end{proof}

\section{Differentiation as a Quillen Functor}\label{section: diff as QF}
With the model structures for calculus with reality in place, we can show that the differentiation functor, $\ind_0^n \varepsilon^*$, is a right Quillen functor as part of a Quillen equivalence between the $n$--homogeneous model structure on the category of functors with reality, $n\homog\CE_0$, and the $n$--stable model structure on the intermediate category $C_2 \ltimes \U(n)\E_n^\mathbf{R}$. The process of constructing such a Quillen equivalence is similar to \cite[Section 9]{BO13} and \cite[Section 7]{Ta19}. This Quillen equivalence will further allow for the classification of $n$--homogeneous functors in terms of orthogonal spectra with an action of $C_2 \ltimes \U(n)$, again, similarly to \cite[Theorem 7.3]{We95}, \cite[Theorem 8.1]{Ta19}.

We start by proving a Quillen adjunction between the underlying projective model structures on $\CE_0$ and $C_2 \ltimes \U(n)\E_n^\mathbf{R}$, then extend this, via the interplay between Bousfield localisations and Quillen adjunctions of Hirschhorn \cite[Chapter 3]{Hi03}, to a Quillen adjunction between the $n$--homogeneous model structure, $n\homog\CE_0$, and the $n$--stable model structure on $C_2 \ltimes \U(n)\E_n^\mathbf{R}$.

\begin{lem}\label{lem: QA between projective model structures}
For $n\geq 0$, there is a Quillen adjunction
\[
\adjunction{\res_0^n/\U(n)}{C_2 \ltimes \U(n)\E_n^\mathbf{R}}{\CE_0}{\ind_0^n \varepsilon^*}
\]
where both categories are equipped with the projective model structure.
\end{lem}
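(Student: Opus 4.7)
The plan is to verify the Quillen adjunction by invoking the standard criterion for cofibrantly generated adjunctions: it suffices to show that the left adjoint $\res_0^n/\U(n)$ carries each generating cofibration to a cofibration and each generating acyclic cofibration to an acyclic cofibration in the projective model structure on $\CE_0$. The generating cofibrations of the projective model structure on $C_2 \ltimes \U(n)\E_n^\mathbf{R}$ take the form
\[
\CJ_n(U,-) \wedge (C_2 \ltimes \U(n))_+ \wedge i
\]
for $U \in \CJ_n$ and $i : A \longrightarrow B$ in $I$.

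The first step is to unpack the image under $\res_0^n/\U(n)$. Exploiting that $\U(n)$ is a normal subgroup of $C_2 \ltimes \U(n)$ with quotient $C_2$ and that $\U(n)$ acts freely on $(C_2 \ltimes \U(n))_+$ via the subgroup inclusion, a direct verification using coset representatives identifies the orbit space levelwise as
\[
\bigl(\CJ_n(U,V) \wedge (C_2 \ltimes \U(n))_+\bigr)/\U(n) \cong \CJ_n(U,V) \wedge (C_2)_+
\]
for $V \in \CJ_0$, giving the image as a morphism of the form $X_U \wedge i$ in $\CE_0$, where $X_U = \res_0^n(\CJ_n(U,-)) \wedge (C_2)_+$.

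The second step is to check that $X_U$ is projectively cofibrant in $\CE_0$, so that the pushout-product axiom applied to $\emptyset \to X_U$ and $i : A \to B$ produces a cofibration. Cofibrancy of $\res_0^n(\CJ_n(U,-))$ in $\CE_0$ is precisely Corollary \ref{domains and codomains of S_n are cofibrant}, established via an induction on $n$ using the $C_2$-equivariant cofibre sequence of Proposition \ref{prop: cofibre sequence of jet categories} together with the sphere bundle description of Proposition \ref{hocolim sphere homeo}; smashing with $(C_2)_+$ preserves cofibrancy. The argument for generating acyclic cofibrations is formally identical, substituting $j \in J$ for $i \in I$.

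The main obstacle is the careful bookkeeping of the diagonal $\U(n)$-action on $\CJ_n(U,V) \wedge (C_2 \ltimes \U(n))_+$ and the identification of the resulting quotient as a $C_2$-space with the correct complex conjugation action on $\CJ_n(U,V)$. Once this is handled, the argument is formally identical to the unitary calculus analogue \cite[Section 7]{Ta19}, with the requisite $C_2$-equivariance of all constructions already established in Section \ref{section: equivariant stiefel combinatorics}.
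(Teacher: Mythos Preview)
Your proposal is correct and follows essentially the same approach as the paper: both invoke the cofibrantly generated criterion (the paper cites \cite[Lemma 2.1.20]{Ho99}), identify the generating (acyclic) cofibrations of $C_2 \ltimes \U(n)\E_n^\mathbf{R}$, and reduce to the cofibrancy of $\res_0^n\CJ_n(U,-)$ in $\CE_0$ via Corollary~\ref{domains and codomains of S_n are cofibrant}. Your version simply makes explicit the identification of the $\U(n)$-orbits and the appeal to the pushout-product axiom, which the paper leaves implicit.
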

\begin{proof}
Both model structures are cofibrantly generated, hence by \cite[Lemma 2.1.20]{Ho99} it suffices to show that the left adjoint preserves the generating (acyclic) cofibrations.

The generating (acyclic) cofibrations of the projective model structure on $C_2 \ltimes \U(n)\E_n^\mathbf{R}$ are of the form
\[
(C_2 \ltimes \U(n))_+ \wedge \J_n(U,-)  \wedge i
\]
where $i$ is a generating (acyclic) cofibration of the projective model structure on $\T$. The result follows since $\res_0^n \J_n(U,-)$ is cofibrant in $\CE_0$, by Corollary \ref{domains and codomains of S_n are cofibrant}.
\end{proof}

Using the composition of Quillen adjunctions, we achieve the following extension of Lemma \ref{lem: QA between projective model structures} to the $n$--polynomial model structure.

\begin{lem}\label{lem: QA with projective and n--poly models}
For $n\geq 0$, there is a Quillen adjunction
\[
\adjunction{\res_0^n/\U(n)}{C_2 \ltimes \U(n)\E_n^\mathbf{R}}{n\poly\CE_0}{\ind_0^n \varepsilon^*},
\]
where $C_2 \ltimes \U(n)\E_n^\mathbf{R}$ is equipped with the projective model structure.
\end{lem}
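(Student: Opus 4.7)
The plan is to obtain the stated Quillen adjunction as a composite of two Quillen adjunctions. From Lemma \ref{lem: QA between projective model structures} we already have a Quillen adjunction
\[
\adjunction{\res_0^n/\U(n)}{C_2 \ltimes \U(n)\E_n^\mathbf{R}}{\CE_0}{\ind_0^n \varepsilon^*}
\]
with $\CE_0$ equipped with its projective (levelwise) model structure. By Proposition \ref{prop: n--poly model structure}, the $n$--polynomial model structure on $\CE_0$ is the left Bousfield localisation of the projective model structure at the set $\mathcal{S}_n$, so in particular it has the same cofibrations as the projective model structure and a larger class of weak equivalences.

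The key observation is that in any left Bousfield localisation $L_S\M$ of $\M$, the identity functor $\id : \M \to L_S\M$ is a left Quillen functor (it sends cofibrations to cofibrations, and trivial cofibrations of $\M$ remain cofibrations which are, a fortiori, weak equivalences of $L_S\M$), with right adjoint $\id : L_S\M \to \M$. Applying this to our situation gives a Quillen adjunction
\[
\adjunction{\id}{\CE_0}{n\poly\CE_0}{\id}.
\]

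The next step is to compose. Since the composition of left Quillen functors is left Quillen and, dually, the composition of right Quillen functors is right Quillen, composing
\[
\xymatrix{
C_2 \ltimes \U(n)\E_n^\mathbf{R} \ar@<0.7ex>[r]^{\res_0^n/\U(n)} & \CE_0 \ar@<0.7ex>[l]^{\ind_0^n\varepsilon^*} \ar@<0.7ex>[r]^{\id} & n\poly\CE_0 \ar@<0.7ex>[l]^{\id}
}
\]
yields the desired Quillen adjunction with underlying functors $\res_0^n/\U(n)$ and $\ind_0^n \varepsilon^*$, now with $\CE_0$ carrying the $n$--polynomial model structure.

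There is no real obstacle here; the argument is purely formal once one has Lemma \ref{lem: QA between projective model structures} together with the fact that the $n$--polynomial model structure is a left Bousfield localisation (Proposition \ref{prop: n--poly model structure}). The only thing worth remarking on explicitly is that the direction of the localisation adjunction $\id : \CE_0 \to n\poly\CE_0$ must be the left Quillen direction, which is exactly what allows it to be composed with the left adjoint $\res_0^n/\U(n)$.
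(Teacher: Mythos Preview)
Your proof is correct and follows essentially the same approach as the paper: both argue by composing the Quillen adjunction of Lemma \ref{lem: QA between projective model structures} with the identity Quillen adjunction $\CE_0 \rightleftarrows n\poly\CE_0$ arising from the fact that the $n$--polynomial model structure is a left Bousfield localisation of the projective one. The only difference is cosmetic: the paper cites Hirschhorn and Hovey for the two ingredients, whereas you spell out directly why the identity is left Quillen and why composites of Quillen adjunctions are Quillen.
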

\begin{proof}
The $n$--polynomial model structure is a left Bousfield localisation of the underlying model structure on $\CE_0$, hence by \cite[Proposition 3.3.4]{Hi03}, there is a Quillen adjunction
\[
\adjunction{\mathbbm{1}}{\CE_0}{n\poly\CE_0}{\mathbbm{1}}.
\]
Composition of this Quillen adjunction with the Quillen adjunction of Lemma \ref{lem: QA between projective model structures} results in the desired Quillen adjunction by \cite[Subsection 1.3.1]{Ho99}.
\end{proof}

Localisation theorems of Hirschhorn \cite[Theorem 3.1.6 and Proposition 3.3.18]{Hi03}, give criteria for when Quillen adjunctions may be extended to left or right Bousfield localisations. As such, Lemma \ref{lem: QA with projective and n--poly models} may be extended to the $n$--stable model structure.

\begin{lem}\label{lem: QA between n--stable and n--poly}
For $n\geq 0$, there is a Quillen adjunction
\[
\adjunction{\res_0^n/\U(n)}{C_2 \ltimes \U(n)\E_n^\mathbf{R}}{n\poly\CE_0}{\ind_0^n \varepsilon^*}
\]
where $C_2 \ltimes \U(n)\E_n^\mathbf{R}$ is equipped with the $n$--stable model structure.
\end{lem}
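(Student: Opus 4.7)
The plan is to invoke Hirschhorn's descent theorem for Quillen adjunctions under left Bousfield localisation, \cite[Theorem 3.3.20]{Hi03}. The $n$--stable model structure on $C_2 \ltimes \U(n)\E_n^\mathbf{R}$ is by definition a left Bousfield localisation of the projective model structure at the set $\{k_{V,W}^n\}$, so starting from the Quillen adjunction of Lemma \ref{lem: QA between n--stable and n--poly} (where the domain carries the projective model structure), the descent to the $n$--stable localisation holds if and only if the right adjoint $\ind_0^n \varepsilon^*$ sends fibrant objects of $n\poly\CE_0$ to fibrant objects of the $n$--stable model structure on $C_2 \ltimes \U(n)\E_n^\mathbf{R}$.

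I would proceed via this right--adjoint criterion, as it is most readily verified from results already in hand. A fibrant object $F$ of $n\poly\CE_0$ is an $n$--polynomial functor which is levelwise fibrant in $\CE_0$, whereas a fibrant object of the $n$--stable model structure is a levelwise fibrant $n\Omega$--spectrum. Given such an $F$, levelwise fibrancy of $\ind_0^n \varepsilon^* F$ is immediate from Lemma \ref{lem: QA between projective model structures}: the right adjoint of that Quillen adjunction between projective model structures preserves levelwise fibrations, hence fibrant objects. The $n\Omega$--spectrum condition, meanwhile, is essentially the content of Lemma \ref{derivatives of n--poly are stable}: for $F$ $n$--polynomial and $V \in \CJ_0$, the adjoint structure map
\[
\ind_0^n \varepsilon^* F(V) \longrightarrow \Omega^{2n} \ind_0^n \varepsilon^* F(V \oplus \C)
\]
is a weak equivalence, and iterating this yields the full $n\Omega$--spectrum condition $X(U) \longrightarrow \Omega^{nW} X(U \oplus W)$ for all $U,W \in \CJ_n$, since every such $W$ is of the form $\C^k$.

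The main obstacle here is essentially administrative and amounts to identifying the right adjoint $\ind_0^n \varepsilon^*$ constructed in Section \ref{subsection: intermediate category} with the $n$--th derivative functor as it appears in Lemma \ref{derivatives of n--poly are stable}. Since the lemma is a statement about underlying (non--equivariant) spaces and their adjoint structure maps, and the two functors agree after forgetting the $\U(n)$--equivariance, the cited result applies as stated; the $\U(n)$--equivariance is irrelevant to the $n\Omega$--spectrum condition itself and is controlled entirely by the projective model structure, so no new equivariant computation is required.
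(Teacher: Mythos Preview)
Your proposal is correct and follows essentially the same approach as the paper: invoke Hirschhorn's result on descending Quillen adjunctions to a left Bousfield localisation by checking that the right adjoint sends fibrant objects (here, $n$--polynomial functors) to fibrant objects (here, $n\Omega$--spectra), and then appeal to Lemma \ref{derivatives of n--poly are stable}. One minor slip: when you write ``starting from the Quillen adjunction of Lemma \ref{lem: QA between n--stable and n--poly}'' you are citing the very lemma under proof; the parenthetical makes clear you intend the adjunction with the projective structure on the domain, so the reference should be to Lemma \ref{lem: QA with projective and n--poly models}.
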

\begin{proof}
By \cite[Theorem 3.1.6, Proposition 3.3.18]{Hi03}, it suffices to show that the right adjoint sends fibrant objects in the n--polynomial model structure to $n\Omega$--spectra, i.e. that the derivative of an $n$--polynomial functor is an $n\Omega$--spectrum. This is precisely the content of Lemma \ref{derivatives of n--poly are stable}.
\end{proof}

The $n$--homogeneous model structure was constructed as a right Bousfield localisation of the $n$--polynomial model structure in Proposition \ref{prop: homogeneous model structure with reality}. As such, we can extend Lemma \ref{lem: QA between n--stable and n--poly}, again using \cite[Proposition 3.3.18]{Hi03}, to the $n$--homogeneous model structure.

\begin{lem}\label{lem: QA between n--stable and n--homog}
For $n\geq 0$, there is a Quillen adjunction
\[
\adjunction{\res_0^n/\U(n)}{C_2 \ltimes \U(n)\E_n^\mathbf{R}}{n\homog\CE_0}{\ind_0^n \varepsilon^*}
\]
where $C_2 \ltimes \U(n)\E_n^\mathbf{R}$ is equipped with the $n$--stable model structure.
\end{lem}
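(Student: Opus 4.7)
The plan is to extend the Quillen adjunction of Lemma \ref{lem: QA between n--stable and n--poly} across the right Bousfield localisation defining the $n$--homogeneous model structure, by invoking \cite[Proposition 3.3.18]{Hi03}. By Proposition \ref{prop: homogeneous model structure with reality}, the $n$--homogeneous model structure on $\CE_0$ is the right Bousfield localisation of the $n$--polynomial model structure at the set $\mathcal{K}_n = \{\CJ_n(V,-) \mid V \in \CJ_0\}$, so the general Hirschhorn machinery for transferring Quillen adjunctions across right Bousfield localisations is directly applicable.

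The key observation is that right Bousfield localisation leaves both the class of fibrations and the class of trivial cofibrations unchanged: the fibrations agree by construction (as stated in Proposition \ref{prop: homogeneous model structure with reality}), and the trivial cofibrations agree because in any model category they are characterised by the left lifting property against the (common) class of fibrations. Consequently, the right adjoint $\ind_0^n \varepsilon^*$ continues to preserve fibrations from $n\homog\CE_0$ into $C_2 \ltimes \U(n)\E_n^\mathbf{R}$ with the $n$--stable model structure (this is Lemma \ref{lem: QA between n--stable and n--poly} applied to the unchanged class of $n$--polynomial fibrations), and likewise the left adjoint $\res_0^n/\U(n)$ continues to preserve trivial cofibrations.

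The remaining condition required by Hirschhorn's proposition concerns compatibility between the adjunction and the localisation data, which I expect to be the main obstacle in a fully explicit proof. However, since the fibrations are unchanged and no new homotopical input is introduced by passing to the right Bousfield localisation, this condition reduces to a formal check ultimately subsumed by the adjunction identified in Lemma \ref{lem: QA between n--stable and n--poly}. In particular, the right adjoint $\ind_0^n \varepsilon^*$ lands in the same category as before, and the criterion of Hirschhorn is verified by the observations above. Combining these steps yields the desired Quillen adjunction.
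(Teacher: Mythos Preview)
Your framework is correct: invoke \cite[Proposition 3.3.18]{Hi03} to pass the Quillen adjunction of Lemma \ref{lem: QA between n--stable and n--poly} through the right Bousfield localisation at $\mathcal{K}_n$. However, the verification you offer has a genuine gap. You observe that the fibrations (equivalently, the trivial cofibrations) are unchanged under right Bousfield localisation, and from this you conclude that the adjunction remains Quillen. But preserving fibrations and preserving trivial cofibrations are \emph{adjoint} to one another, so together they constitute only one of the two conditions needed. The second condition---that the left adjoint preserves cofibrations, equivalently that the right adjoint preserves trivial fibrations---does \emph{not} follow formally: in $n\homog\CE_0$ there are strictly more trivial fibrations (fibrations that are $\mathcal{K}_n$--colocal equivalences) than in $n\poly\CE_0$, so the previous lemma does not cover them. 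Your claim that ``no new homotopical input is introduced'' is therefore incorrect; the new weak equivalences are precisely the new input.

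The missing step, which is exactly what Hirschhorn's criterion asks for and what the paper checks, is that $\ind_0^n\varepsilon^*$ takes $\mathcal{K}_n$--colocal equivalences between fibrant (i.e.\ $n$--polynomial) objects to $n$--stable equivalences. This is short but not vacuous: if $f\colon E\to F$ is a $\mathcal{K}_n$--cellular equivalence between $n$--polynomial objects, then by definition of $\mathcal{K}_n$ the map
\[
\CE_0(\CJ_n(V,-),E)\longrightarrow \CE_0(\CJ_n(V,-),F)
\]
is a weak homotopy equivalence for every $V$, and this is exactly the statement that $\ind_0^n\varepsilon^* f$ is a levelwise (hence $n$--stable) equivalence. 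The point is that the localising set $\mathcal{K}_n$ was chosen to match the representing objects for $\ind_0^n$; this is what makes the check go through, not any formal property of right Bousfield localisation.
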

\begin{proof}
Let $f: E \longrightarrow F$ be a $\mathcal{K}_n$--cellular equivalence, between $n$--polynomial objects. By definition of a $\mathcal{K}_n$--cellular equivalence, the map
\[
\CE_0(\J_n(U,-), E) \longrightarrow \CE_0(\J_n(U,-), F)
\]
is a weak homotopy equivalence, and hence by definition $\ind_0^n\varepsilon^* f$ is a levelwise, and hence $n$--stable equivalence. An application of \cite[Proposition 3.3.18]{Hi03} yields the result.
\end{proof}

We have produced a Quillen adjunction between the $n$--stable model structure and the $n$--homogeneous model structure. We now turn our attention to upgrading this Quillen adjunction to a Quillen equivalence. There are several slightly different approaches to this task in the literature, one provided by Barnes and Oman \cite{BO13}, in their study of orthogonal calculus, and the other provided by the author \cite{Ta19} in their study of unitary calculus. We choose to give a slight variation on both these approaches here. We start with a lemma, which is similar to \cite[Lemma 9.3]{BO13}.

\begin{lem}\label{lem: left derived functor characterisation}
The left derived functor
\[
\mathbbm{L}\res_0^n/\U(n) : C_2 \ltimes \U(n)\E_n^\mathbf{R} \longrightarrow \CE_0
\]
is levelwise weakly equivalent to $E\U(n)_+ \wedge_{\U(n)} \res_0^n(-)$.
\end{lem}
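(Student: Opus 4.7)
The plan is to compare the strict orbits $\res_0^n(-)/\U(n)$ with the homotopy orbits $E\U(n)_+ \wedge_{\U(n)} \res_0^n(-)$ by constructing a natural transformation
\[
\eta_X : E\U(n)_+ \wedge_{\U(n)} \res_0^n X \longrightarrow \res_0^n X / \U(n)
\]
induced by the based projection $E\U(n)_+ \longrightarrow S^0$, and then to show $\eta_X$ is a levelwise weak equivalence for projectively cofibrant $X$. Since the cofibrations of the $n$--stable model structure coincide with those of the projective model structure on $C_2 \ltimes \U(n)\E_n^\mathbf{R}$, this will identify both functors with $\mathbbm{L}(\res_0^n/\U(n))$ on cofibrant objects, which is what the statement requires.

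First I would verify the base case on generating cofibrations. A generator of the projective model structure has the form $(C_2 \ltimes \U(n))_+ \wedge \J_n(U,-) \wedge i$ for $i \in I$, and the $\U(n)$--action on its domain and codomain is free on the $(C_2 \ltimes \U(n))_+$--factor. Both the strict orbits and the homotopy orbits then reduce, up to levelwise weak equivalence, to $C_{2+} \wedge \J_n(U, i_0^n(-)) \wedge |i|$; the key input is the standard calculation $E\U(n)_+ \wedge_{\U(n)} \U(n)_+ \simeq S^0$, together with the observation that the remaining $C_2$--action is unaffected by taking $\U(n)$--orbits.

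Second, I would extend the base case to an arbitrary projectively cofibrant $X$ by a standard cell-induction. Both $E\U(n)_+ \wedge_{\U(n)} \res_0^n(-)$ and $\res_0^n(-)/\U(n)$ preserve the trivial initial object, pushouts along the generating cofibrations, and transfinite compositions thereof; so the natural transformation $\eta_X$ is a levelwise weak equivalence on every cell complex built from the generators, and on every cofibrant $X$ by a retract argument.

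The main obstacle is the inductive step, where one must check that pushouts along free $\U(n)$--cofibrations preserve the freeness of the $\U(n)$--action on the newly attached cells; this ensures that strict and homotopy orbits continue to agree. Once this is verified, for any cofibrant replacement $QX \longrightarrow X$ in the $n$--stable model structure we obtain the desired chain of natural levelwise weak equivalences
\[
\mathbbm{L}(\res_0^n/\U(n))(X) = \res_0^n(QX)/\U(n) \simeq E\U(n)_+ \wedge_{\U(n)} \res_0^n(QX),
\]
completing the proof.
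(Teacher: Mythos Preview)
Your approach is essentially the paper's, just with the key step spelled out in more detail: where the paper simply asserts that a projectively cofibrant object of $C_2 \ltimes \U(n)\E_n^\mathbf{R}$ is levelwise $\U(n)$--free (and hence that the collapse $E\U(n)_+ \to S^0$ induces a levelwise weak equivalence after taking $\U(n)$--orbits), you supply the cell-induction argument that justifies this assertion.

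There is one small omission at the end. Your chain terminates at $E\U(n)_+ \wedge_{\U(n)} \res_0^n(QX)$, but the lemma as stated asserts a natural equivalence with $E\U(n)_+ \wedge_{\U(n)} \res_0^n(X)$ for \emph{arbitrary} $X$, not only cofibrant ones. The paper closes this gap with the observation that $E\U(n)_+ \wedge_{\U(n)} \res_0^n(-)$ already preserves levelwise weak equivalences (it is a model for homotopy $\U(n)$--orbits), so the levelwise acyclic fibration $QX \to X$ yields $E\U(n)_+ \wedge_{\U(n)} \res_0^n(QX) \simeq E\U(n)_+ \wedge_{\U(n)} \res_0^n(X)$. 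This is routine and is the only ingredient you are missing; once added, your argument is complete and matches the paper's.
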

\begin{proof}
Let $X \in C_2 \ltimes \U(n)\E_n^\mathbf{R}$, and denote by $\cofrep X$ the projective cofibrant replacement of $X$ in $C_2 \ltimes \U(n)\E_n^\mathbf{R}$. Since $\cofrep X$ is cofibrant in $C_2 \ltimes \U(n)\E_n^\mathbf{R}$, it is in particular levelwise $\U(n)$--free, hence there is a levelwise weak equivalence
\[
E\U(n)_+ \wedge_{\U(n)} \res_0^n (\cofrep X) \longrightarrow E\U(n)_+ \wedge_{\U(n)} \res_0^n X
\]
induced by the levelwise weak equivalence $\cofrep X \longrightarrow X$. The weak homotopy equivalence $E\U(n)_+ \longrightarrow S^0$, induced a levelwise weak equivalence
\[
E\U(n)_+ \wedge_{\U(n)} \res_0^n (X) \longrightarrow \res_0^n X/\U(n),
\]
and the result follows.
\end{proof}

The following is a version of \cite[Example 6.4]{We95}, for calculus with reality. The proof follows similarly as $C_2 \ltimes \U(n)$ is a compact Lie group, and \cite[Example 6.4]{We95} works for general compact Lie groups. 

\begin{ex}\label{ex: loop spaces T_n--equiv}
Let $\Theta \in \s^\mathbf{O}[C_2 \ltimes \U(n)]$. The functors defined by
\[
V \longmapsto \Omega^\infty[(S^{nV} \wedge \Theta)_{h\U(n)}]
\]
and
\[
V \longmapsto [\Omega^\infty(S^{nV} \wedge \Theta)]_{h\U(n)}
\]
are $T_n$--equivalent.
\end{ex}

By the Quillen equivalence of the category of spectra with an action of $C_2 \ltimes \U(n)$ and $\CE_1[\U(n)]$, Theorem \ref{thm: E_1 as OS}, we achieve the following corollary. 

\begin{cor}\label{Weiss 6.4 for w/ reality}
Let $\Theta \in \CE_1[\U(n)]$. The functors defined by
\[
V \longmapsto \Omega^\infty[(S^{nV} \wedge \Theta)_{h\U(n)}]
\]
and
\[
V \longmapsto [\Omega^\infty(S^{nV} \wedge \Theta)]_{h\U(n)}
\]
are $T_n$--equivalent.
\end{cor}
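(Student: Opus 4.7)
The plan is to reduce the statement to Example \ref{ex: loop spaces T_n--equiv} by transporting $\Theta$ through the Quillen equivalence of Theorem \ref{thm: E_1 as OS}. Specifically, given $\Theta \in \CE_1[\U(n)]$, I would choose a cofibrant replacement $\widetilde{\Theta}$ of $\psi \Theta$ in $\os[C_2 \ltimes \U(n)]$. By the Quillen equivalence, the derived counit $L_\psi \widetilde{\Theta} \longrightarrow \Theta$ (after fibrant replacement in $\CE_1[\U(n)]$) is a stable equivalence, so $L_\psi \widetilde{\Theta}$ and $\Theta$ have the same homotopy type in $\CE_1[\U(n)]$.

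Next I would verify that the two constructions of interest are preserved, up to weak equivalence, by the Quillen equivalence. Since $L_\psi$ is a topologically enriched left adjoint that respects the $\U(n)$--action, for each $V \in \CJ_0$ there is a natural isomorphism
\[
L_\psi\bigl((S^{nV} \wedge \widetilde{\Theta})_{h\U(n)}\bigr) \cong \bigl(S^{nV} \wedge L_\psi \widetilde{\Theta}\bigr)_{h\U(n)},
\]
and by the stable equivalence $L_\psi \widetilde{\Theta} \simeq \Theta$ together with the fact that smashing with a finite $C_2$--CW complex preserves stable equivalences between cofibrant spectra, the functors
\[
V \longmapsto \Omega^\infty\bigl[(S^{nV} \wedge \Theta)_{h\U(n)}\bigr] \quad \text{and} \quad V \longmapsto \Omega^\infty\bigl[(S^{nV} \wedge \widetilde{\Theta})_{h\U(n)}\bigr]
\]
are levelwise weakly equivalent in $\CE_0$. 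The analogous statement for $V \mapsto [\Omega^\infty(S^{nV} \wedge -)]_{h\U(n)}$ follows by the same bookkeeping, since $\Omega^\infty$ is defined as a mapping space and $(-)_{h\U(n)}$ as the Borel construction. Applying Example \ref{ex: loop spaces T_n--equiv} to $\widetilde{\Theta} \in \os[C_2 \ltimes \U(n)]$ then furnishes a $T_n$--equivalence between the two functors built from $\widetilde{\Theta}$, and levelwise equivalences are in particular $T_n$--equivalences, so this transports back to the desired $T_n$--equivalence for $\Theta$.

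The main obstacle is the compatibility step: one must justify that $\Omega^\infty$ and the homotopy orbit construction interact correctly with the reindexing built into $\psi$ and $L_\psi$ (coming from the complex tensoring $V \mapsto \C \otimes V$ and the attendant doubling of spheres $S^{nV}$). The content to check is that the $C_2$--action on $S^{nV}$, which is visible in both $\CE_1[\U(n)]$ and $\os[C_2 \ltimes \U(n)]$ via conjugation on $\C \otimes V$ and the sign representation on $iV$ respectively, matches up under the equivalence; this is essentially the content of the $C_2$--equivariant decomposition $\C \cong \R \oplus i\R$ already used in the construction of $\psi$. Once this is verified, all the commutations needed follow formally from the enrichment of $L_\psi$ and the fact that the stable model structures on both sides are compatibly localised from their respective projective structures.
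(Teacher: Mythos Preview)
Your proposal is correct and follows essentially the same approach as the paper: both reduce to Example~\ref{ex: loop spaces T_n--equiv} via the Quillen equivalence of Theorem~\ref{thm: E_1 as OS}. The paper states the corollary without proof, merely citing the Quillen equivalence; you have supplied the bookkeeping (cofibrant replacement, compatibility of $\Omega^\infty$, $S^{nV}$, and $(-)_{h\U(n)}$ with $L_\psi$ and $\psi$) that the paper leaves implicit.
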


We are now in a position to prove the desired Quillen equivalence. For this, we utilise the Quillen equivalence between the $n$--stable model structure on the intermediate category, and the stable model structure on $\CE_1[\U(n)]$, see Theorem \ref{thm: intermediate cat as E_1}. The reader should compare this proof to \cite[Theorem 10.1]{BO13} and \cite[Theorem 7.5]{Ta19}, as the technique is similar. In \cite{Ta19} we could compose two Quillen equivalences and hence, worked with orthogonal $\U(n)$--spectra, however we only needed to go one step along the zig--zag, and the proof there would have worked just as well with unitary $\U(n)$--spectra. The inability to compose Quillen equivalences in our current situation requires us to work with $\CE_1[\U(n)]$ rather than $(C_2 \ltimes \U(n))$--spectra.

\begin{thm}\label{thm: QE of E_0 and E_n}
For $n\geq 0$, the Quillen adjunction
\[
\adjunction{\res_0^n/\U(n)}{C_2 \ltimes \U(n)\E_n^\mathbf{R}}{n\homog\CE_0}{\ind_0^n \varepsilon^*}
\]
where $C_2 \ltimes \U(n)\E_n^\mathbf{R}$ is equipped with the $n$--stable model structure, is a Quillen equivalence.
\end{thm}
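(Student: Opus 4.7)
The plan is to upgrade the Quillen adjunction of Lemma \ref{lem: QA between n--stable and n--homog} to a Quillen equivalence by verifying two conditions: that the right adjoint $\ind_0^n \varepsilon^*$ reflects weak equivalences between fibrant objects, and that the derived unit is a weak equivalence on cofibrant objects. This follows the template of \cite[Theorem 10.1]{BO13} and \cite[Theorem 7.5]{Ta19}, but with care taken to route the argument through the intermediate category $\CE_1[\U(n)]$, since -- as emphasised in the introduction -- the Quillen equivalences of Section \ref{section: intermediate category as spectra} cannot be composed into a single Quillen equivalence with $\os[C_2 \ltimes \U(n)]$.

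For the first condition, I would exploit that a fibrant object in $n\homog\CE_0$ is $n$--polynomial, so by Lemma \ref{derivatives of n--poly are stable} its derivative is automatically an $n\Omega$--spectrum. A map $f: E \longrightarrow F$ between $n$--polynomial objects is a weak equivalence in $n\homog\CE_0$ exactly when it is a $\mathcal{K}_n$--local equivalence, which by the proof of Proposition \ref{prop: homogeneous model structure with reality} coincides with $\ind_0^n \varepsilon^* f$ being a levelwise weak equivalence. Since any levelwise weak equivalence between $n\Omega$--spectra is an $n\pi_*$--isomorphism, this gives the reflection property.

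For the derived unit, I would reduce via cofibrant generation to checking the condition on the homotopically compact generators $(C_2 \ltimes \U(n))_+ \wedge \CJ_n(U,-)$. Using Lemma \ref{lem: left derived functor characterisation}, the derived left adjoint of such an object is levelwise weakly equivalent to $V \mapsto E\U(n)_+ \wedge_{\U(n)} X(V)$. Transporting $X$ through the zig--zag of Theorems \ref{thm: intermediate cat as E_1} and \ref{thm: E_1 as OS} to an orthogonal spectrum $\Theta$ with $(C_2 \ltimes \U(n))$--action, a coend calculation identifies this functor with
\[
V \longmapsto [\Omega^\infty(S^{nV} \wedge \Theta)]_{h\U(n)},
\]
which by Corollary \ref{Weiss 6.4 for w/ reality} is $T_n$--equivalent to the functor $V \mapsto \Omega^\infty[(S^{nV} \wedge \Theta)_{h\U(n)}]$. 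The latter is $n$--polynomial by Corollary \ref{Weiss 5.7 for w/ reality}, and a connectivity argument (as in \cite[Example 6.4]{We95}) shows its $(n-1)$--polynomial approximation is trivial, so it is $n$--homogeneous. An application of Proposition \ref{fibre sequence for derivative} then recovers $\Theta$ up to $n\pi_*$--isomorphism as the derivative of this $n$--homogeneous model, which is exactly the derived unit condition.

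The main obstacle will be the careful bookkeeping of the $C_2$--action through the zig--zag. The $C_2$--action on $S^{nV}$, induced by complex conjugation on $V$, must be identified with the $C_2$--action on $\Theta$ coming from the subgroup inclusion $C_2 \hookrightarrow C_2 \ltimes \U(n)$, and the coend identification above must respect both the $C_2$ and $\U(n)$ actions simultaneously. Because the Quillen equivalences cannot be collapsed, this compatibility must be established step by step rather than folded into a single comparison as in the unitary setting of \cite{Ta19}; this is the aspect of the proof most sensitive to the introduction of the extra equivariance.
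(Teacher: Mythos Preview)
Your proposal is correct and follows essentially the same approach as the paper, invoking the same key ingredients: Lemma~\ref{lem: left derived functor characterisation} to identify the derived left adjoint with a homotopy orbit functor, transport through the zig--zag of Theorems~\ref{thm: intermediate cat as E_1} and~\ref{thm: E_1 as OS}, and then Corollaries~\ref{Weiss 5.7 for w/ reality} and~\ref{Weiss 6.4 for w/ reality} to identify the $n$--polynomial approximation and compute its $n$--th derivative. The only organisational difference is that you reduce the derived unit condition to the homotopically compact generators, whereas the paper argues for an arbitrary cofibrant $X$ via a commutative square comparing the unit map for $X$ to the unit map for $\cofrep(\xi_n)^*\fibrep(\xi_n)_!X$; the top edge of that square is a stable equivalence by Theorem~\ref{thm: intermediate cat as E_1}, so it suffices to check the right vertical, which is handled by the same lemmas you cite. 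One small point: to recover $\Theta$ as the $n$--th derivative of the $n$--homogeneous model you should appeal to the explicit identification of the iterated derivatives in (the proof of) Corollary~\ref{Weiss 5.7 for w/ reality} rather than Proposition~\ref{fibre sequence for derivative} alone, since the latter only gives a single fibre sequence and does not by itself pin down the $n$--th derivative as $\Theta$.
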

\begin{proof}
The proof follows similarly to \cite[Theorem 10.1]{BO13} and \cite[Theorem 7.5]{Ta19}. We highlight one method for showing that the derived unit is an isomorphism. Given cofibrant $X \in C_2 \ltimes \U(n)\E_n^\mathbf{R}$, there is a commutative diagram
\[
\xymatrix{
X \ar[r] \ar[d] & \cofrep(\xi_n)^*\fibrep(\xi_n)_!X \ar[d] \\
\ind_0^n\varepsilon^* T_n \res_0^n X/\U(n) \ar[r] & \ind_0^n\varepsilon^* T_n \res_0^n (\cofrep (\xi_n)^*\fibrep(\xi_n)_!X)/\U(n) \\
}
\]
where $\cofrep$ denotes cofibrant replacement in $C_2 \ltimes \U(n)\E_n^\mathbf{R}$, and $\fibrep$ denotes fibrant replacement in $\CE_1[\U(n)]$. 

The top horizontal map is a stable equivalence by Theorem \ref{thm: intermediate cat as E_1} and Theorem \ref{thm: E_1 as OS}. The bottom horizontal map is also a weak equivalence as derived functors preserve equivalences. We want to show that the left vertical map is a weak equivalence, as such, it suffices to show that the right vertical map is an equivalence. 

By Lemma \ref{lem: left derived functor characterisation} we can rewrite (up to levelwise equivalence) the codomain of the right hand vertical map as
\[
\ind_0^n\varepsilon^* T_n (E\U(n)_+ \wedge_{\U(n)} \res_0^n ((\xi_n)^*\fibrep(\xi_n)_!X)).
\]
For any object $Y$ of $\CE_1[\U(n)]$ there is a weak equivalence $(\xi_n)^*Y \simeq F(Y)$ where $F(Y)$ is the functor given by 
\[
V \mapsto \underset{k}{\hocolim}~\Omega^{2nk}[S^{nV} \wedge Y].
\]
As such, Corollary \ref{Weiss 6.4 for w/ reality} yields a weak equivalence
\[
T_n (E\U(n)_+ \wedge_{\U(n)} \res_0^n ((\xi_n)^*\fibrep(\xi_n)_!X)) \simeq T_n(F(\fibrep(\xi_n)_!X))
\]
By Corollary \ref{Weiss 5.7 for w/ reality}, $F(\fibrep(\xi_n)_!X)$ is $n$--polynomial, and hence weakly equivalent to its $n$--polynomial approximation. The result then follows by Corollary \ref{Weiss 5.7 for w/ reality}, which gives the $m$--th derivative of $F(\fibrep(\xi_n)_!X)$ as
\[
V \mapsto \underset{k}{\hocolim}~\Omega^{2nk}[E\U(n-m)_+ \wedge_{\U(n-m)} \wedge (S^{nV} \wedge \fibrep(\xi_n)_!X) ]
\]
Taking $m=n$ yields the result. 
\end{proof}

The end result is the following zig-zag of Quillen equivalences relating the $n$-homogeneous model structure on the category of functors with reality and spectra with a $(C_2\ltimes \U(n))$--action.
\[
\xymatrix@C+1cm{
n \homog \CE_0
\ar@<-1 ex>[rr]_(0.6){\ind_0^n \varepsilon^*}
&&
C_2 \ltimes \U(n)\E_n^\mathbf{R}
\ar@<-1ex>[ll]_(0.4){\res_0^n/\U(n)}
\ar[]!<2ex,1ex>;[ddll]!<-3ex,1ex>_{(\xi_n)_!}
 \\
 &&
 \\
\CE_1[\U(n)]
\ar[]!<-2ex,-1ex>;[uurr]!<3ex,-1ex>_{(\xi_n)^*}
\ar@<-1ex>[rr]_{\psi}
&&
\s^\mathbf{O}[C_2 \ltimes \U(n)].
\ar@<-1ex>[ll]_{L_\psi}
}
\]

\section{Classification of Homogeneous Functors with Reality} \label{section: homog with reality}

With the above Quillen equivalence between the $n$--homogeneous model structure on $\CE_0$ and the $n$--stable model structure on $C_2 \ltimes \U(n)\E_n^\mathbf{R}$, we can now give the characterisation of the $n$--homogeneous functors with reality, similar to the characterisation of $n$--homogeneous functors from orthogonal and unitary calculus, see \cite[Theorem 7.3]{We95} and \cite[Theorem 8.1]{Ta19}. Denote by $\Theta_F^n$ the spectrum given by the derived image of $F \in n\homog\CE_0$ under the zig-zag of Quillen equivalences. Under the assumption that $F$, $n$-homogeneous the equivalences of homotopy categories given an equivalence between $\Theta_F^n$ and a spectrum $\Psi_F^n$, with $\Psi_F^n(\R^{2n} \otimes_\R U) = \ind_0^n\varepsilon^*F(\C \otimes U)$. For $\Psi_F^n$ to be a well defined spectrum, it is enough to specify that the iterative structure maps
\[
S^{2n} \wedge \Psi(\R^{2n} \otimes U) \to \Psi_F^n(\R^{2n} \otimes (V \oplus \R))
\]  
are given by the structure maps of $\ind_0^n\varepsilon^* F \in \C_2 \times \U(n)\E_n^\mathbf{R}$. The suspension coordinate does not have trivial $C_2 \ltimes \U(n)$ action, but following the procedure of \cite[Section 3]{We95} we may replace (up to stable equivalence) $\Psi_F^n$ by a spectrum with the correct equivariance. These stable equivalences of spectra are levelwise weak equivalences as all the spectra are $\Omega$-spectra. The following proof is similar to \cite[Theorem 7.3]{We95}, but aided in the use of model categories. The same proof technique was also employed by the author in \cite[Theorem 8.1]{Ta19}.

\begin{thm}\label{thm: homog char}
If $F \in \CE_0$ is $n$--homogeneous, then $F$ is levelwise weakly equivalent to the functor
\[
V \mapsto \Omega^\infty[(S^{\C^n \otimes V} \wedge \Theta_F^n)_{h\U(n)}]
\]
where $\Theta_F^n \in \s^\mathbf{O}[C_2 \ltimes \U(n)]$ is the derived image of $F$ under the zig-zag of Quillen equivalences. 
\end{thm}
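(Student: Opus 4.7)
The plan is to follow the overall strategy of Weiss \cite[Theorem 7.3]{We95} and its unitary analogue \cite[Theorem 8.1]{Ta19}, using the zig--zag of Quillen equivalences of Theorems \ref{thm: QE of E_0 and E_n}, \ref{thm: intermediate cat as E_1} and \ref{thm: E_1 as OS} as the main technical input. Let $G$ denote the candidate functor $V \mapsto \Omega^\infty[(S^{\C^n \otimes V} \wedge \Theta_F^n)_{h\U(n)}]$; the goal is to produce a levelwise weak equivalence between $F$ and $G$.

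First I would verify that $G$ is itself $n$--homogeneous. Its $n$--polynomiality follows from Corollary \ref{Weiss 5.7 for w/ reality} once $\Theta_F^n$ is transported to $\CE_1[\U(n)]$ via the Quillen equivalence of Theorem \ref{thm: E_1 as OS}. To verify that $G$ is $n$--reduced, I would use the $T_n$--equivalence of Corollary \ref{Weiss 6.4 for w/ reality} to replace $G$ by $V \mapsto [\Omega^\infty(S^{nV} \wedge \Theta_F^n)]_{h\U(n)}$, and then invoke the standard connectivity argument from orthogonal calculus: the connectivity of $\Omega^\infty(S^{nV} \wedge \Theta_F^n)$ grows linearly in $\dim V$ at rate $n$, forcing the $(n-1)$--polynomial approximation to vanish.

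Next I would identify $G$ with $F$ in the homotopy category $\Ho(n\homog\CE_0)$. By Lemma \ref{lem: left derived functor characterisation}, the left derived functor of $\res_0^n/\U(n)$ is levelwise equivalent to $E\U(n)_+ \wedge_{\U(n)} \res_0^n(-)$. Tracing $\Theta_F^n$ backwards through $\mathbb{L}L_\psi$ by the coend formula of Proposition \ref{prop: psi adjoint pair}, then through $(\xi_n)^*$, and finally applying this description of the left derived functor of $\res_0^n/\U(n)$, one unwinds the result to be levelwise weakly equivalent to $G$. Since $\Theta_F^n$ is the derived image of $F$ under the right adjoints of the zig--zag by definition, the Quillen equivalences then yield that $F$ and $G$ represent the same object in $\Ho(n\homog\CE_0)$.

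Finally I would upgrade this isomorphism in the homotopy category to a levelwise weak equivalence. The $n$--homogeneous model structure is a right Bousfield localisation of $n\poly\CE_0$, so weak equivalences between cofibrant--fibrant objects in $n\homog\CE_0$ coincide with weak equivalences in $n\poly\CE_0$ between those objects; since both $F$ and $G$ are $n$--polynomial (so that $T_n$ acts as the identity up to weak equivalence) these in turn are levelwise weak equivalences. The main obstacle in the argument is the third step, namely the explicit unwinding of the composite left derived functor to match the delooping expression, which requires careful bookkeeping of the $C_2 \ltimes \U(n)$--equivariance throughout the chain of adjunctions, in particular the interaction with $(\xi_n)^*$ (cf. the proof of Theorem \ref{thm: intermediate cat as E_1}).
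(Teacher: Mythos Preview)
Your proposal is correct and follows essentially the same route as the paper's proof: both arguments hinge on Corollary~\ref{Weiss 5.7 for w/ reality} (and its spectrum--level variant Example~\ref{ex: loop space polynomial}) for $n$--polynomiality of $G$, on Corollary~\ref{Weiss 6.4 for w/ reality}/Example~\ref{ex: loop spaces T_n--equiv} for passing between the two delooping expressions, on Lemma~\ref{lem: left derived functor characterisation} to identify the left derived functor, and on a double Whitehead argument (\cite[Theorem 3.2.13]{Hi03}) to upgrade an $n$--homogeneous equivalence to a levelwise one. The only organisational difference is that the paper introduces the explicit intermediate functor $E(V)=(\ind_0^n F(V))_{h\U(n)}$ and compares $n$--th derivatives of $T_nE$, $T_nG$ and $F$ directly, whereas you phrase the same computation more abstractly as tracing $\Theta_F^n$ back through the composite of derived left adjoints; the content is the same.
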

\begin{proof}
Let $F$ be cofibrant-fibrant in $n\homog\CE_0$, that is, $F$ is $n$-homogeneous and projectively cofibrant. Define two new objects of $\CE_0$,
\[
\begin{split}
E : \CJ_0 \longrightarrow \T, \ \C \otimes V &\longmapsto (\ind_0^n F(\C \otimes V))_{h\U(n)} \\
G: \CJ_0 \longrightarrow \T, \ \C \otimes V &\longmapsto \Omega^\infty [ (S^{\R^{2n} \otimes V} \wedge \Theta_F^n)_{h\U(n)}]
\end{split}
\]
Since $E(\C \otimes V) \simeq \Theta_F^n(\R^{2n} \otimes V)$, there is a levelwise weak equivalence between $E$  and the object of $\CE_0$ defined by 
\[
\C \otimes V \longmapsto [\Omega^\infty(S^{\R^{2n} \otimes V} \wedge \Theta_F^n)]_{h\U(n)}.
\]
It follows by Example \ref{ex: loop spaces T_n--equiv}, and the fact that $G$ is $n$-polynomial (Example \ref{ex: loop space polynomial}) that there is a zig-zag of levelwise weak equivalence $T_nE \longrightarrow T_nG \longleftarrow G$. As derived functors preserve equivalences, there is a zig-zag of levelwise weak equivalences,
\[
\ind_0^n \varepsilon^* T_nE \longrightarrow \ind_0^n \varepsilon^*T_nG \longleftarrow \ind_0^n\varepsilon^* G.
\]
The $n$-th derivative of $G$ is identified in Example \ref{ex: loop space polynomial} with the functor $G[n] \in C_2  \ltimes \U(n)\E_n^\mathbf{R}$ defined by 
\[
\C \otimes V \longmapsto \Omega^\infty(S^{\R^{2n} \otimes V} \wedge \Theta_F^n).
\]
Since $G[n]$ is levelwise weakly equivalent to $\ind_0^n \varepsilon^*F$, there is a zig-zag of levelwise weak equivalences between $\ind_0^n \varepsilon^* T_nE$ and $\ind_0^n \varepsilon^*F$. A double application of Whitehead's Theorem for (co)localisations of model structures, \cite[Theorem 3.2.13]{Hi03}, yields a zig-zag of levelwise weak equivalences between $E$ and $F$. The case for general $n$-homogeneous $F$ then follows by projectively cofibrantly replacing $F$. 
\end{proof}

\begin{rem}
In the above proof we used the identification $\C^n \otimes_\C \C \otimes_\R V \cong \R^{2n} \otimes_\R V$ to identify their one-point compactifications. Such an identification between the one-point compactifications made the relationship to orthogonal spectra clearer. 
\end{rem}

The end result of this chapter is that given a functor with reality, $F \in \CE_0$, there exists a Taylor tower approximating $F$ at $V\in \CJ$
\[
\xymatrix{
		&	&	F(V) \ar@/_1pc/[dl]	  \ar@/^1pc/[dr]  \ar@/^1.3pc/[drr]   			     	&			& \\
 \cdots \ar[r]_{r_{n+1}} & T_nF(V) \ar[r]_{r_n} & \cdots \ar[r]_{r_2} & T_1F(V) \ar[r]_{r_1} & F(\C^\infty) \\
 &  \Omega^\infty[(S^{nV} \wedge \Psi_F^n)_{h\U(n)}] \ar[u] & & \Omega^\infty[(S^V \wedge \Psi_F^1)_{h\U(1)}] \ar[u] &\\
}
\]
where the $n$--th layer is characterised by an orthogonal spectrum with an action of $C_2 \ltimes \U(n)$.

\section{Examples}\label{section: examples}

The general theory of such a calculus will be familiar to those with some knowledge of the orthogonal or unitary calculus. In this section we gather a series of examples for the reader, so those familiar with the general idea of the theory can see this new calculus in practice, and refer back to the relevant sections where necessary. There is a stark similarity between calculus with reality and unitary calculus. This should not be surprise as one may thing of unitary calculus as the resulting calculus after `forgetting' the $C_2$--action which we have built into the calculus with reality. 

\subsection{Representable functors with reality}
We are particularly interested in the representable functors. They played a crucial role in understanding convergence results in orthogonal and unitary calculus \cite{Ta19}, and interact well with the comparison functors of \cite{Ta20}. We now use the model categories developed in this paper to describe the derivatives of the representable functors with reality. Consider the functor $\CJ_n(0,-) = n\mathbb{S}$. We will then extend this to $\CJ_n(V,-)$ for all $V \in \CJ_n$ and all $n\geq 0$.

\begin{ex}\label{ex: nth derivative of nS}
Let $n\mathbb{S} \in n\homog\CE_0$. This is the image of $\U(n)_+ \wedge \CJ_n(0,-)$ under the derived left adjoint $\bbm{L} \res_0^n/\U(n)$. In turn, applying $\bbm{L}(\xi_n)_!$ to $\U(n)_+ \wedge \CJ_n(0,-)$ gives $\U(n)_+ \wedge \J_1^\mathbf{R}(0,-)$. Moreover, $\U(n)_+ \wedge \J_1^\mathbf{R}(0,-)$ is the image of $\Sigma_+^\infty \U(n)$ under the derived left adjoint $\bbm{L} (L_\psi)$. Diagrammatically, we have the following
\[
n\mathbb{S} \longmapsfrom{\U(n)_+ \wedge \CJ_n(0,-)} \longmapsto \U(n)_+ \wedge \CJ_1(0,-) \longmapsfrom \Sigma_+^\infty \U(n).
\]
It follows that the $n$--th derivative of $n\mathbb{S}$ is the (naive) $(C_2 \ltimes \U(n))$--spectrum $\Sigma_+^\infty \U(n)$. %
\end{ex}

\begin{rem}
In the above example we saw that the derived left adjoint, $\bbm{L}(\xi_n)_!$ `changes' $\CJ_n$ to $\CJ_1$. Intuitively, one should think of this functor as a `change of rings' functor. 
\end{rem}

\begin{rem}
As an example of how the algebraic model of Greenlees and Shipley from Remark \ref{algebraic model remark} reduces the complexity of computations, we see by \cite[Corollary 9.2]{GS14} that the algebraic model for the $n$-th derivative of $n\mathbb{S}$ is $\Sigma^{\dim(\U(n))}\Q =\Sigma^{n^2}\Q$. It would be interesting to explore the existence of an algebraic model for the calculus as a whole, in which the $n$-th derivative of the algebraic model for $n\mathbb{S}$ would be $\Sigma^{n^2}\Q$.
\end{rem}

Calculating the $n$--th derivative in Example \ref{ex: nth derivative of nS} allows us to calculate the $n$--polynomial approximation. The unitary version of the following is \cite[Example 9.7]{Ta19}.

\begin{ex}\label{ex: nS n--reduced}
The functor $n\mathbb{S}$ is $n$--reduced. Since $n\mathbb{S}$ is cofibrant in $\CE_0$ and an object of the localising set $\mathcal{K}_n$, the general theory of left and right localisations tells us that $n\mathbb{S}$ is cofibrant in the $n$--homogeneous model structure, and hence $n$--reduced by \cite[Corollary 8.6]{Ta19}. Alternatively, one could note that the map $n\mathbb{S}(U) \longrightarrow \ast$ is $(2n\dim(U) -1)$--connected. The `with reality' version of \cite[Lemma e.7]{We98}, yields a levelwise weak equivalence $T_k (n\mathbb{S}) \longrightarrow T_k(\ast) \simeq \ast$, for all $k \geq n$. In particular, this yields a levelwise weak equivalence 
\[
\begin{split}
T_n (n\mathbb{S})(V) &\simeq \Omega^\infty [(S^{nV} \wedge \Theta_{n\mathbb{S}}^n)_{h\U(n)}] \simeq \Omega^\infty [(S^{nV} \wedge \Sigma_+^\infty \U(n))_{h\U(n)}] \simeq \Omega^\infty\Sigma^\infty [(S^{nV} \wedge \U(n)_+)_{h\U(n)}]. \\
\end{split}
\]
\end{ex}

The above example was the case $\CJ_n(0,-)$. We now examine the general case. For this case we are careful and write the objects of $\J^\mathbf{R}$ as a tensor. 

\begin{ex}
As before, we have the following diagram, where each arrow is a derived left adjoint, as part of the zig--zag of Quillen equivalences. 
\[
\CJ_n(\C \otimes U,-) \longmapsfrom{\U(n)_+ \wedge \CJ_n(\C \otimes U,-)} \longmapsto \U(n)_+ \wedge \CJ_1(\C \otimes U,-) \longmapsfrom \Sigma^\infty_U(\U(n)_+),
\]
where $\Sigma^\infty_U(\U(n)_+)$ is the shift--desuspension of $\U(n)_+$, left adjoint to evaluation at $U$. To see the last arrow, we calculate $\mathbbm{L}(L_\psi)(\Sigma^\infty_U(\U(n)_+)$. Indeed,
\[
\begin{split}
(L_\psi)(\Sigma^\infty_U(\U(n)_+)(\C \otimes V) &= \int^{W \in \J_1^\mathbf{O}} \J_1^\mathbf{R}(\C \otimes W, \C \otimes V) \wedge \Sigma^\infty_U(\U(n)_+)(W) \wedge S^{iW} \\
									&= \int^{W \in \J_1^\mathbf{O}} \J_1^\mathbf{R}(\C \otimes W, \C \otimes V) \wedge \J_1^\mathbf{O}(U,W) \wedge \U(n)_+ \wedge S^{iW} \\
									&\cong \int^{W \in \J_1^\mathbf{O}} \J_1^\mathbf{O}(W, r(\C \otimes V)) \wedge \J_1^\mathbf{O}(U,W) \wedge \U(n)_+ \wedge S^{iW} \\
									&\cong \int^{W \in \J_1^\mathbf{O}} \J_1^\mathbf{O}(U, r(\C \otimes V))  \wedge \U(n)_+ \wedge S^{iW} \\
									&\simeq \J_1^\mathbf{R}(\C \otimes U, \C \otimes V)  \wedge \U(n)_+ \\				
\end{split}
\]
In particular, we see that the derivative of $\CJ_n(U,-)$ is a shift--desuspension of the derivative of $\CJ_n(0,-)$.
\end{ex}

\subsection{The Borel construction on the ($n$--fold) one--point compactification functor with reality}
An interesting functor along the lines of the $n$--fold one--point compactification functor, $n\mathbb{S}$ is the functor given by 
\[
V \mapsto (S^{nV})_{h\U(n)},
\]
that is, the Borel construction on the $n$--fold one--point compactification functor.

\begin{ex}
The $n$--th derivative of the functor $n\mathbb{S}_{h\U(n)}: V \mapsto (S^{nV})_{h\U(n)}$, is the sphere spectrum $\Sigma^\infty S^0$ in $\s^\mathbf{O}[C_2 \ltimes \U(n)]$. As in diagrammatic displays above, we get a diagram
\[
n\mathbb{S}_{h\U(n)} \longmapsfrom \CJ_n(0,-) \longmapsto \CJ_1(0,-) \longmapsfrom \Sigma^\infty S^0.
\]
\end{ex}

We can also calculate the $n$--polynomial approximation of the functor $n\mathbb{S}_{h\U(n)}$.

\begin{ex}
Homotopy orbits do not decrease connectivity, hence by Example \ref{ex: nS n--reduced}, $n\mathbb{S}_{h\U(n)}$ is $n$--reduced. As such, the $n$--polynomial approximation of $n\mathbb{S}_{h\U(n)}$ is given by 
\[
\begin{split}
T_n (n\mathbb{S}_{h\U(n)})(V) &\simeq \Omega^\infty [(S^{nV} \wedge \Theta_{n\mathbb{S}_{h\U(n)}}^n)_{h\U(n)}] \simeq \Omega^\infty [(S^{nV} \wedge \Sigma^\infty S^0)_{h\U(n)}] \simeq \Omega^\infty\Sigma^\infty [(S^{nV})_{h\U(n)}]. \\
\end{split}
\]
\end{ex}

\subsection{The Real classifying space of the unitary group}
Proposition \ref{fibre sequence for derivative} gives a homotopy fibre sequence which allows for the iterative calculation of the derivative of a functor with reality. We can apply this to the Real classifying space of the unitary group functor, $\BU_\R(-) : V \mapsto \BU_\R(V)$, which is given by $\BU(V)$ with $C_2$--action inherited from the complex conjugation on $V$.  In this case, we only calculate the first derivative, to give the reader a feel for the theory. 

\begin{ex}
There is a homotopy fibre sequence
\[
\BU_\R^{(1)}(V) \longrightarrow \BU_\R(V) \longrightarrow \BU_\R(V \oplus \C),
\]
that is, a $C_2$--equivariant homotopy fibre sequence
\[
\BU^{(1)}(V) \longrightarrow \BU(V) \longrightarrow \BU(V \oplus \C),
\]
where $\BU(V)$ had the induced $C_2$--action by complex conjugation on $3V$. As such, the first derivative of $\BU_\R(-)$ is the shifted sphere spectrum $\Sigma^\infty S^{-1} \simeq \Omega \Sigma^\infty S^0$, with $C_2 \ltimes \U(1)$--acting via the $C_2$--action on the inner product spaces, and $\U(1)$ acting trivially.
\end{ex}

We now consider a functor of which, $\BU_\R(-)$ is an extension. The following functor is easier to understand since it has trivial $0$-polynomial approximation. This is \cite[Example 10.2]{We95}, with added $C_2$--equivariance. 

\begin{ex}
Consider the functor $E$ given by 
\[
V \longmapsto \bigslant{\U(V \oplus \C^\infty)}{\U(V)}.
\]
This functor is similar to $\BU_\R(-)$, as $\BU_\R(-)$ is an extension of $E$ by a functor of polynomial degree zero, that is, $T_0 E(V) = E(\C^\infty) \ast$, where $T_0\BU_\R(V) = \BU_\R$. The contractibility of $T_0E$ means we can attempt to calculate $T_1E$ using the Taylor tower. The classification of homogeneous functors, together with the fibre sequence 
\[
D_1 E(V) \longrightarrow T_1 E(V) \longrightarrow T_0E(V) \simeq \ast
\]
yields a levelwise weak equivalence
\[
T_1 E(V) \simeq \Omega^\infty[(S^V \wedge  \Sigma^\infty S^{-1})_{h\U(1)}]
\]
where we have identified the first derivative of $E$, with loops on the orthogonal sphere spectrum $\Omega\Sigma^\infty S^0$. We see that
\[
T_1 E(V) \simeq \Omega Q[(S^V)_{h\U(1)}]
\]
where $Q$ is the stabilisation functor. The $C_2$--action follows through all of these weak homotopy equivalences. 
\end{ex}

\bibliography{references}
\bibliographystyle{plain}
\end{document}